\newcommand\legendre[2]{\left(\frac{#1}{#2}\right)}
\newcommand\footnoteref[1]{\protected@xdef\@thefnmark{\ref{#1}}\@footnotemark}
\NewDocumentCommand{\Q}{oo}{%
	\IfValueTF{#2}%
		{\mathbb{Q}_{#1,#2}}%
		{\IfValueTF{#1}%
			{\mathbb{Q}_{\infty,#1}}%
			{\mathbb{Q}}%
		}%
}
\newcommand{\Z}{\mathbb{Z}}
\newcommand{\ZZ}{\mathbb{Z}}
\newcommand{\Qbar}{{\overline{\mathbb Q}}}
\newcommand*{\tors}{\text{tors}}
\DeclareMathOperator{\GL}{GL}
\DeclareMathOperator{\SL}{SL}
\newtheorem{tm}{Theorem}[section]
\newtheorem{proposition}[tm]{Proposition}
\newtheorem{lemma}[tm]{Lemma}
\newtheorem{corollary}[tm]{Corollary}
\theoremstyle{definition}
\theoremstyle{remark}
\newtheorem*{remark}{Remark}
\DeclareMathOperator{\@Gal}{Gal}
\NewDocumentCommand{\Gal}{oo}{%
	\IfValueTF{#2}%
		{\@Gal\left(#1/#2\right)}%
		{\IfValueTF{#1}%
			{}%
			{\@Gal}%
		}%
}
\newcommand{\Aut}{\operatorname{Aut}}
\def\diam#1{\langle#1\rangle}
\title{Torsion groups of elliptic curves over the $\Z_p$-extensions of $\Q$}
\author{Michael Chou}
\address{Dept. of Mathematics, Tufts University, Medford, MA, 02155, USA}
\email{michael.chou@tufts.edu} 
\urladdr{https://sites.tufts.edu/michaelchou/}
\author{Harris B. Daniels}
\address{Department of Mathematics and Statistics, Amherst College, Amherst, MA 01002, USA}
\email{hdaniels@amherst.edu}
\urladdr{https://hdaniels.people.amherst.edu/}
\author{Ivan Krijan}
\address{University of Zagreb, Bijeni\v{c}ka cesta 30, 10000 Zagreb, Croatia}
\email{ikrijan@math.hr}
\urladdr{http://web.math.pmf.unizg.hr/~ikrijan/}
\author{Filip Najman}
\address{University of Zagreb, Bijeni\v{c}ka cesta 30, 10000 Zagreb, Croatia}
\email{fnajman@math.hr}
\urladdr{http://web.math.pmf.unizg.hr/~fnajman/}
\thanks{\thanks{The third and fourth author were supported by the QuantiXLie Center of Excellence, a project
co-financed by the Croatian Government and European Union through the
European Regional Development Fund - the Competitiveness and Cohesion
Operational Programme (Grant KK.01.1.1.01.0004).}}
\date{\today}
\keywords{Elliptic curves, torsion}
\subjclass[2010]{11G05}
\begin{document}
\begin{abstract}We determine, for an elliptic curve $E/\Q$ and for all $p$, all the possible torsion groups \(E(\Q[p])_\tors\), where \(\Q[p]\) is the $\Z_p$-extension of $\Q$.
 \end{abstract}
\maketitle
\section{Introduction}
For a prime number \(p\), denote by \(\Q[p]\) the unique \(\Z_p\)-extension of \(\Q\), and for a positive integer \(n\), denote by \(\Q[n][p]\) the \(n\)\textsuperscript{th} layer of \(\Q[p]\), i.e.\ the unique subfield of \(\Q[p]\) such that $\Gal[\Q[n][p]][\Q]\simeq \Z/p^n\Z$.
	Recall that the \(\Z_p\)-extension of \(\Q\) is the unique Galois extension \(\Q[p]\) of $\Q$ such that
	\[\Gal[\Q[p]][\Q] \simeq \Z_p,\]
	where \(\Z_p\) is the additive group of the \(p\)-adic integers and is constructed as follows. Let
	\[G = \Gal[\Q(\zeta_{p^\infty})][\Q] = \varprojlim\limits_{n}\Gal[\Q(\zeta_{p^{n+1}})][\Q] \stackrel{\sim}{\rightarrow} \varprojlim\limits_{n}(\Z/p^{n+1}\Z)^\times = \Z_p^\times.\]
	Here we know that \(G = \Delta \times \Gamma\), where \(\Gamma \simeq \Z_p\) and \(\Delta \simeq \Z/(p-1)\Z\) for $p\geq 3$ and  \(\Delta \simeq \Z/2\Z\) (generated by complex conjugation) for $p=2$, so we define
	\[\Q[p] := \Q(\zeta_{p^\infty})^\Delta.\]
	We also see that every layer is uniquely determined by
	\[\Q[n][p] = \Q(\zeta_{p^{n+1}})^\Delta,\]
	so for $p\geq 3$ it is the unique subfield of \(\Q(\zeta_{p^{n+1}})\) of degree \(p^n\) over \(\Q\). More details and proofs of these facts about \(\Z_p\)-extensions and Iwasawa theory can be found in \cite[Chapter 13]{washington}.

Iwasawa theory for elliptic curves (see \cite{greenberg2}) studies elliptic curves in $\Z_p$-extensions, in particular the growth of the rank and $n$-Selmer groups in the layers of the $\Z_p$-extensions.

In this paper we completely solve the problem of determining how the torsion of an elliptic curve defined over $\Q$ grows in the $\Z_p$-extensions of $\Q$. These results, interesting in their own right, might also find applications in other problems in Iwasawa theory for elliptic curves and in general. For example, to show that elliptic curves over \(\Q[p]\) are modular for all $p$, Thorne \cite{thorne} needed to show that $E(\Q[p])_\tors=E(\Q)_\tors$ for two particular elliptic curves.

Our results are the following.
	\begin{tm}\label{bigpresult}
	Let \(p \geq 5\) be a prime number, and $E/\Q$ an elliptic curve. Then
	\[E(\Q[p])_\tors = E(\Q)_\tors.\]
	\end{tm}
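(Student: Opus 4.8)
The plan is to fix a prime $\ell$ and show that $E(\Q[p])[\ell^\infty] = E(\Q)[\ell^\infty]$; since torsion is the direct sum of its $\ell$-primary parts, this gives the theorem. Suppose first $\ell \neq p$ and set $F = \Q(E(\Q[p])[\ell^\infty])$. As a subfield of $\Q[p]$, the extension $F/\Q$ is pro-$p$, abelian, and unramified outside $p$, and it is contained in $\Q(E[\ell^\infty])$. The crucial point is that for $p \geq 5$ and $\ell \neq p$ the extension $\Q(E[\ell^\infty])/\Q$ is tamely ramified at $p$: this is N\'eron--Ogg--Shafarevich when $E$ has good reduction at $p$, and in general $E$ acquires semistable reduction over a tamely ramified extension of $\Q_p$ of degree prime to $p$ (this is where $p \geq 5$ enters), which reduces one to the good or multiplicative case. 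Since tame inertia is pro-prime-to-$p$, its image in the pro-$p$ group $\Gal[F][\Q]$ is trivial; thus $F/\Q$ is unramified at $p$, hence unramified everywhere, hence $F = \Q$ by Minkowski.

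Now take $\ell = p$ and consider first $E(\Q[p])[p]$. If some $P \in E(\Q[p])[p] \setminus E(\Q)$ existed, then $\Q(P)$ would be a layer $\Q[n][p]$ with $n \geq 1$, so $\Gal[\Q(E[p])][\Q]$ would have a nontrivial cyclic quotient of $p$-power order. By Dickson's classification of subgroups of $\GL_2(\F_p)$, the image of $\bar\rho_{E,p}$ then either contains $\SL_2(\F_p)$ --- impossible, since $\SL_2(\F_p)$ is perfect for $p \geq 5$ and so has no nontrivial abelian quotient --- or lies in a Borel subgroup, so $\bar\rho_{E,p}$ is reducible. Using that any character $G_\Q \to \F_p^\times$ that is trivial on $G_{\Q[p]}$ must be trivial (as $\Z_p$ is pro-$p$), one finds that a new $p$-torsion point forces $\bar\rho_{E,p} \sim \left(\begin{smallmatrix} \chi_p & c \\ 0 & 1\end{smallmatrix}\right)$, where $\chi_p$ is the mod-$p$ cyclotomic character. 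The class of $c$ lies in $H^1(\Q,\mu_p) \cong \Q^\times/(\Q^\times)^p$; if it is nonzero, the corresponding class $d \in \Q^\times/(\Q^\times)^p$ would have to become a $p$-th power in $\Q[p]$, i.e.\ $\Q(\sqrt[p]{d}) \subseteq \Q[p]$; but then, $\Q[p]/\Q$ being Galois, $\Q[p]$ would contain $\zeta_p$, contradicting $\Q[p] \cap \Q(\zeta_p) = \Q$. If $c = 0$, then $E$ has a rational point of order $p$, so $p \in \{5,7\}$ by Mazur, and a new $p$-torsion point would force $\chi_p$ to be trivial on $G_{\Q[p]}$, again contradicting $\Q[p] \cap \Q(\zeta_p) = \Q$. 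Hence $E(\Q[p])[p] = E(\Q)[p]$.

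It remains to handle higher $p$-power torsion. If $E(\Q[p])[p^\infty] \neq E(\Q)[p^\infty]$, pick a new point $P$ of minimal order $p^k$; by the previous paragraph $k \geq 2$, and by minimality $pP \in E(\Q[p])[p^{k-1}] = E(\Q)[p^{k-1}]$, so $E(\Q)$ contains a point of order $p^{k-1}$. Mazur's theorem forces $p^{k-1} \in \{5,7\}$, i.e.\ $k = 2$, $p \in \{5,7\}$, and $E(\Q)$ has a rational point $T$ of order $p$. So everything reduces to the claim: \emph{for $p \in \{5,7\}$ and $E/\Q$ with a rational point $T$ of order $p$, no point $P$ with $pP = T$ lies in $E(\Q[p])$}. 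I expect this to be the main obstacle. The existence of such a $P$ would make $\langle P\rangle$ a $G_\Q$-stable cyclic group of order $p^2$, so $E$ would admit a rational cyclic $p^2$-isogeny; moreover, applying the already-proved $p$-torsion statement to the $p$-isogenous curve pins down the Galois structure of the kernel rather rigidly (trivial action on $\langle T\rangle$, plus a further rational $p$-torsion point on $E/\langle T\rangle$). I would finish by ruling this configuration out through the rational points of the relevant modular curves --- covers of $X_0(p^2)$ and $X_1(p^2)$ --- which for $p = 7$ reduces to the finitely many, explicitly known (CM) $\Q$-points, and for $p = 5$ to an explicit computation on the corresponding low-genus cover.
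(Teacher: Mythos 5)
Your decomposition into $\ell$-primary parts is genuinely different from the paper's, which argues prime-by-prime ($p\geq 11$, $p=7$, $p=5$) and leans on the degree classification of Theorem \ref{exact_degree}, Lemma \ref{ciklicko_prosirenje}, and a rank computation on $X_1(11)$ over $\Q[1][5]$. Your $\ell\neq p$ argument (tame ramification at $p$ for $p\geq 5$ and $\ell\neq p$, then Minkowski) is correct and arguably cleaner, since it disposes of all $\ell\neq p$ uniformly; note that $\Q[p]$ is totally real, so unramified at the archimedean place as well, which you need for Minkowski. Your mod-$p$ argument via Dickson and the nonexistence of nontrivial characters $G_\Q\to\F_p^\times$ trivial on $G_{\Q[p]}$ is also correct. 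And for $p=7$ your final reduction closes cleanly: a new point $P$ of order $49$ over $\Q[7]$ forces $\langle P\rangle$ to be $G_\Q$-stable (since $E(\Q[7])[7]$ is cyclic by Corollary \ref{cor-weil}), hence a rational $49$-isogeny, which Theorem \ref{thm-isog} forbids outright --- there are no CM points on $X_0(49)$ to chase, since $49$ is simply not an allowed isogeny degree.

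The genuine gap is the $p=5$, $25$-torsion case, which you flag but do not close. Here the isogeny theorem does not help: $25$ is an allowed degree and $X_0(25)$ has genus $0$, so rational $25$-isogenies abound. Your plan --- to rule out the remaining configuration by studying rational points on covers of $X_0(25)$ and $X_1(25)$ --- is plausible in outline but unexecuted, and this is precisely where the paper does its one genuinely hard piece of work. In Lemma \ref{nad5nema5} the putative mod-$25$ image $G_\Q(25)$ is pinned down (using Proposition \ref{prop-detgal} to control the determinant over $\Q[1][5]$) to a specific subgroup of $\GL_2(\Z/25\Z)$ of index $600$; since $25\mid 600$, this contradicts Greenberg's theorem (Theorem \ref{Autrhoprop}) that $25$ never divides $[\Aut_{\Z_5}(T_5(E)) : \mathrm{im}(\overline{\rho}_{5,E})]$. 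Without either carrying out your modular-curve computation or importing an input of comparable strength such as Greenberg's result, your proof is incomplete at exactly its crux.
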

	\begin{tm}\label{p2result}
	Let $E/\Q$ be an elliptic curve. \(E(\Q[2])_\tors\) is exactly one of the following groups:
	\begin{align*}
	\Z/N\Z, &\qquad 1 \leq N \leq 10 \text{, or } N = 12,\\
	\Z/2\Z \oplus \Z/2N\Z, &\qquad 1 \leq N \leq 4,
	\end{align*}
	and for each group $G$ from the list above there exists an $E/\Q$ such that \(E(\Q[2])_\tors  \simeq G.\)
	
	\end{tm}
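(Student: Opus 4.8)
The plan is to control the $\ell$-primary part of $E(\Q[2])_\tors$ for each prime $\ell$ separately and then combine the resulting bounds, exploiting throughout that $\Q[2]/\Q$ is procyclic: its only finite subextensions are the layers $\Q[n][2]$, each of which is cyclic of degree $2^n$, totally real, and ramified only above $2$. I would first note that $E(\Q[2])_\tors$ is finite --- for instance because $\Q[2]\subseteq\Q^{\mathrm{ab}}$ and $E(\Q^{\mathrm{ab}})_\tors$ is finite (and lies on an explicit finite list of groups) for every $E/\Q$; that list already reduces matters to showing that the groups on it which are \emph{not} on Mazur's list cannot occur over the much smaller field $\Q[2]$.

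For the odd part the engine is a ramification argument. If $\ell$ is odd and $P\in E(\Q[2])$ has order $\ell^k$, then $\Q(P)$ is one of the layers $\Q[n][2]$ and is contained in $\Q(E[\ell^k])$, which is unramified outside $\ell N_E$; since $\Q[n][2]$ is ramified at $2$ as soon as $n\geq 1$, this forces $2\mid N_E$ and otherwise $P\in E(\Q)$. When $2\mid N_E$, a group-theoretic analysis of the images of $\overline{\rho}_{E,\ell}$ and $\rho_{E,\ell^\infty}$ bounds $[\Q(P):\Q]$: the subgroup fixing $P$ must be normal with cyclic $2$-power quotient, and the Sylow-$2$ subgroups of $\GL_2(\F_\ell)$ have small cyclic quotients, so --- together with Mazur's theorem limiting which $\ell$ admit a rational $\ell$-isogeny over $\Q$ at all, and the known constraints on the associated isogeny characters for the finitely many $\ell$ that do --- one finds that any new odd torsion point already lies in $E(\Q[1][2])=E(\Q(\sqrt 2))$ (and, in a few cases, in $E(\Q[2][2])$). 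At that point I would invoke the classifications of torsion subgroups of $E/\Q$ over quadratic and quartic number fields: the only groups occurring there beyond Mazur's list --- such as $\Z/15\Z$, $\Z/16\Z$, $\Z/21\Z$ --- require a base field ramified at an odd prime (for example $\Q(\sqrt 5)$ for $\Z/15\Z$), hence a field not contained in $\Q[2]$, so the odd part of $E(\Q[2])_\tors$ stays inside Mazur's list.

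For the $2$-primary part the ramification shortcut fails, since $\Q(E[2^\infty])\supseteq\Q(\zeta_{2^\infty})\supseteq\Q[2]$ is genuinely ramified at $2$. Instead I would go through the classification of $2$-adic images $\rho_{E,2^\infty}(G_\Q)$ of Rouse and Zureick-Brown: for each possible image $H$, identify the subgroup $H'\leq H$ cutting out the maximal subfield of $\Q(E[2^\infty])$ that is abelian over $\Q$, totally real and ramified only above $2$, and compute the largest cyclic (resp.\ bicyclic) $2$-subgroup of $E[2^\infty]$ fixed by $H'$; this caps the $2$-part of $E(\Q[2])_\tors$ at $\Z/8\Z$ (resp.\ $\Z/2\Z\oplus\Z/8\Z$). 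Combining with the small cyclic odd parts and checking the finitely many compatible combinations then gives the upper bound, i.e.\ that $E(\Q[2])_\tors$ always lies on the stated list. For realizability: every group $G$ on the list already occurs as $E(\Q)_\tors$ for infinitely many $E/\Q$ (the modular curves $X_1(G)$ have genus $0$), and choosing such an $E$ with good reduction at $2$ and with $\overline{\rho}_{E,2}$ (and the relevant $\overline{\rho}_{E,\ell}$) as large as $E(\Q)_\tors = G$ permits kills all possible growth by the two analyses above, so $E(\Q[2])_\tors = E(\Q)_\tors = G$; a short explicit list of witnesses can be written down and verified directly.

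I expect the $2$-primary analysis to be the main obstacle: unlike the odd case it cannot be dispatched by a Minkowski or soft ramification argument, and one must work carefully through the lattice of possible $2$-adic images --- tracking complex conjugation to enforce total reality, and conductors to enforce ramification only above $2$ --- to confirm that no image produces a $2$-power torsion subgroup larger than $\Z/2\Z\oplus\Z/8\Z$ over $\Q[2]$. A secondary technical point is bounding torsion growth of $E/\Q$ over the higher layers $\Q[n][2]$ with $n\geq 2$, where one must lean on explicit results on torsion of rational elliptic curves over number fields of degree $4$ (and, in the few cases where it is needed, degree $8$) rather than on pure group theory.
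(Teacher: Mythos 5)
Your proposal takes a genuinely different route from the paper, and while the high-level scaffolding is sensible, there are gaps at exactly the places the argument has to be sharp.

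Your ramification observation for odd $\ell$ is a nice shortcut in the case $2\nmid N_E$: then $\Q(E[\ell^k])$ is unramified at $2$, so any torsion point defined over a layer $\Q[n][2]$ with $n\ge 1$ is already rational. But the case $2\mid N_E$ --- which is the generic one --- is where you wave your hands. You say ``a group-theoretic analysis of the images $\overline\rho_{E,\ell}$ bounds $[\Q(P):\Q]$,'' but the input actually needed is the explicit classification of which degrees $[\Q(P):\Q]$ can occur for $P$ of order $\ell$ on a rational elliptic curve; this is Theorem~\ref{exact_degree} from Gonz\'alez-Jim\'enez--Najman, a substantial theorem that cannot be replaced by Sylow arithmetic in $\GL_2(\F_\ell)$. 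From that list one reads off which degrees are $2$-powers (and in particular that $11$ is impossible, $13$ and $17$ can only happen over fields of degree $4$, $8$, or $16$, etc.), which is what localizes the problem to the first few layers. Your proposal does not produce these degree constraints.

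For the $2$-primary part you propose going through the Rouse--Zureick-Brown classification of $2$-adic images and filtering for the abelian-totally-real-ramified-only-at-$2$ subfield; you yourself flag this as ``the main obstacle.'' The paper simply quotes Fujita's theorem \cite{fuj}, which asserts directly that $E(\Q[2])[2^\infty]\subseteq\Z/2\Z\oplus\Z/8\Z$ --- this is exactly the bound you are hoping to recover, already proved. Not using it turns the easiest step of the paper into the hardest step of your proof. Your RZB route is in principle viable but is an independent research project, not a proof sketch.

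Finally, ``checking the finitely many compatible combinations'' hides most of the actual work. Knowing the possible $\ell$-primary pieces separately does not rule out, for example, $\Z/14\Z$, $\Z/2\Z\oplus\Z/10\Z$, $\Z/2\Z\oplus\Z/12\Z$, $\Z/18\Z$, $\Z/20\Z$, $\Z/24\Z$, or $\Z/15\Z$. Each of these requires a specific argument: the paper rules them out via Mazur plus isogeny constraints (Theorem~\ref{thm-isog}), via $X_1(15)$ and $X_1(2,10)$ having no new points over $\Q[2][2]$ and $\Q(\sqrt2)$ respectively, via the classification of $E/\Q$ torsion over quadratic fields (\cite{najman,najman9,kam,km}), and via Chou's $\Q^{\mathrm{ab}}$ result \cite{chou2} for $\Z/24\Z$. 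Your heuristic that forbidden groups over quadratic fields ``require a base field ramified at an odd prime'' is not a theorem and is not obviously true (it has to be checked curve-by-curve or via explicit Mordell--Weil computations on $X_1(N)$ over $\Q(\sqrt2)$ and $\Q[2][2]$, which is what the paper does). As a proof, the proposal is therefore incomplete at both the degree-bounding stage and the combination stage; as a plan, it diverges from the paper mainly in reinventing Fujita's theorem and in not invoking the Gonz\'alez-Jim\'enez--Najman degree classification, both of which the paper uses to avoid the hard computations you are proposing.
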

	\begin{tm}\label{p3result}
	Let $E/\Q$ be an elliptic curve. \(E(\Q[3])_\tors\) is exactly one of the following groups:
	\begin{align*}
	\Z/N\Z, &\qquad 1 \leq N \leq 10 \text{, or } N = 12, 21 \text{ or } 27,\\
	\Z/2\Z \oplus \Z/2N\Z, &\qquad 1 \leq N \leq 4.
	\end{align*}
		and for each group $G$ from the list above there exists an $E/\Q$ such that \(E(\Q[3])_\tors  \simeq G.\)
	\end{tm}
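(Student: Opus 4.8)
\smallskip
\noindent\textit{Proof strategy.} Since $\Q\subseteq\Q[3]$, we have $E(\Q)_\tors\subseteq E(\Q[3])_\tors$, so it suffices to prove that $E(\Q[3])_\tors$ always lies on the list and that every group on the list is realized by some $E/\Q$. The structural point is that $\Gal[\Q[3]][\Q]\cong\Z_3$ is procyclic, so its closed subgroups are exactly the $3^n\Z_3$ and the subfields of $\Q[3]$ are exactly the layers $\Q[n][3]$; hence for any torsion point $P\in E(\Q[3])$ the field $\Q(P)$ is some $\Q[n][3]$ — cyclic over $\Q$ of $3$-power degree, totally real, and unramified outside $3$. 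I would use repeatedly the elementary lemma that a two-dimensional mod-$\ell$ Galois representation which is reducible on restriction to a normal subgroup of index $3$ is already reducible (either its invariant line is fixed by the whole group, or there are three invariant lines and the image is abelian, hence reducible). Running this along the tower $\Q\subset\Q[1][3]\subset\Q[2][3]\subset\cdots$, if $\rho_{E,\ell}$ is irreducible over $\Q$ then $E(\Q[3])[\ell]=0$; so any $\ell$-torsion of $E(\Q[3])$ comes from a rational $\ell$-isogeny, whose kernel generator $P$ satisfies $[\Q(P):\Q]=\operatorname{ord}(\chi)$ for the isogeny character $\chi$.

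The first main step is to show the prime-to-$3$ part of $E(\Q[3])_\tors$ is already defined over $\Q[1][3]$. By the classification of rational cyclic isogenies of elliptic curves over $\Q$, a rational $\ell^k$-isogeny with $\ell\neq 3$ prime forces $\ell^k\in\{2,4,5,7,8,11,13,16,17,19,25,37,43,67,163\}$. For the kernel generator to lie in $\Q[3]$ we need $\operatorname{ord}(\chi)$ to be a power of $3$, hence to divide the $3$-part of $\#(\Z/\ell^k\Z)^\times$; this $3$-part is $1$ for $\ell^k\in\{2,4,5,8,11,16,17,25\}$, is $3$ for $\ell^k\in\{7,13,43,67\}$, and is $9,9,81$ for $\ell=19,37,163$. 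So for $\ell\notin\{19,37,163\}$ such a point lies in $\Q$ or $\Q[1][3]$, while for $\ell\in\{19,37,163\}$ the elliptic curves over $\Q$ with a rational $\ell$-isogeny are the quadratic twists of the CM curve ($\ell=19,163$) and of two fixed sporadic non-CM curves ($\ell=37$), for which one computes the isogeny character directly — using that $\Q$ admits no nontrivial everywhere-unramified abelian extension, and that the CM isogeny characters have order at most $2$ — to see that $\Q(P)$ is always ramified outside $3$ or of degree prime to $3$, hence never contained in $\Q[3]$. Thus the prime-to-$3$ part of $E(\Q[3])_\tors$ equals that of $E(\Q[1][3])_\tors$.

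For the $3$-primary part, $\Q[3]$ is totally real and hence contains no $3$-power root of unity other than $1$; by the Weil pairing, $E(\Q[3])[3^\infty]$ is then cyclic, and being cyclic and $\Gal[\Q[3]][\Q]$-stable it and all of its subgroups are Galois-stable, so $E/\Q$ admits a rational cyclic $3^k$-isogeny and $3^k\mid 27$ by the isogeny classification. Since $\#(\Z/9\Z)^\times=6$ and $\#(\Z/27\Z)^\times=18$, the kernel generator of a rational $9$-isogeny lies in $\Q$ or $\Q[1][3]$, and that of a rational $27$-isogeny in $\Q$, $\Q[1][3]$, or $\Q[2][3]$. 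Combining with the previous paragraph, $E(\Q[3])_\tors$ either equals $E(\Q[1][3])_\tors$, or contains a point of order $27$ genuinely defined over $\Q[2][3]$. In the first case the classification of torsion of $\Q$-curves base-changed to cubic fields — whose only group beyond Mazur's list is $\Z/21\Z$, occurring precisely over $\Q(\zeta_9)^+=\Q[1][3]$ — yields exactly the groups in the statement. In the second case $E$ has a rational $27$-isogeny, so (the isogeny classification forbidding rational $54$-, $135$- and $189$-isogenies) $E$ has no rational $2$-, $5$- or $7$-torsion; a direct check of the finitely many elliptic curves over $\Q$ with a rational $27$-isogeny eliminates the remaining possibility that their full $2$-torsion becomes rational over a cubic layer, so $E(\Q[3])_\tors=\Z/27\Z$.

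Finally, for realizability, the groups on Mazur's list are obtained from elliptic curves with the prescribed rational torsion chosen so that the relevant $\rho_{E,\ell}$ are irreducible (or the relevant isogeny characters do not cut out $\Q[1][3]$), so that $E(\Q[3])_\tors=E(\Q)_\tors$; the group $\Z/21\Z$ is realized by $162\mathrm{b}1$ (equivalently by the isogenous curve $162\mathrm{c}1$) over $\Q[1][3]=\Q(\zeta_9)^+$; and $\Z/27\Z$ by an explicit curve carrying a rational $27$-isogeny whose kernel generator is defined over $\Q[2][3]\subseteq\Q(\zeta_{27})$. I expect the main obstacles to be the elimination of the borderline primes $\ell\in\{19,37,163\}$ at the higher layers by explicit isogeny-character computations, and, in the $\Z/27\Z$ case, the explicit analysis of the curves on $X_0(27)$ together with the final bookkeeping that assembles the prime-to-$3$ and $3$-primary parts into precisely the list above.
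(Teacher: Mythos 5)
Your outline is structurally parallel to the paper's argument: bound the primes $\ell$ at which torsion could grow via Theorem~\ref{exact_degree} and Theorem~\ref{thm-isog}, descend each torsion point to a specific layer $\Q[m][3]$ via Corollary~\ref{kojepolje}, and analyze there. The central gap is the step where you invoke ``the classification of torsion of $\Q$-curves base-changed to cubic fields, whose only group beyond Mazur's list is $\Z/21\Z$.'' As stated this is false: over cubic fields an elliptic curve defined over $\Q$ can acquire $\Z/13\Z$, $\Z/14\Z$, $\Z/18\Z$, and $\Z/21\Z$ beyond Mazur's list (\cite{najman}). What the theorem needs is that over the \emph{specific} cubic field $\Q[1][3]=\Q(\zeta_9)^+$ only $\Z/21\Z$ arises, and this is where most of the work lies: one must separately show $E(\Q[1][3])[13]=\{O\}$ (rank and torsion of $J_1(13)$ over $\Q(\zeta_9)^+$), that $X_1(14)(\Q[1][3])=X_1(14)(\Q)$, and — the hardest part — that $X_1(18)(\Q(\zeta_9)^+)$ consists of cusps, which in the paper requires a delicate determination of $J_1(18)(\Q(\zeta_9)^+)_\tors$ by comparing $J_1(18)$ over $\Q$, $\Q(\zeta_3)$, and $\Q(\zeta_9)$. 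None of this is a quotable black box, and your sketch does not supply it.

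Two further issues. Your lemma — a two-dimensional mod-$\ell$ representation reducible on an index-$3$ normal subgroup is already reducible — is false in general: if $N$ acts by scalars, the induced action of $G/N\simeq\Z/3\Z$ on $\mathbb{P}^1(\F_\ell)$ can be fixed-point-free when $3\mid \ell+1$, giving an abelian but irreducible image inside a nonsplit Cartan. It is rescued in the elliptic-curve setting only by an extra step: $\rho(G_{\Q[n][3]})$ scalar forces $\det\rho(G_{\Q[n][3]})$ into the squares of $(\Z/\ell\Z)^\times$, which is incompatible with the cyclotomic determinant being surjective on $G_\Q$ and having index dividing $3$ on $G_{\Q[n][3]}$, unless $\ell = 2$; either add that, or use Lemma~\ref{ciklicko_prosirenje} directly. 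Finally, the treatment of $\ell=19$: you rely on ``CM isogeny characters have order at most $2$,'' which is incorrect — Theorem~\ref{exact_degree} shows $[\Q(P):\Q]=9$ occurs for $p=19$, so the isogeny character can have order $9$, and the kernel generator could a priori live in a cyclic degree-$9$ field. (For $\ell\in\{37,43,67,163\}$ no such analysis is needed, since the degrees in Theorem~\ref{exact_degree} there are never powers of $3$.) The paper excludes $\ell=19$ by factoring the $19$th division polynomial of a curve with $j=-2^{15}\cdot 3^3$ over $\Q[2][3]$; some explicit check of this type, or a correct ramification argument showing $\Q(P)$ must ramify at $19$ and hence cannot sit inside $\Q(\zeta_{3^\infty})$, is required.
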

	\begin{remark}
		By Mazur's theorem \cite{mazur2} we see that
	\begin{align*}
	\{E(\Q[2])_\tors : E/\Q \text{ elliptic curve}\} &= \{E(\Q)_\tors : E/\Q \text{ elliptic curve}\}, \\
	\{E(\Q[3])_\tors : E/\Q \text{ elliptic curve}\} &= \{E(\Q)_\tors : E/\Q \text{ elliptic curve}\} \cup \{\Z/21\Z, \Z/27\Z\}.
	\end{align*}
	However, given a specific $E/\Q$ it is not necessarily the case that $E(\Q_{\infty,p})_\tors = E(\Q)_\tors$. Indeed there are many elliptic curves for which torsion grows from $\Q$ to $\Q_{\infty,p}$, and we investigate this question further in Section \ref{sec:examples}. Specifically, for each prime $p$ we find for which groups $G$ there exists infinitely many $j$-invariants $j$ such that there exists an elliptic curve $E/\Q$ with $j(E)=j$ and such that  $E(\Q)_\tors \subsetneq E(\Q_{\infty,p})_\tors \simeq G$.
	\end{remark}

\section{Notation and auxiliary results}

In this paper we deal with elliptic curves defined over $\Q$, so unless noted otherwise, all elliptic curves will be assumed to be defined over $\Q$.

We will use the following notation throughout the paper: 
\begin{itemize}
	\item For a positive integer \(n\), \(\rho_{E,n}\) is the mod \(n\) Galois representation attached to elliptic curve \(E\); we will write just \(\rho_n\) when it is obvious what \(E\) is.
	\item For a number field $K$, we denote $G_K:=\Gal(\overline K/K)$.
	\item By \(G_{E,K}(n)\) (or just \(G_E(n)\)) we will denote the image (after a choice of basis of $E[n]$) of \(\rho_{E,n}(G_K)\) in $\GL_2(\Z/n\Z)$ i.e.
	\[G_{E,K}(n) = \left\lbrace\rho_{E, n}(\sigma) : \sigma \in \Gal[\overline K][K]\right\rbrace.\]
	\item For a prime number \(\ell \), \(\overline{\rho}_{\ell, E}\) is the \(\ell \)-adic Galois representation and \(T_\ell(E)\) is \(\ell\)-adic Tate module attached to \(E\).
\item We say that an elliptic curve $E$ has or admits an $n$-isogeny over $K$ if there exists an isogeny $f:E\rightarrow E'$ for some elliptic curve $E'$ of degree $n$ with cyclic kernel and such that $E$, $E'$ and $f$ are all defined over $K$, or equivalently if $G_K$ acts on $\ker f$.
\end{itemize}

To make this paper as self-contained as reasonably possible, we now list the most important known results that we will use.

\begin{proposition}\cite[Ch.\ III, Cor.\ 8.1.1]{silverman} \label{prop-weil} Let $E/L$ be an elliptic curve with $L\subseteq\Qbar$. For each integer $n\geq 1$, if $E[n]\subseteq E(L)$ then the $n$\textsuperscript{th} cyclotomic field $\Q(\zeta_n)$ is a subfield of $L$.
\end{proposition}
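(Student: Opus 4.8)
\emph{Proof proposal.} The plan is to derive this from the standard properties of the Weil pairing. Recall (see \cite[Ch.\ III, \S 8]{silverman}) that for every $n\geq 1$ there is a pairing
\[
e_n \colon E[n] \times E[n] \longrightarrow \mu_n ,
\]
where $\mu_n \subseteq \Qbar$ denotes the group of $n$\textsuperscript{th} roots of unity, and that $e_n$ is bilinear, alternating (so $e_n(T,T)=1$ for all $T$), non-degenerate, and Galois-equivariant in the sense that $e_n(P^\sigma,Q^\sigma)=e_n(P,Q)^\sigma$ for all $\sigma\in G_L$ and all $P,Q\in E[n]$.

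First I would check that a single pairing value already generates $\mu_n$. Fix a basis $\{P,Q\}$ of $E[n]\simeq(\Z/n\Z)^2$ and set $\zeta := e_n(P,Q)$. By bilinearity and the alternating property, $e_n(P,R)\in\langle\zeta\rangle$ for every $R\in E[n]$; hence if $\zeta$ had order $d<n$, then $e_n(dP,R)=e_n(P,R)^d=1$ for all $R\in E[n]$, and non-degeneracy would force $dP=O$, contradicting the fact that $P$ has order $n$. Thus $\zeta$ is a primitive $n$\textsuperscript{th} root of unity, so $\langle\zeta\rangle=\mu_n$ (and in fact the image of $e_n$ is exactly $\langle\zeta\rangle$, since $e_n(aP+bQ,\,cP+dQ)=\zeta^{ad-bc}$).

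Now assume $E[n]\subseteq E(L)$, so that every $\sigma\in G_L$ fixes each point of $E[n]$, in particular $P$ and $Q$. Galois-equivariance then yields
\[
\zeta^\sigma = e_n(P,Q)^\sigma = e_n(P^\sigma,Q^\sigma) = e_n(P,Q) = \zeta
\qquad\text{for all }\sigma\in G_L,
\]
so $\zeta\in L$. Since $\zeta$ generates $\mu_n$, we get $\mu_n\subseteq L$, i.e.\ $\Q(\zeta_n)\subseteq L$, which is the claim. The whole argument rests only on the existence and the listed properties of the Weil pairing; granting those, the sole mildly delicate point is the non-degeneracy computation in the second paragraph showing that one pairing value already generates $\mu_n$, and there is no substantial obstacle.
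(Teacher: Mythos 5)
Your proof is correct, and it is essentially the standard argument from the cited reference: the paper simply cites \cite[Ch.\ III, Cor.\ 8.1.1]{silverman} and gives no proof of its own, and Silverman's proof there is exactly yours (first establishing, via non-degeneracy, that $e_n(P,Q)$ is a primitive $n$\textsuperscript{th} root of unity for a basis $\{P,Q\}$, then using Galois-equivariance to conclude it lies in $L$). No gaps; nothing further to add.
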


An immediate consequence of this proposition is

\begin{corollary} \label{cor-weil}
Let \(p\) and \(q\) be odd prime numbers. Then
\[E(\Q[p])[q] \simeq \{O\} \quad \text{or} \quad \Z/q\Z.\]
\end{corollary}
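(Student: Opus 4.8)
The goal is to show that $E(\Q_{\infty,p})[q]$ is either trivial or $\Z/q\Z$ when $p,q$ are odd primes; that is, we must rule out the full $q$-torsion $(\Z/q\Z)^2$ sitting inside $E(\Q_{\infty,p})$. The plan is a direct application of Proposition \ref{prop-weil}: if $E[q]\subseteq E(\Q_{\infty,p})$, then the Weil pairing forces $\Q(\zeta_q)\subseteq \Q_{\infty,p}$. So the whole argument reduces to the purely field-theoretic fact that the cyclotomic field $\Q(\zeta_q)$ is \emph{not} contained in the $\Z_p$-extension $\Q_{\infty,p}$ for odd primes $p,q$.

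To see this, I would compare degrees and ramification. The field $\Q(\zeta_q)$ has degree $q-1$ over $\Q$, which is even since $q$ is odd, so $\Q(\zeta_q)$ contains the quadratic subfield $\Q(\sqrt{q^*})$ where $q^* = (-1)^{(q-1)/2}q$. On the other hand, every finite subextension of $\Q_{\infty,p}/\Q$ has degree a power of $p$, and $\Q_{\infty,p}/\Q$ is ramified only at $p$ (this is part of the standard structure theory recalled in the introduction, cf.\ \cite[Chapter 13]{washington}). If $q$ is odd and $q\neq p$, then $2\nmid p^n$ shows $\Q(\sqrt{q^*})\not\subseteq\Q_{\infty,p}$ already on degree grounds, hence $\Q(\zeta_q)\not\subseteq\Q_{\infty,p}$. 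If $q=p$, one uses instead that $\Q(\zeta_p)/\Q$ has degree $p-1$ prime to $p$, so its intersection with $\Q_{\infty,p}$ is $\Q$, and again $\Q(\zeta_p)\not\subseteq\Q_{\infty,p}$; alternatively, $\Q(\zeta_p)$ is ramified at $p$ with ramification index $p-1>1$ not a power of $p$.

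Combining: since $\Q(\zeta_q)\not\subseteq\Q_{\infty,p}$ for all odd primes $p,q$, Proposition \ref{prop-weil} shows $E[q]\not\subseteq E(\Q_{\infty,p})$. Therefore $E(\Q_{\infty,p})[q]$, being a subgroup of $E[q]\simeq(\Z/q\Z)^2$ that is not everything, is a cyclic $\F_q$-subspace, i.e.\ isomorphic to $\{O\}$ or $\Z/q\Z$. I do not anticipate a genuine obstacle here; the only thing to be slightly careful about is packaging the field-theoretic input cleanly (degree versus ramification) and handling the case $q=p$ separately from $q\neq p$, though both are immediate from the structure of $\Z_p$-extensions already recalled above.
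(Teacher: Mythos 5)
Your proof is correct and takes essentially the same route as the paper: apply Proposition \ref{prop-weil} to reduce to the field-theoretic fact that $\Q(\zeta_q)\not\subseteq\Q_{\infty,p}$. The paper simply cites that $\pm 1$ are the only roots of unity in $\Q_{\infty,p}$, whereas you deduce the non-containment from a degree argument; note your case split between $q = p$ and $q\neq p$ is unnecessary, since $q-1$ is even for every odd prime $q$ (including $q=p$) while every finite subextension of $\Q_{\infty,p}/\Q$ has degree an odd power of $p$, so the parity argument already handles both cases uniformly.
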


\begin{remark}
We have that \(E[q^n] \nsubseteq E(\Q[p])\), for each positive integer \(n\).
\end{remark}

\begin{proof}
Since \(-1\) and \(1\) are the only roots of unity contained in \(\Q[p]\), by Proposition \ref{prop-weil} \(E[q]\) cannot be contained in \(E(\Q[p])\). The result follows from Proposition \ref{prop-weil}.
\end{proof}

\begin{lemma}\cite[Lemma 4.6]{dlns} \label{lem-j-k_isog}
Let $E$ be an elliptic curve over a number field $K$, let $F$ be a Galois extension of $\Q$, let $p$ be a prime, and let $k$ be the largest integer for which $E[p^k]\subseteq E(F)$.
If $E(F)_\tors$ contains a subgroup isomorphic to $\Z/p^k\Z\oplus\Z/p^j\Z$ with $j\ge k$, then~$E$ admits a $K$-rational $p^{j-k}$-isogeny.
\end{lemma}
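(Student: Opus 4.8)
The plan is to produce a cyclic subgroup $C\le E$ of order $p^{j-k}$ that is stable under $G_K$; then $E\to E/C$ is a $K$-rational isogeny with cyclic kernel of degree $p^{j-k}$, which is exactly a $K$-rational $p^{j-k}$-isogeny. The group $C$ will be extracted from $A:=E(F)\cap E[p^j]$.

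The crux is the claim that $A$ is $G_K$-stable, and this is the only place where the hypothesis that $F/\Q$ is Galois is genuinely needed. Since $K$ is a number field, $G_K\le G_\Q$; and normality of $F/\Q$ gives $\sigma(F)=F$ for every $\sigma\in G_\Q$, so $\sigma\big(E(F)\big)=E\big(\sigma(F)\big)=E(F)$ for all $\sigma\in G_K$ (we may assume $K\subseteq F$; in all applications $K=\Q$). As $E[p^j]$ is automatically $G_K$-stable, the intersection $A$ is too. The subtlety to watch is that $E$ is defined only over $K$, not over $\Q$, so this $G_K$-stability of the $F$-rational torsion really does use that $F$ is normal over $\Q$, not merely over $K$.

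Next I would determine the structure of $A$. Being a finite subgroup of $E[p^j]\simeq(\Z/p^j\Z)^2$, it has the form $A\simeq\Z/p^a\Z\oplus\Z/p^b\Z$ with $a\le b\le j$. From $E[p^k]\subseteq E(F)$ we get $E[p^k]\subseteq A$, so $a\ge k$. By maximality of $k$, $E[p^{k+1}]\not\subseteq E(F)$, hence $E[p^{k+1}]\not\subseteq A$; since $E[p^{k+1}]$ is the unique subgroup of $E$ of type $(\Z/p^{k+1}\Z)^2$ and $A$ would contain such a subgroup whenever $a\ge k+1$, we must have $a=k$. Finally, the given subgroup $\Z/p^k\Z\oplus\Z/p^j\Z\le E(F)_\tors$ has exponent $p^j$, hence is contained in $A$; as it has an element of order $p^j$, so does $A$, forcing $b\ge j$. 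Thus $b=j$ and $A\simeq\Z/p^k\Z\oplus\Z/p^j\Z$.

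It remains to set $C:=p^kA=\{p^kP:P\in A\}$. As $j\ge k$, multiplication by $p^k$ kills the $\Z/p^k\Z$-summand of $A$ and maps the $\Z/p^j\Z$-summand onto a cyclic group of order $p^{j-k}$, so $C$ is cyclic of order $p^{j-k}$; and $C$ is $G_K$-stable, being the image of the $G_K$-stable group $A$ under the $G_K$-equivariant endomorphism $[p^k]$ of $E$. Then $E\to E/C$ is the required $K$-rational $p^{j-k}$-isogeny (for $j=k$ the statement is trivial, $C=\{O\}$). Once the structure of $A$ is known, this last step is pure abelian-group bookkeeping, so beyond the stability argument of the second paragraph I foresee no obstacle.
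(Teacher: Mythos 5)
The paper only cites this lemma from \cite[Lemma~4.6]{dlns} and offers no proof of its own, so there is no in-paper argument to compare against; your proof is correct and is essentially the standard argument behind the cited result. The three ingredients are all handled properly: $G_K$-stability of $A=E(F)\cap E[p^j]$ from $F/\Q$ being Galois (so $\sigma(F)=F$ for every $\sigma\in G_K\subseteq G_\Q$), the identification $A\simeq \Z/p^k\Z\oplus\Z/p^j\Z$ via maximality of $k$ (pinning the smaller exponent to $k$) together with the hypothesized subgroup (pinning the larger to $j$), and the passage to the $G_K$-stable cyclic group $C=p^kA$ of order $p^{j-k}$ giving the isogeny $E\to E/C$.
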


\begin{tm}\label{thm-isog}\cite{mazur2,kenku2,kenku3,kenku4,kenku5}
Let $E/\Q$ be an elliptic curve with a rational $n$-isogeny. Then
\[
n\leq 19 \text{ or } n \in\{21,25,27,37,43,67,163\}.
\]
\end{tm}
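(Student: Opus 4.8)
This is the classical isogeny theorem of Mazur and Kenku, and the natural route is through rational points on modular curves. The plan is: a rational $n$-isogeny on some $E/\Q$ is the same datum as a non-cuspidal rational point on the modular curve $X_0(n)$, so the claim is equivalent to determining for which $n$ the set $X_0(n)(\Q)$ contains a non-cuspidal point. One then argues according to the genus of $X_0(n)$.

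First I would dispose of the small levels where there is nothing to prove: $X_0(n)$ has genus $0$ precisely for $n\in\{1,\dots,10,12,13,16,18,25\}$, and for these $n$ the curve is $\Q$-isomorphic to $\mathbb{P}^1$, hence has infinitely many rational points --- these are exactly the $n$ that make the bound in the theorem non-vacuous. Next, for the finitely many $n$ with $X_0(n)$ of genus $1$, namely $n\in\{11,14,15,17,19,20,21,24,27,32,36,49\}$, I would identify $X_0(n)$ with an explicit elliptic curve over $\Q$, verify that its Mordell--Weil rank is $0$ (so that $X_0(n)(\Q)$ is finite and computable from its torsion), and read off the non-cuspidal points: for larger $n$ these are CM points, while for a few small levels (e.g.\ $n=11,17$) there is in addition a small explicit set of exceptional non-CM curves. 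For $n$ with $X_0(n)$ of genus $\ge 2$, Faltings' theorem gives finiteness of $X_0(n)(\Q)$ but not effectivity, so a finer argument is needed.

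The crux is the prime case with $X_0(p)$ of genus $\ge 2$, i.e.\ $p\ge 23$: here one must show that $X_0(p)(\Q)$ consists only of cusps unless $p\in\{37,43,67,163\}$. I would follow Mazur's Eisenstein ideal method: pass from $J_0(p)$ to its Eisenstein quotient $J$, prove that $J(\Q)$ is finite by controlling it through the rational torsion together with the behaviour of the relevant $L$-values, and then invoke a formal immersion argument at an auxiliary prime $\ell$ of good reduction --- a putative non-cuspidal rational point on $X_0(p)$ would reduce modulo every such $\ell$ to the same place as a cusp, and the formal immersion at that cusp of the relevant map into $J$ then forces the point itself to be a cusp. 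Pinning down the four exceptional primes amounts to exhibiting the corresponding $j$-invariants (CM for $p=43,67,163$, and two non-CM $j$-invariants for $p=37$). This step --- the structure theory of the Eisenstein ideal, finiteness of the Eisenstein quotient, and the $q$-expansion/formal-immersion input --- is by far the main obstacle; everything else is comparatively routine bookkeeping.

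Finally, for composite and prime-power $n$ I would run Kenku's descent. A rational $n$-isogeny with $n=\prod_i \ell_i^{e_i}$ yields rational $\ell_i^{e_i}$-isogenies, and more generally rational points on fibre products $X_0(m_1)\times_{X(1)}X_0(m_2)$ for suitable divisors $m_i\mid n$; analysing the rational points of these fibre products, and of $X_0(\ell^e)$ for prime powers, reduces everything to the prime and genus-$\le 1$ cases already settled, and a careful study of which isogeny configurations can simultaneously be defined over $\Q$ shows that the only composite values that occur are $n\in\{4,6,8,9,10,12,14,15,16,18,21,25,27\}$. Combining this with the prime list $\{2,3,5,7,11,13,17,19,37,43,67,163\}$ yields exactly $n\le 19$ or $n\in\{21,25,27,37,43,67,163\}$, as asserted.
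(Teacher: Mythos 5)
The paper does not prove this statement; it is quoted as a black box with citations to Mazur's paper on rational isogenies of prime degree and to Kenku's series of papers on $X_0(n)$ for the remaining composite and prime-power levels. Your proposal is therefore not being measured against an in-paper argument but against the literature the theorem points to, and on that score it is a faithful and essentially accurate roadmap: the reduction to rational points on $X_0(n)$, the triage by genus (with the correct lists of genus $0$ and genus $1$ levels), the rank-$0$ computations in genus $1$, Mazur's Eisenstein-quotient and formal-immersion argument for $X_0(p)$ with $p\ge 23$, the identification of the exceptional primes $37,43,67,163$ via CM and the two sporadic $j$-invariants at $37$, and Kenku's descent through fibre products for the composite levels. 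This is exactly the structure of the cited proofs.

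Two small remarks. First, your phrase ``controlling it through the rational torsion together with the behaviour of the relevant $L$-values'' slightly anachronizes Mazur's original argument: the finiteness of the Eisenstein quotient's Mordell--Weil group is obtained there by a descent built on the structure of the Eisenstein ideal and the cuspidal subgroup, not by an $L$-value criterion. The $L$-value (winding quotient) formulation is a later reinterpretation due to Merel and others; it is morally equivalent but is not what Mazur actually does. Second, what you have written is, necessarily, a survey of the proof rather than a proof --- the Eisenstein ideal theory, the construction of the formal immersion, and Kenku's case analysis of isogeny configurations are each substantial, and none of them is supplied. That is a reasonable choice for a result of this depth, and it matches the paper's own decision to cite rather than reprove, but it should be understood that the proposal as written does not constitute an independent verification of the theorem.
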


\begin{corollary}\label{torsiondestroyer}
Let \(p\) be an odd prime number, \(E/\Q\) elliptic curve and \(P \in E(\Q[p])_\tors\) a point of order \(q^n\) for some prime \(q\) and positive integer \(n\), then
\[q^n \in \{2, 3, 4, 5, 7, 8, 9, 11, 13, 16, 17, 19, 25, 27,32, 37, 43, 67, 163\}.\]
\end{corollary}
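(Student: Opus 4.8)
The plan is to reduce the statement to the known classification of rational isogenies of elliptic curves over $\Q$ (Theorem \ref{thm-isog}) via Lemma \ref{lem-j-k_isog}. So suppose $P \in E(\Q_{\infty,p})_\tors$ has order $q^n$ for a prime $q$ and $n \geq 1$; we must bound $q^n$. First I would dispose of the prime $q = p$ separately if needed, but in fact the argument below is uniform. Since $\Q_{\infty,p}/\Q$ is Galois (indeed abelian), we may apply Lemma \ref{lem-j-k_isog} with $K = \Q$, $F = \Q_{\infty,p}$, and the prime $q$: letting $k$ be the largest integer with $E[q^k] \subseteq E(\Q_{\infty,p})$, the group $\langle P\rangle \cong \Z/q^n\Z$ together with the full $E[q^k]$ generates a subgroup of $E(\Q_{\infty,p})_\tors$ of the form $\Z/q^k\Z \oplus \Z/q^j\Z$ for some $j \geq \max(k,n) \geq n$, so $E$ admits a rational $q^{j-k}$-isogeny.

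Next I would bound $k$. When $q$ is odd, Corollary \ref{cor-weil} (via the Weil pairing, Proposition \ref{prop-weil}, together with the fact that the only roots of unity in $\Q_{\infty,p}$ are $\pm 1$) gives $E(\Q_{\infty,p})[q] \cong \{O\}$ or $\Z/q\Z$, hence $k \leq 0$, i.e.\ $k = 0$ when $q \neq p$ and $P$ has order divisible by $q$ — more precisely $E[q] \not\subseteq E(\Q_{\infty,p})$, so $k = 0$. Then $E$ admits a rational $q^{j}$-isogeny with $j \geq n$, and Theorem \ref{thm-isog} forces $q^n \leq q^j \leq 19$ or $q^j \in \{21,25,27,37,43,67,163\}$; since $q^n$ is a prime power dividing $q^j$, running through the list of admissible prime-power isogeny degrees $q^j$ and taking all prime-power divisors yields exactly $q^n \in \{2,3,4,5,7,8,9,11,13,16,17,19,25,27,37,43,67,163\}$. (The value $32$ in the list does not arise this way and is presumably included because the statement is phrased for general odd $p$; see the remark below.)

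The one subtlety — and the main thing to get right — is the prime $q = 2$, which is genuinely present in the corollary (note $p$ is only assumed odd, so $2 \neq p$ and the Weil-pairing obstruction still applies to rule out $E[2^k] \subseteq E(\Q_{\infty,p})$ for $k$ large, but one must check small $k$ carefully: $E[2] \subseteq E(\Q_{\infty,p})$ would force $\Q(\zeta_2) = \Q \subseteq \Q_{\infty,p}$, which is vacuous, so $k$ could a priori be as large as $1$). If $k = 1$ then $E[2] \subseteq E(\Q_{\infty,p})$, and then $\langle P \rangle + E[2] \cong \Z/2\Z \oplus \Z/2^j\Z$ with $2^j \geq 2^{n}/2$ roughly, giving a rational $2^{j-1}$-isogeny; if $k = 0$ we get a rational $2^j$-isogeny with $j \geq n$. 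Either way Theorem \ref{thm-isog} bounds the isogeny degree, hence bounds $n$, and collecting the $2$-power divisors of admissible isogeny degrees contributes $\{2,4,8,16,32\}$ — in particular $2^5 = 32$, since a rational $64$-isogeny is allowed by nothing but a rational $2^j$-isogeny for $j \le ?$... here one must be careful that $32$ itself is not an allowed isogeny degree, so $k=1$ and $j = 5$ would require a rational $16$-isogeny (allowed, as $16 \le 19$) with $E[2] \subseteq E(\Q_{\infty,p})$, which is exactly why $32$ appears. I would write this $q=2$ analysis out explicitly. The odd-$q$ cases are then routine bookkeeping against Theorem \ref{thm-isog}.
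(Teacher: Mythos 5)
Your argument is essentially identical to the paper's: apply Lemma \ref{lem-j-k_isog} with $K=\Q$, $F=\Q_{\infty,p}$, use Corollary \ref{cor-weil} to see that $k=0$ for odd $q$ (giving a rational $q^n$-isogeny) and $k\le 1$ for $q=2$ (giving a rational $2^{n-1}$-isogeny), then read off the admissible prime powers from Theorem \ref{thm-isog} — which is exactly how $32$ enters. The only cosmetic difference is that the paper states the $q=2$ case in one line (``$E$ admits a rational $2^{n-1}$-isogeny'') where you spell out the $k\in\{0,1\}$ case analysis; your parenthetical mid-proof speculating that $32$ might not arise is resolved correctly by your own subsequent $q=2$ discussion, so there is no gap.
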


\begin{proof}
For $q\geq 3$ we have \(E[q] \nsubseteq E(\Q[p])\) by Corollary \ref{cor-weil}, so by Lemma \ref{lem-j-k_isog} we conclude that \(E\) admits a rational \(q^n\)-isogeny.

For $q= 2$, using the same reasoning as before, we conclude that $E$ admits a rational \(2^{n-1}\)-isogeny.

The result now follows from Theorem \ref{thm-isog}.
\end{proof}

\begin{tm}\label{exact_degree}\cite[Theorem 5.7]{gn}
Let $E/\Q$ be an elliptic curve, p a prime and $P$ a point of order $p$ on $E$. Then all of the cases in the table below occur for $p\leq 13$ or $p=37$, and they are the only ones possible.
\[\begin{array}{|c|c|}
\hline
p & [\Q(P):\Q]\\
\hline
2 & 1,2,3\\
\hline
3 & 1,2,3,4,6,8\\
\hline
5 & 1,2,4,5,8,10,16,20,24\\
\hline
7 & 1,2,3,6,7,9,12,14,18,21,24,36,42,48\\
\hline
11 & 5,10,20,{ 40},55,{ 80},100,110,120\\
\hline
13 & 3,4,6,12,{ 24},39,{ 48},52,72,78,96,{ 144},156,168\\

\hline
37 & 12,36,{ 72},444,{ 1296},1332, 1368\\
\hline
\end{array}\]
For all other $p$, for $[\Q(P):\Q]$ the following cases do occur:
\begin{enumerate}
\item \(p^2 - 1\), \hfill for all \(p\),
\item \(8,\ 16,\ 32,\ 136,\ 256,\ 272,\ 288\), \hfill for \(p = 17\),
\item \(\displaystyle \frac{p - 1}{2},\ p-1,\ \frac{p(p-1)}{2},\ p(p-1)\), \hfill if \(p \in \{19,43,67,163\}\),

\item \(2(p-1),\ (p-1)^2\), \hfill if \(p \equiv 1 \pmod{3}\) or \(\legendre{-D}{p} = 1\),\\
							\mbox{} \hfill for some \(D \in \{1,2,7,11,19,43,67,163\}\),
\item \(\displaystyle \frac{(p-1)^2}{3},\ \frac{2(p-1)^2}{3}\), \hfill if \(p \equiv 4, 7 \pmod{9}\),
\item \(\displaystyle \frac{p^2-1}{3},\ \frac{2(p^2 - 1)}{3}\), \hfill if \(p \equiv 2, 5 \pmod{9}\),
\end{enumerate}

Apart from the cases above that have been proven to appear, the only other options that might be possible are:
\[\frac{p^2 - 1}{3},\ \frac{2(p^2 - 1)}{3}, \quad \text{ for } p \equiv 8 \pmod{9}.\]
\end{tm}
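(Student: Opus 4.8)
The statement classifies, as $E$ ranges over all elliptic curves over $\Q$, which positive integers occur as $[\Q(P):\Q]$ for $P\in E$ of exact order $p$, and the plan is to translate this into a problem about subgroups of $\GL_2(\F_p)$ and then split by the size of the mod-$p$ image. Fixing a basis of $E[p]$, the field $\Q(P)$ is fixed by the stabiliser of $P$ in $G_{E,\Q}(p)=\rho_{E,p}(G_\Q)$, so $[\Q(P):\Q]$ is the length of the orbit of the nonzero vector attached to $P$ under $G_{E,\Q}(p)$ acting on $\F_p^2\setminus\{0\}$. Thus one needs: (a) the list of subgroups $H\leq\GL_2(\F_p)$ that occur as $\rho_{E,p}(G_\Q)$ for some $E/\Q$ (up to conjugacy, with $\det H=\F_p^\times$, and with $H$ containing the image of complex conjugation, an involution of determinant $-1$); and (b) for each such $H$, the orbit lengths of nonzero vectors. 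For $p\in\{2,3,5,7,11,13\}$ and $p=37$ the list in (a) is completely known (it rests on the determination of the rational points on the relevant modular curves $X_H$), so (b) is a finite computation group by group, and taking the union reproduces the displayed table.

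For the remaining primes I would use Dickson's description of the subgroups of $\GL_2(\F_p)$: $H$ either contains $\SL_2(\F_p)$, or lies in a Borel, or in the normaliser of a split Cartan $C_{\mathrm{sp}}^{+}$, or in the normaliser of a nonsplit Cartan $C_{\mathrm{ns}}^{+}$, or has exceptional projective image. The exceptional case does not occur for the primes under consideration. If $H\supseteq\SL_2(\F_p)$ then every nonzero vector has orbit $\F_p^2\setminus\{0\}$, contributing $p^2-1$, and this value occurs for every $p$ since there exist elliptic curves over $\Q$ with surjective mod-$p$ representation. If $H$ lies in a Borel then $E$ has a rational $p$-isogeny, so by Theorem~\ref{thm-isog} $p\in\{17,19,37,43,67,163\}$; the prime $p=17$ (where $X_0(17)$ carries non-CM rational points) is handled by a separate finite analysis and yields the extra values of item (2), while for $p\in\{19,43,67,163\}$ the only curves with a $p$-isogeny have CM by the class-number-one order of discriminant $-p$, in which $p$ ramifies. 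For such a curve $E[p]$ does not split and has a unique $G_\Q$-stable line $C$, the characters by which $G_\Q$ acts on $C$ and on $E[p]/C$ differ by the quadratic character of $\Q(\sqrt{-p})/\Q$, and the ramification at $p$ forces a nontrivial, hence full, unipotent part; running over the two quadratic twists one obtains exactly $\tfrac{p-1}{2},\,p-1,\,\tfrac{p(p-1)}{2},\,p(p-1)$, i.e.\ item (3).

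It remains to treat $H$ inside a Cartan normaliser. For a non-CM $E/\Q$ this forces a non-CM rational point on $X_{\mathrm{sp}}^{+}(p)$ or $X_{\mathrm{ns}}^{+}(p)$; for $X_{\mathrm{sp}}^{+}(p)$ these are only CM points when $p>13$ (Bilu--Parent--Rebolledo), and for $X_{\mathrm{ns}}^{+}(p)$ one sidesteps the (open) determination of rational points by noting that the Cartan part of the image of a non-CM curve over $\Q$ has index in the Cartan dividing $\#\Aut(E)=2$, hence is everything by the constraint $\det H=\F_p^\times$ — so a non-CM curve in this case again contributes only $p^2-1$. For a CM curve $E/\Q$ with CM by the class-number-one order of discriminant $-D$, the mod-$p$ image lies in $C_{\mathrm{sp}}^{+}$ when $p$ splits in $\Q(\sqrt{-D})$ and in $C_{\mathrm{ns}}^{+}$ when $p$ is inert; ranging also over twists, these arise exactly under the congruence conditions in the statement ($p\equiv1\pmod3$ for the order of discriminant $-3$, or $\legendre{-D}{p}=1$ for $D\in\{1,2,7,11,19,43,67,163\}$ for the remaining discriminants). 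When $D\neq 3$ the image is the full normaliser, giving $2(p-1)$ and $(p-1)^2$ in the split case (item (4)) and $p^2-1$ in the nonsplit case; the factor $2$ throughout comes from complex conjugation.

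The heart of the argument, and the source of the unresolved line of the statement, is the case $j=0$, i.e.\ the sextic twists of $y^2=x^3+1$: here the three extra automorphisms cut the Cartan part of the image down to an index-$3$ subgroup of the relevant normaliser. For $p\equiv1\pmod3$ this yields $\tfrac{(p-1)^2}{3},\tfrac{2(p-1)^2}{3}$ when $p\equiv4,7\pmod9$ (item (5)), whereas when $p\equiv1\pmod9$ the image is instead the full split Cartan normaliser and one recovers item (4); for $p\equiv2\pmod3$ it yields $\tfrac{p^2-1}{3},\tfrac{2(p^2-1)}{3}$ (item (6)). The hard part is to exhibit, for each such $p$, sextic twists over $\Q$ that actually realise each of these degrees: this reduces to a delicate computation with the mod-$p$ reduction of the CM Hecke character of $\Q(\sqrt{-3})$ and with cubic residue conditions, which can be pushed through when $p\equiv2,5\pmod9$ but is inconclusive when $p\equiv8\pmod9$ — so for those primes one can only record $\tfrac{p^2-1}{3}$ and $\tfrac{2(p^2-1)}{3}$ as possibilities, as in the final line.
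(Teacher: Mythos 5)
This statement is not proved in the paper at all: it is quoted verbatim as Theorem~5.7 of \cite{gn} and used as a black box, so there is no in-paper argument to compare against. Judged on its own merits your sketch has the right architecture — reduce $[\Q(P):\Q]$ to orbit lengths of $G_{E,\Q}(p)$ on $\F_p^2\setminus\{0\}$, use the explicit classification of mod-$p$ images for $p\le 13$ and $p=37$, and for larger $p$ run Dickson's dichotomy (contains $\SL_2$ / Borel / split or nonsplit Cartan normaliser / exceptional), invoking Theorem~\ref{thm-isog} for the Borel case and CM theory with the sextic twists of $j=0$ for the Cartan and residual cases. This is indeed the shape of the proof in \cite{gn}.

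The serious gap is in your treatment of the nonsplit Cartan normaliser for non-CM curves. You assert that ``the Cartan part of the image of a non-CM curve over $\Q$ has index in the Cartan dividing $\#\Aut(E)=2$, hence is everything by the constraint $\det H=\F_p^\times$.'' Neither half of this holds. For a non-CM curve $\Aut(E)=\{\pm 1\}$ is central, so it imposes no constraint whatsoever on the index of $\rho_{E,p}(G_\Q)\cap C_{\mathrm{ns}}$ inside $C_{\mathrm{ns}}$; the ``extra automorphisms cut down the Cartan'' mechanism is specific to CM curves with $j=0$ or $j=1728$ and does not transfer. And $\det$-surjectivity by itself does not force the Cartan part to be all of $C_{\mathrm{ns}}$: if $H\le C_{\mathrm{ns}}\simeq\F_{p^2}^\times$ has index $2$ then $\det H=N_{\F_{p^2}/\F_p}(H)$ is the index-$2$ subgroup of squares in $\F_p^\times$, and one can still have $\det(H\cup gH)=\F_p^\times$ by choosing $g\in N_{\mathrm{ns}}\setminus C_{\mathrm{ns}}$ (which has $\det g\notin N(H)$), giving an a priori admissible image with orbit length $(p^2-1)/2$. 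Ruling out such proper subgroups for non-CM curves is exactly the (open) uniformity problem for $X_{\mathrm{ns}}^+(p)$, and the actual argument in \cite{gn} has to handle this case with much more care than a one-line appeal to $\#\Aut(E)$; as written, your step (and hence the completeness claim in the last paragraph of the statement) does not go through.
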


\begin{tm}	\cite[Theorem 7.2.]{gn}\label{l-tors}
Let \(p\) be the smallest prime divisor of a positive integer \(d\) and let \(K/\Q\) be a number field of degree \(d\).
\begin{itemize}
\item If \(p \geq 11\), then \(E(K)_{\tors} = E(\Q)_{\tors}\).
\item If \(p = 7\), then \(E(K)[q^\infty] = E(\Q)[q^\infty]\), for all primes \(q \neq 7\).
\item If \(p = 5\), then \(E(K)[q^\infty] = E(\Q)[q^\infty]\), for all primes \(q \neq 5, 7, 11\).
\item If \(p = 3\), then \(E(K)[q^\infty] = E(\Q)[q^\infty]\), for all primes \(q \neq 2,3,5,7,11,13,19,43,67,163\).
\end{itemize}
\end{tm}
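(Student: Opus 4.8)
The plan is to reduce everything to a statement about a single prime $q$, since $E(K)_\tors=\bigoplus_q E(K)[q^\infty]$, and to show that $E(K)[q^\infty]\supsetneq E(\Q)[q^\infty]$ forces the least prime divisor $p$ of $d$ into a small set depending only on $q$. The starting point is a dichotomy: if $E(K)[q^\infty]\supsetneq E(\Q)[q^\infty]$, then either (A) $E(K)[q]\supsetneq E(\Q)[q]$, i.e.\ $E(K)$ contains a point of order $q$ not defined over $\Q$, or (B) $E(K)[q]=E(\Q)[q]$, in which case this group is nonzero, so $E(\Q)$ has a rational point of order $q$ and hence $q\in\{2,3,5,7\}$ by Mazur's theorem \cite{mazur2}.

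In case (A), let $P\in E(K)$ have order $q$; then $1<[\Q(P):\Q]\mid d$, so every prime divisor of $[\Q(P):\Q]$ is at least $p$. I would go through Theorem \ref{exact_degree} and, for each $q$, record $M_q$, the largest value taken by the least prime divisor of $m$ as $m$ runs over the possible degrees $>1$ of a point of order $q$ on an elliptic curve over $\Q$. One computes $M_7=7$; $M_q=5$ for $q\in\{5,11\}$; $M_q=3$ for $q\in\{2,3,13,19,43,67,163\}$; and $M_q=2$ for every remaining $q$ (this last because the ``generic'' degrees $q^2-1$, $2(q-1)$, $(q-1)^2$, $\tfrac{(q-1)^2}{3}$, $\tfrac{2(q-1)^2}{3}$, $\tfrac{q^2-1}{3}$, $\tfrac{2(q^2-1)}{3}$ of Theorem \ref{exact_degree} are all even). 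Hence case (A) can occur only when $p\le M_q$.

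In case (B), with $q\in\{2,3,5,7\}$, write $q^a$ for the exponent of $E(\Q)[q^\infty]$, so $a\ge1$. One first checks that there is a point $P\in E(K)\setminus E(\Q)$ of order $q^{a+1}$ with $qP\in E(\Q)$, so that $qP$ has order $q^a$: for odd $q$ the group $E(K)[q^\infty]$ is cyclic (its $q$-torsion is the cyclic group $E(\Q)[q]$), and for $q=2$ one uses $E[4]\not\subseteq E(K)$, which follows from the Weil pairing and $p\ge3$. Since $qP$ is rational, every $G_{\Q}$-conjugate of $\langle P\rangle$ contains the order-$q^a$ group $\langle qP\rangle$, so these conjugates inject into the set of order-$q$ subgroups of $E[q^{a+1}]/\langle qP\rangle\cong\Z/q^{a+1}\Z\oplus\Z/q\Z$, and there are only $q+1$ of the latter. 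If $\langle P\rangle$ has more than one conjugate, its stabiliser in $G_{\Q}$ cuts out a subfield of $\Q(P)\subseteq K$ of degree between $2$ and $q+1$, so $d$ has a prime divisor $\le q+1$. If instead $\langle P\rangle$ is $G_{\Q}$-stable, then $E$ admits a rational $q^{a+1}$-isogeny, on whose kernel $G_{\Q}$ acts through a character $\chi\colon G_{\Q}\to(\Z/q^{a+1}\Z)^\times$; rationality of $qP$ forces $\chi\equiv1\pmod{q^a}$, i.e.\ $\operatorname{im}\chi$ lies in the kernel of the reduction $(\Z/q^{a+1}\Z)^\times\to(\Z/q^a\Z)^\times$, which has order $q$ (or $2$ when $q=2$); hence $[\Q(P):\Q]=|\operatorname{im}\chi|$ divides $q$ (resp.\ $2$) and cannot be $1$ (else $P\in E(\Q)$), so $q\mid d$ (resp.\ $2\mid d$). (When $q=7$ this second alternative is vacuous anyway, as $49$ is not on the list of Theorem \ref{thm-isog}.) In every case $d$ has a prime divisor $\le q+1$, so case (B) requires $p\le q+1$, which for the primes $p\ge3$ at issue is the same condition as $p\le M_q$.

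Combining (A) and (B), $E(K)[q^\infty]\ne E(\Q)[q^\infty]$ forces $p\le M_q$; reading off, for each $p$, which $q$ this permits recovers exactly the four cases of the statement, and for $p\ge11$ no prime $q$ is allowed, so $E(K)_\tors=E(\Q)_\tors$. I expect the main obstacle to be case (B): one must confirm that $qP$ is genuinely $\Q$-rational of the right order, carry the argument on the subgroup $\langle P\rangle$ rather than on a point, and handle the extra bookkeeping for $q=2$, where $E(\Q)[2]$ may have rank $2$ and $E(K)[2^\infty]$ has the shape $\Z/2\Z\oplus\Z/2^c\Z$. Case (A), by contrast, is a finite inspection of the tables of Theorems \ref{exact_degree} and \ref{thm-isog}.
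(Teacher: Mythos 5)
This statement is imported from \cite[Theorem~7.2]{gn}; the paper states it as a black box and gives no proof, so there is no in-paper argument to compare against line by line. Assessed on its own terms, your reconstruction is sound and follows the route one would expect the source to take: decompose into $q$-primary parts, handle ``new mod-$q$ torsion'' via the degree table of Theorem~\ref{exact_degree} (your $M_q$ computation is correct, including checking that all the generic degrees for $p>13$, $p\ne 37$, are even, and that the unresolved $p\equiv 8\pmod 9$ possibilities $\tfrac{p^2-1}{3},\tfrac{2(p^2-1)}{3}$ are also even), and handle ``increased $q$-divisibility'' by first invoking Mazur to force $q\in\{2,3,5,7\}$ and then bounding the degree of the field cut out by the stabiliser of a cyclic subgroup $\langle P\rangle$ sitting over a rational point $qP$. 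Two small points worth tightening if you write this up: first, it is cleaner (and enough for the counting) to produce $P$ by taking any $Q\in E(K)[q^\infty]\setminus E(\Q)$ and replacing $Q$ by $q^{k-1}Q$ for $k$ minimal with $q^kQ\in E(\Q)$ --- this gives $P\notin E(\Q)$, $qP\in E(\Q)$, and $qP\ne 0$ because $E(K)[q]=E(\Q)[q]$, without needing to pin the order of $P$ to exactly $q^{a+1}$ (which takes a bit of care when $E(\Q)[2]\simeq(\Z/2\Z)^2$); second, when the stabiliser has index $m$ with $2\le m\le q+1$, what you actually get is a subfield of $K$ of degree $m$, so $m\mid d$ and hence $p\le m\le q+1$, which is exactly what you conclude but is worth saying explicitly. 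With those clarifications the argument is complete, and the case split together with the computed bounds $M_q$ (and $q+1$ for $q\in\{2,3,5,7\}$) reproduces the four bullet points of the statement.
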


We now prove a lemma that we will find useful.

\begin{lemma} \label{ciklicko_prosirenje}
Let $p$ and $q$ be prime numbers such that $q - 1 \nmid p$ and $p \nmid q - 1$. Let $K/\Q$ be a cyclic extension of degree $p$, and $P \in E$ a point of degree $q$. If $P\in E(K)$, then $P \in E(\Q)$.
\end{lemma}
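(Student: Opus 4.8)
The plan is to observe that $\Q(P)$ is a subfield of $K$, and since $K/\Q$ is cyclic of prime degree $p$ it has no proper intermediate fields; hence $[\Q(P):\Q]\in\{1,p\}$, and it suffices to exclude the case $[\Q(P):\Q]=p$. So I would assume for contradiction that $\Q(P)=K$ and examine the set of Galois conjugates $\{\sigma(P):\sigma\in G_\Q\}\subseteq E[q]$. A preliminary observation to record first: because $K/\Q$ is Galois and $P\in E(K)$, the stabilizer $G_K$ is normal in $G_\Q$, so every conjugate $\sigma(P)$ again lies in $E(K)$.

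I would then split into two cases. \emph{Case 1: every conjugate of $P$ lies in $\langle P\rangle$.} Then $\langle P\rangle\cong\Z/q\Z$ is $G_\Q$-stable, so $G_\Q$ acts on it through a character $\chi\colon G_\Q\to(\Z/q\Z)^\times$; since $P$ has exact order $q$, the kernel of $\chi$ is precisely $\{\sigma:\sigma(P)=P\}=G_{\Q(P)}$, so $\chi$ induces an embedding of $\Gal(\Q(P)/\Q)$ into $(\Z/q\Z)^\times$. Hence $p=[\Q(P):\Q]$ divides $q-1$, contradicting the hypothesis $p\nmid q-1$. \emph{Case 2: some conjugate $\sigma(P)$ is not a multiple of $P$.} Then $P$ and $\sigma(P)$ are $\F_q$-linearly independent, so the conjugates of $P$ generate all of $E[q]$; by the preliminary observation $E[q]\subseteq E(K)$, so Proposition~\ref{prop-weil} gives $\Q(\zeta_q)\subseteq K$, whence $q-1=[\Q(\zeta_q):\Q]$ divides $[K:\Q]=p$, contradicting the hypothesis $q-1\nmid p$ (note that $q=2$ is in any case already excluded by that hypothesis, since $1\mid p$). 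In both cases we reach a contradiction, so $[\Q(P):\Q]=1$, i.e. $P\in E(\Q)$.

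The argument is short and I do not anticipate a serious obstacle; the only real content is choosing the correct dichotomy — namely whether the conjugates of $P$ span a line or all of $E[q]$ — and being careful to invoke each of the two divisibility hypotheses exactly once, one in each branch.
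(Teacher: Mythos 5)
Your proof is correct and uses essentially the same two ingredients as the paper's argument: the Weil-pairing obstruction (via Proposition~\ref{prop-weil}) forces $E(K)[q]$ to be cyclic unless $q-1\mid p$, and an order-of-automorphism count in $(\Z/q\Z)^\times$ forces triviality of the Galois action on $\langle P\rangle$ unless $p\mid q-1$. The paper simply establishes $E(K)[q]\simeq\Z/q\Z$ up front (so your Case~2 never arises) rather than deferring it to a case split, but the content is the same.
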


\begin{proof}
If we assume that $\Q(\zeta_q) \subseteq K$, it follows that $q - 1 = [\Q(\zeta_q) : \Q] \mid [K : \Q] = p$, and that is impossible by the assumption that $q - 1 \nmid p$. Therefore, by Corollary \ref{cor-weil} we conclude that $E(K)[q]\simeq \Z/q\Z$.

Let us assume that there is $\sigma \in \Gal[K][\Q]$ such that \(P^\sigma \neq P\) (i.e.\ that \(P \notin E(\Q)\)). That means that there is some \(a \in \{2, 3, \dotsc, q - 1\}\) such that \(P^\sigma = aP\). Furthermore, we know that $\sigma^p = 1$, so
$$P = P^{\sigma^p} = a^pP,$$
which means that $a^p \equiv 1 \pmod{q}$, but there exists such an $a\in \{2, 3, \dotsc, q - 1\}$ if and only if $p \mid q - 1$ or $q - 1 \mid p$, which is a contradiction.
\end{proof}

The following lemma will tell us how far up the tower we have to go to find a point of order $n$, if such a point exists.

\begin{lemma}

Let $E/\Q$ be an elliptic curve and $P\in  E$ a point of order $n$ such that $\Q(P)/\Q$ is Galois and let $E(\Q(P))[n]\simeq \Z/n\Z$. Then $\Gal(\Q(P)/\Q)$ is isomorphic to a subgroup of $(\Z/n\Z)^\times$.
\end{lemma}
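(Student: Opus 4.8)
The plan is to realize $\Gal(\Q(P)/\Q)$ explicitly inside $\GL_2(\Z/n\Z)$ via the mod-$n$ Galois representation and then exploit the hypothesis that $E(\Q(P))[n]$ is cyclic. Concretely, choose a basis $\{P, R\}$ of $E[n]$ whose first vector is the given point $P$. For any $\sigma \in \Gal(\overline{\Q}/\Q)$ we have $\rho_{E,n}(\sigma) = \begin{pmatrix} a & b \\ c & d \end{pmatrix}$ acting on column vectors; the condition $\sigma \in \Gal(\overline\Q/\Q(P))$, i.e.\ $P^\sigma = P$, says precisely that the first column is $\binom{1}{0}$, so $a = 1$ and $c = 0$. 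Hence $\rho_{E,n}$ restricted to $G_{\Q(P)}$ lands in the group of matrices $\left\{ \begin{pmatrix} 1 & b \\ 0 & d \end{pmatrix} \right\}$.

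First I would pass to the quotient: $\Gal(\Q(P)/\Q)$ is the image $G_{E,\Q}(n)$ of $\rho_{E,n}$ on $G_\Q$ modulo the image of $G_{\Q(P)}$ (this uses that $\Q(P)/\Q$ is Galois, so $G_{\Q(P)}$ is normal and the quotient is the Galois group of $\Q(P)/\Q$ — more precisely $\Q(P)$ is the fixed field of the kernel of $\rho_{E,n}$ intersected with ... actually $\Q(P)$ is cut out by the stabilizer of $P$, and the Galois group is $G_{E,\Q}(n) / \mathrm{Stab}(P)$ where $\mathrm{Stab}(P)$ is the subgroup of upper-triangular-with-$1$ matrices that occur). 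So an element of $\Gal(\Q(P)/\Q)$ is represented by a matrix $M = \begin{pmatrix} a & b \\ c & d \end{pmatrix} \in G_{E,\Q}(n)$, well-defined up to left multiplication (or right multiplication, depending on conventions) by matrices of the form $\begin{pmatrix} 1 & * \\ 0 & * \end{pmatrix}$ lying in the image of $G_{\Q(P)}$.

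The key step is to show the map $\Gal(\Q(P)/\Q) \to (\Z/n\Z)^\times$ sending the class of $M$ to its $(1,1)$-entry $a$ is a well-defined injective homomorphism. For \emph{well-definedness} and that $a \in (\Z/n\Z)^\times$: since $E(\Q(P))[n] \simeq \Z/n\Z$ is generated by $P$, the full $n$-torsion is not rational over $\Q(P)$; combined with $\Q(P)/\Q$ Galois one deduces that the representative $M$ may be chosen so that the subgroup generated by $P$ is $\Gal$-stable in the appropriate sense — the cyclicity of $E(\Q(P))[n]$ forces the action on the line $\langle P\rangle$ to be the only source of the $(1,1)$-entry, and quotienting by the $G_{\Q(P)}$-image (which has trivial $(1,1)$-entry) makes $a$ well-defined. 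That $a$ is a unit is automatic: $M \in \GL_2(\Z/n\Z)$ has $\det M \in (\Z/n\Z)^\times$, and since the ambiguity subgroup has $(1,1)$-entry $1$... one needs a small argument that $a$ itself, not just $\det M$, is a unit — this follows because $a \bmod \ell$ must be nonzero for every prime $\ell \mid n$, as otherwise $\ell P$ would be fixed by too large a subgroup, contradicting cyclicity of $E(\Q(P))[n]$ at $\ell$. For \emph{injectivity}: if $a = 1$ then $M$ is in the ambiguity subgroup, so its class is trivial. \emph{Homomorphism}: matrix multiplication multiplies $(1,1)$-entries modulo lower-order contributions, and the $c = 0$ structure of the relevant matrices (forced by cyclicity, as above) makes $(MM')_{11} = M_{11}M'_{11}$.

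I expect the \textbf{main obstacle} to be pinning down exactly how the cyclicity hypothesis $E(\Q(P))[n] \simeq \Z/n\Z$ controls the shape of the matrices — specifically, ruling out that some $\sigma$ fixing $P$ nonetheless contributes a nonzero lower-left entry at some prime $\ell \mid n$, which is what one needs to force the product formula $(MM')_{11} = M_{11} M'_{11}$ and the unit property. A clean way around this is to work prime-by-prime on the prime divisors $\ell$ of $n$ and then reassemble via CRT: reduce $P$ modulo $\ell$ to get a point $P_\ell$ of order $\ell$ with $E(\Q(P_\ell))[\ell]$ cyclic (since $\Q(P_\ell) \subseteq \Q(P)$), show $\Gal(\Q(P_\ell)/\Q) \hookrightarrow \F_\ell^\times$ by the $2\times 2$ argument above (here the $(1,1)$-entry is literally the scalar by which $\sigma$ acts on the line $\langle P_\ell\rangle$, and the homomorphism property is transparent), and then handle prime powers $\ell^k \| n$ by an analogous argument with $P$ reduced mod $\ell^k$, noting that the stabilizer of a point of exact order $\ell^k$ generating the $\ell^k$-torsion inside $E(\Q(P))$ consists of matrices acting as $1$ on $\langle P \bmod \ell^k\rangle$, so the Galois action is by multiplication on that cyclic group and thus factors through $(\Z/\ell^k\Z)^\times$. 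Assembling all primes gives the embedding into $\prod_{\ell^k \| n}(\Z/\ell^k\Z)^\times \simeq (\Z/n\Z)^\times$.
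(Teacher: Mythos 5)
Your core idea is the same as the paper's, but the $\GL_2$ detour obscures it and introduces a sub-argument that does not quite work. The whole lemma hinges on one observation: since $\Q(P)/\Q$ is Galois, for any $\sigma\in G_\Q$ the point $P^\sigma$ lies in $E(\Q(P))$, and since $|P^\sigma|=|P|=n$ it lies in $E(\Q(P))[n]=\langle P\rangle$; thus $P^\sigma = aP$ with $aP$ of order $n$, i.e.\ $a\in(\Z/n\Z)^\times$. The faithful action of $\Gal(\Q(P)/\Q)$ on $\langle P\rangle$ then gives the embedding immediately. The paper's proof is exactly this two-line argument.

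In your version, the claim that ``$c=0$'' for the matrix of every $\sigma$ is precisely the statement $P^\sigma\in\langle P\rangle$, and you correctly identify this as the crux, but you present it as something that ``one deduces'' or that a ``representative $M$ may be chosen'' to satisfy — it is not a choice, it is forced outright by Galois-stability of $\Q(P)$ plus cyclicity of $E(\Q(P))[n]$. More importantly, the argument you give for $a$ being a unit — that $a\bmod\ell$ nonzero ``as otherwise $\ell P$ would be fixed by too large a subgroup'' — is not correct and is in any case unnecessary: once $P^\sigma=aP$, the fact that $P^\sigma$ has exact order $n$ immediately forces $a\in(\Z/n\Z)^\times$, with no appeal to $\ell P$. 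You also try to argue the unit property before establishing $c=0$, which cannot work, since with $c\neq 0$ one could have $|P^\sigma|=n$ with $\ell\mid a$. Finally, your CRT reformulation implicitly assumes $\Q(P_\ell)/\Q$ is Galois; this is true, but the cleanest way to see it is again via the level-$n$ argument (all of $G_\Q$ acts on $\langle P\rangle$ by unit scalars, hence on $\langle P_\ell\rangle$ too, so $\Q(P_\ell)$ is the fixed field of a kernel of a homomorphism to an abelian group), so the reduction buys nothing. The conclusion is sound, but I'd encourage you to strip away the matrix bookkeeping and state the one-line argument directly.
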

\begin{proof}
We see that $G_{\Q}$ acts on $P$ through $G:=\Gal(\Q(P)/\Q)$ so that for any $\sigma \in G_{\Q}$ we have $P^\sigma=aP$, for some $a\in (\Z/n\Z)^\times$, since $|P^\sigma|=|P|.$ Since $G$ acts faithfully on $\diam{P}$, this implies that $G$ is isomorphic to a subgroup of $(\Z/n\Z)^\times$.
\end{proof}

We immediately obtain the following corollary.

\begin{corollary}
\label{kojepolje}
Let $P\in  E$ be a point of order $n$ such that $\Q(P)\subseteq \Q_{\infty,p}$. Then $\Q(P)\subseteq \Q_{m,p}$, where $m=v_p(\phi(n))$.
\end{corollary}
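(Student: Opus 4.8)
The plan is to recognise $\Q(P)$ as one of the layers of $\Q[p]$ and then feed it into the lemma above. First I would record the structure of the tower: the subfields of $\Q[p]$ correspond to the closed subgroups of $\Gal[\Q[p]][\Q]\simeq\Z_p$, which are $\Z_p$ itself and the $p^k\Z_p$ for $k\ge 0$; hence the only subfields of $\Q[p]$ that are finite over $\Q$ are the layers $\Q[k][p]$, and these are totally ordered, with $\Q[k][p]\subseteq\Q[k'][p]$ precisely when $k\le k'$. Since $\Q(P)$ is finite over $\Q$ and contained in $\Q[p]$, it must equal $\Q[k][p]$ for some $k\ge 0$; in particular $\Q(P)/\Q$ is Galois and $\Gal(\Q(P)/\Q)\simeq\Z/p^k\Z$.

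Next I would apply the lemma above. Its hypothesis that $\Q(P)/\Q$ be Galois is now automatic, and with $E(\Q(P))[n]\simeq\Z/n\Z$ the lemma produces an injective homomorphism $\Gal(\Q(P)/\Q)\hookrightarrow(\Z/n\Z)^\times$. Comparing orders then gives that $p^k=\lvert\Gal(\Q(P)/\Q)\rvert$ divides $\lvert(\Z/n\Z)^\times\rvert=\phi(n)$, so $k\le v_p(\phi(n))=m$. By the nesting of the layers noted above, $\Q(P)=\Q[k][p]\subseteq\Q[m][p]$, which is the assertion.

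The step I expect to be the main obstacle is the verification that the lemma genuinely applies, i.e.\ that $E(\Q(P))[n]$ is cyclic and generated by $P$ — this is exactly what guarantees that every $\sigma\in\Gal(\Q(P)/\Q)$ sends $P$ back into $\diam{P}$ and therefore acts on $\diam{P}$ as multiplication by a unit modulo $n$. For the prime-to-$2$ part of $n$ this is immediate from Corollary \ref{cor-weil}, which forces $E(\Q[p])[q]\simeq\Z/q\Z$ for every odd prime $q$ and hence makes the odd part of $E(\Q(P))_\tors$ cyclic; the $2$-primary part is the delicate case, and it is here that one must appeal to the specific torsion information available for the curve under consideration. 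Once the lemma applies, everything that remains is Lagrange's theorem together with the elementary arithmetic of $\Z_p$-extensions.
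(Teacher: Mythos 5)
Your argument is exactly the one the paper intends: the paper declares the corollary ``immediate'' from the preceding lemma, and your three-step skeleton --- identify $\Q(P)$ as a layer $\Q[k][p]$, apply the lemma to embed $\Gal(\Q(P)/\Q)\simeq\Z/p^k\Z$ into $(\Z/n\Z)^\times$, then conclude by Lagrange and the nesting of layers --- is precisely that implicit proof. Your caveat about the lemma's hypothesis $E(\Q(P))[n]\simeq\Z/n\Z$ is not merely a caution but an essential observation: the corollary as literally stated does not carry this hypothesis and is in fact false without it. Concretely, take $p=3$ and $n=2$: then $m=v_3(\phi(2))=0$, so the corollary would force any point of order $2$ defined over $\Q_{\infty,3}$ to lie in $E(\Q)$; yet the paper itself exhibits curves (e.g.\ \texttt{324a2} in Table~\ref{Q3examples}) whose full $2$-torsion becomes rational only over $\Q[1][3]$. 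The failure is exactly the one you anticipate --- when $E[2]\subseteq E(\Q(P))$ the subgroup $\diam{P}$ need not be $G_\Q$-stable, so the lemma yields no embedding into $(\Z/n\Z)^\times$ --- and your observation that the odd-primary part is safe (by Corollary \ref{cor-weil}, since $\Q_{\infty,p}$ is totally real) while the $2$-primary part requires curve-specific information is the correct diagnosis. In the paper's applications of the corollary the cyclicity hypothesis always does hold, so the uses are sound, but you are right that it should be verified (or stated) rather than left tacit.
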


\begin{proposition}
\label{prop_pdiv}
Let $E/F$ be an elliptic curve  over a number field $F$, $n$ a positive integer, $P \in E$ be a point of order $p^{n+1}$ such that $E(F(pP))$ has no points of order $p^{n+1}$ and such that $F(P)/F(pP)$ is Galois. Then $[F(P):F(pP)]$ divides $p^2$.
\end{proposition}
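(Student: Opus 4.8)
The plan is to write $L:=F(pP)$ and $G:=\Gal(F(P)/L)$, and to exploit that $F(P)=L(P)$. First I would note that every $\sigma\in G$ fixes $pP$, so $p\bigl(\sigma(P)-P\bigr)=O$, i.e. $c(\sigma):=\sigma(P)-P$ lies in $E[p]$; moreover $\sigma\mapsto\sigma(P)$ is injective (an element fixing $P$ fixes $L(P)=F(P)$), hence $c\colon G\to E[p]$ is injective and $\#G\le\#E[p]=p^2$. Thus it suffices to prove that $G$ is a $p$-group, for then $\#G\in\{1,p,p^2\}$, each of which divides $p^2$.

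Next I would clear the easy cases. Put $V:=E[p]\cap E(F(P))$ and $W:=\langle p^nP\rangle\subseteq V$; since $p^nP=p^{n-1}(pP)$ we have $W\subseteq E(L)$. If $V=W$ then $G$ acts trivially on $V$, so $c$ is an injective homomorphism into $W\cong\Z/p\Z$ and $\#G\mid p$; if $E[p]\subseteq E(L)$ then $G$ acts trivially on $V=E[p]$, so $c$ is an injective homomorphism into $E[p]$ and $\#G\mid p^2$. Hence I may assume $E[p]\subseteq E(F(P))$ but $E[p]\not\subseteq E(L)$, so $V=E[p]$ and $E(L)\cap E[p]=W$. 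In this situation the cocycle $c$ is not a coboundary: if $c(\sigma)=(\sigma-1)v$ for a fixed $v\in E[p]$ then $P-v$ is $G$-invariant, hence lies in $E(L)$, and $p^n(P-v)=p^nP\ne O$ makes it a point of order $p^{n+1}$ in $E(L)$, contradicting the hypothesis.

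Now I would argue by contradiction: suppose $G$ is not a $p$-group and choose, by Cauchy's theorem, $\sigma\in G$ of prime order $\ell\ne p$. Since $\sigma$ fixes $p^nP$, and $R:=c(\sigma)$ is nonzero and not in $W$ (otherwise $\sigma$ would act on $\langle P\rangle$ by multiplication by an element of $1+p^n\Z$, which has $p$-power order, forcing $\sigma=1$), the set $\{p^nP,R\}$ is a basis of $E[p]$; in this basis $\sigma|_{E[p]}=\left(\begin{smallmatrix}1&b\\0&d\end{smallmatrix}\right)$, and expanding $\sigma^\ell(P)=P$ (using $R\ne O$, $\ell\nmid p$) forces $b=0$ and $d$ to be a primitive $\ell$-th root of unity in $\F_p^\times$, so in particular $\ell\mid p-1$. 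Then $P_0:=P+(1-d)^{-1}R$ satisfies $\sigma(P_0)=P_0$, $pP_0=pP$, and $p^nP_0=p^nP\ne O$, so $P_0$ is a point of order $p^{n+1}$ fixed by $\sigma$; if $\langle\sigma\rangle=G$ this already contradicts the hypothesis, and the remaining task is to descend such a point to $L$ in general.

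For this I would use that $G$ is solvable: it acts faithfully on $\langle P\rangle+E[p]\cong\Z/p^{n+1}\Z\oplus\Z/p\Z$, and the natural map $\Aut(\Z/p^{n+1}\Z\oplus\Z/p\Z)\to\GL_2(\F_p)$ on the $p$-torsion subgroup has image in a Borel subgroup and kernel a $p$-group. Taking a Hall $p'$-subgroup $H\le G$, the vanishing $H^1(H,E[p])=0$ (as $\#H$ is prime to $p$ and $E[p]$ is a $p$-group) shows $c|_H$ is a coboundary, $c(\tau)=(\tau-1)v$ for $\tau\in H$; then $P-v$ is a point of order $p^{n+1}$ fixed by $H$, lying over the same point $pP$, with $[F(P-v):L]$ strictly smaller than $\#G$. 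Feeding this into an induction on $[F(P):F(pP)]$ should force $H=1$, i.e. $G$ a $p$-group. I expect the genuine obstacle to be precisely this last step: since $F(P)^H/L$ need not be Galois, turning ``$E(F(P)^H)$ has a point of order $p^{n+1}$'' into a contradiction with ``$E(F(pP))$ has none'' (rather than merely relocating the problem to a larger field) requires extra care — for instance arranging $\sigma$ to lie in a normal subgroup, iterating the $P_0$-construction, or averaging over the $L$-conjugates of $P_0$.
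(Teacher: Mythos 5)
Your approach — building the injective cocycle $c\colon G\to E[p]$, $c(\sigma)=\sigma(P)-P$, to get $\#G\le p^2$, disposing of the cases where $c$ is a homomorphism, and trying to kill the prime-to-$p$ part of $G$ via $H^1$-vanishing — is genuinely different from the paper's, which instead counts $G$-orbits on the set of solutions of $pX=pP$. Up to the step you flag, your reasoning is correct, and the single-$\sigma$ computation ($b=0$, $d$ a primitive $\ell$-th root, $P_0=P+(1-d)^{-1}R$ fixed) is right. But the gap you identify at the end is real and, I believe, cannot be patched, because the intermediate claim (that $G$ must be a $p$-group) is false and so is the proposition as literally stated. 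Here is a group-theoretic obstruction: take $p=5$, $n=1$, $P=(1,0)$, $Q=(5,0)$ in $(\Z/25\Z)^2$, and $\gamma=\left(\begin{smallmatrix}6&0\\5&-1\end{smallmatrix}\right)\in\GL_2(\Z/25\Z)$. One checks $\gamma$ has order $10$, $\gamma$ fixes $Q$, the fixed submodule of $\Gamma:=\langle\gamma\rangle$ is exactly $\langle Q\rangle$ (so no point of order $25$ is fixed), and $\gamma^kP\ne P$ for $1\le k\le 9$. For any $E/F$ with $\rho_{E,25}(G_F)=\Gamma$ (realizable over a suitable number field $F$, since $\det\Gamma$ is just an index-$2$ subgroup of $(\Z/25\Z)^\times$, which can be the image of the cyclotomic character), we get $pP=Q\in E(F)=E(F(pP))$ with no $F$-rational point of order $25$, $F(P)/F$ Galois of degree $10$, and $10\nmid 25$. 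This is precisely the configuration your argument relocates rather than refutes: the Hall $2$-subgroup $H=\langle\gamma^5\rangle$ fixes a point of order $25$, but only over the quintic field $F(P)^H\supsetneq F$.

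The saving grace is that the paper only ever invokes this proposition when $F(P)/F(pP)$ lives inside a $\Z_\ell$-tower with $\ell\ne p$, so that $[F(P):F(pP)]$ is automatically a power of $\ell$ and in particular coprime to $p$. In that regime your argument is complete and clean: $G$ itself is a $p'$-group, so $H^1(G,E[p])=0$ makes $c$ a coboundary, $P-v$ is a $G$-fixed point of order $p^{n+1}$ in $E(F(pP))$, and hence $G=1$. (The paper's own orbit-count proof has a parallel soft spot: the assertion that all $G$-orbits of solutions defined over $F(P)$ have the same length is only justified for the orbits meeting the multiples $aP$, $a\equiv1\bmod p^n$, not for all of them.) So the right repair, for you and for the paper, is to add the hypothesis $\gcd([F(P):F(pP)],p)=1$ and strengthen the conclusion to $F(P)=F(pP)$; this is exactly what the applications use, and your $H^1$ argument proves it in one line.
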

\begin{proof}
Let $Q:=pP$, and consider the equation
 \begin{equation}
pX=Q. \label{eq1}
\end{equation}
So what we're looking for are all the possibilities for $[K(P):K(Q)]$ where $P$ is some solution of \eqref{eq1}.  This degree is the same as the length of the orbit of $P$ under the action of $G:=\Gal(F(P)/F(pP))$ on the solutions of \eqref{eq1}. The $p^2$ solutions of \eqref{eq1} should decompose into orbits with the restriction that if $P$ is defined over a certain field, then so are all multiples of $P$ - so $P$ and all the multiples of P should be in orbits of the same length. If $P$ comes in an orbit of length $n$, and there are $x$ orbits of length $n$ such that all the elements inside are defined over $F(P)$ (and since $F(P)/F(pP)$ is Galois, if one element inside an orbit is defined over $F(P)$, then all of them are), then we have $n\cdot x=p$ or $p^2$ (as the number of solutions \eqref{eq1} is in bijection with $E(F(P))[p]$). Hence $n$ will have to be either $p$ or $p^2$, proving the proposition.
\end{proof}

\begin{remark}
Proposition \ref{prop_pdiv} is a version of \cite[Proposition 4.6.]{gn} with stronger assumptions.
\end{remark}

\section{Proof of Theorem \ref{bigpresult}}

For primes \(p \geq 11\), by Theorem \ref{l-tors} we know that \(E(\Q[n][p])_\tors = E(\Q)_\tors\), for each positive integer \(n\). It follows that \(E(\Q[p])_\tors = E(\Q)_\tors\). It remains to prove this fact for the cases \(p = 7\) and \(p = 5\).

\begin{tm}
$E(\Q[7])_{\tors} = E(\Q)_{\tors}$.
\end{tm}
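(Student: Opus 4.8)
The plan is to decompose $E(\Q[7])_\tors$ into its $q$-primary components and show that each one already coincides with the corresponding component of $E(\Q)_\tors$.

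For every prime $q\neq 7$ this is immediate from Theorem~\ref{l-tors}: the $n$\textsuperscript{th} layer $\Q[n][7]$ is a number field of degree $7^n$ whose smallest prime divisor is $7$, so $E(\Q[n][7])[q^\infty]=E(\Q)[q^\infty]$ for every $n$, and passing to the union over $n$ gives $E(\Q[7])[q^\infty]=E(\Q)[q^\infty]$. The entire remaining work is therefore the $7$-primary part, which Theorem~\ref{l-tors} deliberately excludes.

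For the $7$-part I would argue as follows. First, $E[7]\not\subseteq E(\Q[7])$ by Corollary~\ref{cor-weil}, so the abelian $7$-group $E(\Q[7])[7^\infty]\subseteq(\Q_7/\Z_7)^2$ cannot contain $(\Z/7\Z)^2$ and hence is cyclic. Next, since $7$ is odd, Corollary~\ref{torsiondestroyer} applies: a nontrivial point of order $7^n$ in $E(\Q[7])$ would force $7^n$ to lie in the listed set of prime powers, and the only power of $7$ there is $7$ itself (ultimately because a rational $49$-isogeny is excluded by Theorem~\ref{thm-isog}). Thus $E(\Q[7])[7^\infty]$ is either trivial or isomorphic to $\Z/7\Z$. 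In the latter case pick a point $P$ of order $7$ in $E(\Q[7])$; then $\Q(P)\subseteq\Q[7]$, so Corollary~\ref{kojepolje} places $\Q(P)$ inside $\Q[m][7]$ with $m=v_7(\phi(7))=v_7(6)=0$, i.e.\ $P\in E(\Q)$. In both cases $E(\Q[7])[7^\infty]=E(\Q)[7^\infty]$, and combined with the previous paragraph this gives $E(\Q[7])_\tors=E(\Q)_\tors$.

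Since every ingredient is already available, there is no genuinely hard step; the only point that needs care is that the cyclotomic comparison underlying Theorem~\ref{l-tors} breaks down at $q=7$, so the $7$-part must be controlled by hand — through the Weil-pairing obstruction that forbids full $7$-torsion, the classification of rational isogenies that forbids $49$-torsion, and the numerical coincidence $7\nmid\phi(7)$, which prevents a new rational $7$-torsion point from being acquired inside the $7$-part of the cyclotomic tower.
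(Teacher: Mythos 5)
Your proof is correct and follows essentially the same structure as the paper's: Theorem~\ref{l-tors} handles every prime $q\neq 7$, Corollary~\ref{torsiondestroyer} rules out $49$-torsion, and a Galois-action argument forces a hypothetical point of order $7$ down to $\Q$. The only variation is in that last step: you apply Corollary~\ref{kojepolje}, which immediately gives $\Q(P)\subseteq\Q[0][7]=\Q$ because $v_7(\phi(7))=0$, whereas the paper first uses Theorem~\ref{exact_degree} to bound $[\Q(P):\Q]$ by $48<49$, concludes $P\in E(\Q[1][7])$, and then invokes Lemma~\ref{ciklicko_prosirenje}. Both routes come down to the same observation, that $\Gal(\Q(P)/\Q)$ is simultaneously a $7$-group and a subgroup of $(\Z/7\Z)^\times\simeq\Z/6\Z$, hence trivial; your version is a touch more economical since it sidesteps the explicit degree list in Theorem~\ref{exact_degree}.
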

\begin{proof}
From Theorem \ref{l-tors}, we immediately conclude that $E(\Q[7])[q^\infty]=E(\Q)[q^\infty]$ for all primes $q\neq  7$. It remains to prove that $E(\Q[7])[7^\infty]=E(\Q)[7^\infty]$.

By Corollary \ref{torsiondestroyer} we conclude that there is no $49$-torsion in $E(\Q[7])$, so it remains to prove that $E(\Q[7])[7] = E(\Q)[7]$.

Let $P\in E(\Q[7])$ be a point of order $7$. By Theorem \ref{exact_degree}, $P$ is defined over some field of degree at most $7^2 - 1$. Therefore, $P \in E(\Q[1][7])$. From Lemma \ref{ciklicko_prosirenje} it now follows that $P \in E(\Q)$ and we are done.
\end{proof}

\begin{lemma} \label{nad5nema11}
$E(\Q[5])[11^\infty] = \{O\}$.
\end{lemma}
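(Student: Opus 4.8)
The goal is to rule out any $11$-power torsion in $E(\Q_{\infty,5})$. First I would record what Corollary \ref{torsiondestroyer} already gives us: a point of order $q^n$ in $E(\Q_{\infty,5})$ forces $q^n$ to lie in the short list, so for $q=11$ the only possibility is $n=1$, i.e.\ it suffices to show $E(\Q_{\infty,5})[11]=\{O\}$. Suppose for contradiction that $P\in E(\Q_{\infty,5})$ has order $11$.

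\emph{Locating the point in the tower.} By Corollary \ref{cor-weil} we have $E(\Q_{\infty,5})[11]\simeq \Z/11\Z$, so $\Q(P)$ is a Galois extension of $\Q$ (it is the fixed field of the kernel of the character giving the Galois action on $\diam{P}$). By Corollary \ref{kojepolje}, $\Q(P)\subseteq \Q_{m,5}$ with $m=v_5(\phi(11))=v_5(10)=1$; hence $\Q(P)\subseteq \Q_{1,5}$, a cyclic extension of $\Q$ of degree $5$. In particular $[\Q(P):\Q]\in\{1,5\}$.

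\emph{Eliminating the two cases.} If $[\Q(P):\Q]=1$ then $P\in E(\Q)$, but $E$ has no rational $11$-torsion by Mazur's theorem (or by Theorem \ref{exact_degree}, whose $p=11$ row starts at degree $5$), a contradiction. If $[\Q(P):\Q]=5$, then a point of order $11$ is defined over a number field of degree $5$; but the $p=11$ row of Theorem \ref{exact_degree} lists the possible degrees $[\Q(P):\Q]$ as $5,10,20,40,55,80,100,110,120$ --- so degree $5$ is \emph{a priori} allowed by that table, and I cannot close the case on degree grounds alone. Instead I invoke Lemma \ref{ciklicko_prosirenje} with $p=5$, $q=11$: here $q-1=10\nmid 5=p$ and $p=5\nmid 10=q-1$ fails --- indeed $5\mid 10$ --- so Lemma \ref{ciklicko_prosirenje} does \emph{not} apply directly either. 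This is the crux.

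\emph{The main obstacle and how to get around it.} The genuine difficulty is exactly that $5\mid 11-1$, so a degree-$5$ cyclic field \emph{could} carry a point of order $11$ as far as the cyclotomic obstruction is concerned; the Weil-pairing argument of Lemma \ref{ciklicko_prosirenje} breaks down. What saves us is that such a field would have to be a very specific one: if $P\in E(\Q_{1,5})$ has order $11$ and $[\Q(P):\Q]=5$, then $\Q(P)=\Q_{1,5}$, and the Galois action of $\Gal(\Q_{1,5}/\Q)\simeq \Z/5\Z$ on $\diam{P}$ is a faithful character $\Z/5\Z\hookrightarrow (\Z/11\Z)^\times$, i.e.\ $E$ acquires a rational $11$-isogeny over $\Q_{1,5}$ whose kernel character has order $5$. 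The plan is to push this to a statement over $\Q$: the curve $E$ (base-changed to $\Q_{1,5}$, or better, $E$ itself together with the $\Gal(\overline\Q/\Q)$-action) has mod-$11$ image lying in the normalizer of a split/nonsplit Cartan or in a Borel in a way constrained by the fact that $\Q_{1,5}$ is ramified only at $5$. Concretely, I would argue that $E/\Q$ then admits a rational $11$-isogeny: the subgroup $\diam{P}$ of order $11$ need not be $\Gal(\overline\Q/\Q)$-stable, but $\Gal(\overline\Q/\Q)$ permutes the finitely many Galois-conjugate order-$11$ subgroups, and one analyzes the possible images in $\GL_2(\F_{11})$ whose intersection with $\GL_2(\F_{11})$ acting on $\Q_{1,5}$-points is a group of order divisible by $11$ but with the $11$-part becoming trivial after a $\Z/5\Z$-extension. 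Using the classification of mod-$11$ images of elliptic curves over $\Q$ (the modular curves $X_0(11)$, $X_{\mathrm{sp}}(11)$, $X_{\mathrm{ns}}(11)$, $X_{s}^+(11)$, etc., all of which have only finitely many or no non-cuspidal non-CM rational points), one checks none of them produces a point of exact order $11$ over the degree-$5$ subfield of $\Q(\zeta_{11})$ --- equivalently, one rules it out by noting $X_1(11)$ has genus $1$ with Mordell--Weil rank $0$ over $\Q$ and tracking its points over $\Q_{1,5}$. I expect the clean way the authors do this is to reduce to the (already tabulated) fact that the only way an $11$-torsion point appears in degree $5$ is via the curves $\mathbf{121}$-something with CM, handled by a direct check, or simply: degree $5$ forces $\Q(P)=\Q_{1,5}\subseteq\Q(\zeta_{11})$, and then $\Q(\zeta_{11})$ contains $\Q(\zeta_{11})$, so by Proposition \ref{prop-weil} applied after adjoining the Weil pairing value one derives a contradiction with $E(\Q_{\infty,5})$ containing only the roots of unity $\pm 1$ unless $P$ was already rational. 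I would write the argument along these last lines, with the mod-$11$ image classification as the fallback.
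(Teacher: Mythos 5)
Your reduction is exactly the paper's: Corollary~\ref{torsiondestroyer} kills $121$-torsion, and the degree table in Theorem~\ref{exact_degree} (or Corollary~\ref{kojepolje}) forces a putative point $P$ of order $11$ in $E(\Q_{\infty,5})$ to lie in $E(\Q_{1,5})$. You also correctly identify why the soft argument breaks down: $5\mid 11-1$, so Lemma~\ref{ciklicko_prosirenje} does not apply, and a cyclic quintic field genuinely could carry a point of order $11$ as far as character-theoretic constraints go.

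Where the proposal goes wrong is the finish. The paper closes the gap by a direct Magma computation: $X_1(11)$ is the elliptic curve $y^2+y=x^3-x^2$, and one computes that $X_1(11)(\Q_{1,5})$ has rank $0$ with torsion $\Z/5\Z$, all of it cuspidal, so no elliptic curve over $\Q_{1,5}$ (in particular none coming from $\Q$) has a point of order $11$. You gesture at this (``tracking its points over $\Q_{1,5}$''), but you do not commit to it, and the alternatives you hedge with are incorrect. In particular: (1) the assertion $\Q_{1,5}\subseteq\Q(\zeta_{11})$ is false --- $\Q_{1,5}$ is the quintic subfield of $\Q(\zeta_{25})$, ramified only at $5$, whereas the quintic subfield of $\Q(\zeta_{11})$ is ramified at $11$; these are distinct cyclic quintic fields and the coincidence of Galois groups buys nothing. (2) The Weil-pairing obstruction (Proposition~\ref{prop-weil}) only applies when the full $E[11]$ is rational over the field; a single order-$11$ point gives no cyclotomic constraint, which is precisely why Lemma~\ref{ciklicko_prosirenje} was the right tool when it applied and why its failure is a real gap rather than a cosmetic one. (3) Knowing $X_1(11)$ has rank $0$ over $\Q$ is not enough: you need rank $0$ over $\Q_{1,5}$ itself, plus control of the torsion over $\Q_{1,5}$, which is exactly the computation the paper carries out. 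So the proof as written does not close; you need to actually perform (or cite) the determination of $X_1(11)(\Q_{1,5})$.
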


\begin{proof}
Again by Corollary \ref{torsiondestroyer} we conclude that there is no $121$-torsion in $E(\Q[5])$. It remains to prove that $E(\Q[5])[11] = \{O\}$.

Let $P\in E(\Q[5])$ be a point of order $11$. From Theorem \ref{exact_degree} we conclude that \(P \in E(\Q[1][5])\). The modular curve $X_1(11)$ is the elliptic curve 
\[y^2 + y = x^3 - x^2.\]
We can easily compute (using Magma \cite{magma}) that \(X_1(11)\) has rank \(0\) and torsion \(\Z/5\Z\) over \(\Q[1][5]\), and all the torsion points are cusps, so there are no elliptic curves with $11$-torsion over \(\Q[1][5]\).
\end{proof}

Before proving $E(\Q[5])[5^\infty] = E(\Q)[5^\infty]$ we will need some technical results.

\begin{tm}\cite[Theorem 2]{greenberg}\label{Autrhoprop}
The index \([\Aut_{\Z_5}(T_5(E)) : im(\overline{\rho}_{5, E})]\) isn't divisible by \(25\).
\end{tm}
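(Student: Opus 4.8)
The theorem is of \emph{propagation} type: for $5 \ge 5$ the size of $\mathrm{im}(\overline{\rho}_{5,E})$ inside $\GL_2(\Z_5)$ is essentially forced by its reduction modulo $5$, and the only genuinely delicate case is the one in which $E$ admits a rational $5$-isogeny --- which is exactly the setting of Greenberg's paper. Here is the plan. We may assume $E$ has no complex multiplication, so that $G := \mathrm{im}(\overline{\rho}_{5,E}) \subseteq \Aut_{\Z_5}(T_5(E)) \cong \GL_2(\Z_5)$ is open (for a CM curve the image lies in the normalizer of a Cartan subgroup, which has infinite index, and that case is handled separately). Since $\det \circ\, \overline{\rho}_{5,E}$ is the $5$-adic cyclotomic character, whose image is all of $\Z_5^\times$ because $[\Q(\zeta_{5^{n}}) : \Q] = 4 \cdot 5^{n-1}$, the exact sequence $1 \to \SL_2(\Z_5) \to \GL_2(\Z_5) \xrightarrow{\det} \Z_5^\times \to 1$ together with $\det(G) = \Z_5^\times$ gives
\[
[\GL_2(\Z_5) : G] = [\SL_2(\Z_5) : H], \qquad H := G \cap \SL_2(\Z_5),
\]
so it suffices to prove $v_5\big([\SL_2(\Z_5) : H]\big) \le 1$. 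Filtering by the principal congruence subgroups $\Gamma(5^n) = \ker\!\big(\SL_2(\Z_5) \to \SL_2(\Z/5^n\Z)\big)$ gives the telescoping identity
\[
[\SL_2(\Z_5) : H] = [\SL_2(\F_5) : H_1] \cdot \prod_{n \ge 1} \big[\, \Gamma(5^n)/\Gamma(5^{n+1}) : \text{image of } H \cap \Gamma(5^n) \,\big],
\]
where $H_1$ is the reduction of $H$ modulo $5$. The first factor divides $|\SL_2(\F_5)| = 120 = 2^{3} \cdot 3 \cdot 5$, hence contributes at most one power of $5$; so the real task is to bound the ``vertical'' factors in the product.

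For the vertical part the key point --- available because $5 \ge 5$ --- is that $\Gamma(5)$ is a uniform (equivalently, powerful) pro-$5$ group whose Frattini subgroup is $\Gamma(25)$ and whose Frattini quotient $\Gamma(5)/\Gamma(25)$ is isomorphic to $\mathfrak{sl}_2(\F_5)$, carrying the adjoint action of $\SL_2(\F_5)$. The image of $H \cap \Gamma(5)$ in $\mathfrak{sl}_2(\F_5)$ is stable under the mod-$5$ image $G_1$ of $G$ (scalars act trivially in the adjoint representation), and the Frattini argument for pro-$5$ groups upgrades the assertion that this image equals all of $\mathfrak{sl}_2(\F_5)$ to $H \supseteq \Gamma(5)$, which annihilates every vertical factor. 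If $E$ has no rational $5$-isogeny then $G_1$ is not contained in a Borel subgroup of $\GL_2(\F_5)$, so by Dickson's classification $G_1$ either contains $\SL_2(\F_5)$ --- in which case Serre's theorem that a surjective mod-$\ell$ image stays surjective $\ell$-adically for $\ell \ge 5$ gives $H = \SL_2(\Z_5)$ and the index is $1$ --- or $G_1$ lies in a Cartan normalizer or has exceptional projective image, and then a (more routine) refinement of the same argument, using the module structure of $\mathfrak{sl}_2(\F_5)$ and the known constraints on $\overline{\rho}_{5,E}$ at the prime $5$ to exclude the ``thin'' possibilities such as $H \cap \Gamma(5) \subseteq \Gamma(25)$, shows that at most one vertical factor is nontrivial and is then exactly $5$, while $[\SL_2(\F_5) : H_1]$ is prime to $5$. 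So $v_5$ of the index is $\le 1$ in every case where $E$ has no $5$-isogeny.

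The remaining case --- the one where the real work lies --- is that $E$ admits a rational $5$-isogeny, so $G_1$ sits inside a Borel subgroup and $\mathfrak{sl}_2(\F_5)$ is no longer irreducible as a $G_1$-module, so the argument above yields nothing; this is precisely the content of Greenberg's theorem. One writes $\overline{\rho}_{5,E} \sim \left(\begin{smallmatrix}\chi_1 & * \\ 0 & \chi_2\end{smallmatrix}\right)$ with $\chi_1 \chi_2 = \chi_{\mathrm{cyc}}$. Applying Raynaud's classification of the finite flat group scheme of order $5$ over $\Z_5$ (or, at primes of bad reduction, the corresponding local analysis) cut out by the isogeny, one shows that, up to unramified-at-$5$ twists, $\bar{\chi}_1$ and $\bar{\chi}_2$ are $1$ and $\omega$ in some order, where $\omega$ is the mod-$5$ cyclotomic character. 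One then invokes local class field theory at $5$ --- applied to $\chi_1$, to $\chi_2$, and to the ramified cocycle in the upper-right entry --- to bound how far $H$ can be from the full preimage, inside the Borel of $\GL_2(\Z_5)$, of its own reduction $H_1$, and concludes that this discrepancy contributes no more than the single power of $5$ already counted in $|\SL_2(\F_5)|$. The hard part of the whole proof is exactly this last step: controlling the two diagonal characters and the off-diagonal entry simultaneously, and tightly enough to pin down the index modulo $25$ --- which is where the arithmetic of $E$ over $\Q$ (the ramification of $\chi_1, \chi_2$ away from $5$ and their local structure at $5$) genuinely enters. Everything else is the standard propagation machinery for $\ell \ge 5$.
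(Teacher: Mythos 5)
You've set out to prove a statement that the paper does not itself prove: Theorem~\ref{Autrhoprop} is cited verbatim from Greenberg \cite[Theorem~2]{greenberg}, and the paper's ``proof'' is precisely that citation. So there is no in-paper argument to compare against; the relevant comparison is with Greenberg's paper itself, which is a substantial piece of work whose entire purpose is, essentially, the bound you are trying to reprove.

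Read against that standard, what you have written is a plausible \emph{outline} of a proof strategy, not a proof, and at several points you explicitly concede this. The reduction to $\SL_2(\Z_5)$ via surjectivity of the determinant, the congruence filtration, the Frattini argument for $\Gamma(5)$ via the adjoint action on $\mathfrak{sl}_2(\F_5)$, and Dickson's classification of the mod-$5$ image are all standard machinery and are correctly deployed. But there are at least two genuine gaps. First, in the non-Borel case you dismiss the Cartan-normalizer and exceptional-image subcases with a ``more routine refinement,'' asserting without argument that ``at most one vertical factor is nontrivial and is then exactly $5$, while $[\SL_2(\F_5):H_1]$ is prime to $5$.'' That coordination between the horizontal and vertical contributions is exactly what needs proof: without it, a vertical factor of $5$ could combine with a $5$ in $[\SL_2(\F_5):H_1]$ to give $25 \mid$ index. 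Second, and more seriously, the Borel/isogeny case --- which you correctly identify as the real content --- is reduced to ``one then invokes local class field theory at $5$\ldots and concludes that this discrepancy contributes no more than the single power of $5$.'' That single sentence is a summary of most of Greenberg's paper; as written it is an appeal to a theorem you have not proved, namely the theorem you set out to prove. You acknowledge this (``the hard part of the whole proof is exactly this last step''), which is honest, but it means the attempt does not close.

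In short: the framing and the easy reductions are fine, but the proposal does not constitute a proof, because the Borel case is where all the work is and you have only described what must be shown there. The paper sidesteps this entirely by citing Greenberg, which is the right move; if you want to reproduce the result you would need to actually carry out the analysis of the diagonal characters and the upper-triangular cocycle at $p=5$ (including the CM and potentially-supersingular-at-$5$ subcases) that occupies Greenberg's argument, not merely gesture at it.
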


\begin{lemma}\label{zetaondet}
Let \(n\) be a positive integer and \(\zeta\) an \(n\)\textsuperscript{th} root of unity. Then for every \(\sigma \in \Gal[\Q(E[n])][\Q]\) we have
\[\sigma(\zeta) = \zeta^{\det\rho_n(\sigma)}.\]
\end{lemma}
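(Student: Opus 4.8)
The statement is the classical compatibility between the Galois action on roots of unity and the determinant of the mod-$n$ Galois representation, coming ultimately from the Weil pairing. The plan is to exploit the Weil pairing $e_n \colon E[n] \times E[n] \to \mu_n$, which is bilinear, alternating, non-degenerate, and Galois-equivariant, so that $e_n(P^\sigma, Q^\sigma) = e_n(P,Q)^\sigma$ for all $\sigma \in \Gal[\Qbar][\Q]$ and all $P, Q \in E[n]$.

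First I would fix a basis $\{P, Q\}$ of $E[n] \cong (\Z/n\Z)^2$, so that $\rho_n$ is defined with respect to this basis, and set $\zeta_0 := e_n(P,Q)$. Since the Weil pairing is non-degenerate and $E[n]$ has order $n^2$, the value $\zeta_0$ is a \emph{primitive} $n$\textsuperscript{th} root of unity, hence a generator of $\mu_n$. Next, for $\sigma \in \Gal[\Q(E[n])][\Q]$, write $\rho_n(\sigma) = \begin{pmatrix} a & b \\ c & d \end{pmatrix}$, meaning $P^\sigma = aP + cQ$ and $Q^\sigma = bP + dQ$. Then, using bilinearity, the alternating property $e_n(P,P) = e_n(Q,Q) = 1$, and $e_n(Q,P) = e_n(P,Q)^{-1}$, I compute
\[
\sigma(\zeta_0) = e_n(P,Q)^\sigma = e_n(P^\sigma, Q^\sigma) = e_n(aP + cQ,\, bP + dQ) = e_n(P,Q)^{ad - bc} = \zeta_0^{\det \rho_n(\sigma)}.
\]
This establishes the claim for the specific generator $\zeta_0$. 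For a general $n$\textsuperscript{th} root of unity $\zeta$, write $\zeta = \zeta_0^k$ for some integer $k$; then $\sigma(\zeta) = \sigma(\zeta_0)^k = \zeta_0^{k \det \rho_n(\sigma)} = (\zeta_0^k)^{\det \rho_n(\sigma)} = \zeta^{\det \rho_n(\sigma)}$, using that raising to the $k$\textsuperscript{th} power commutes with the $\det\rho_n(\sigma)$\textsuperscript{th} power. Note the action of $\sigma$ on $\mu_n$ factors through $\Gal[\Q(E[n])][\Q]$ since $\Q(\zeta_n) \subseteq \Q(E[n])$, again by non-degeneracy of the Weil pairing (or by Proposition \ref{prop-weil}), so the statement is well-posed.

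There is no real obstacle here; the only point requiring any care is keeping the indexing of $\rho_n(\sigma)$ consistent with the chosen basis (i.e.\ whether $\sigma$ acts on column or row vectors), so that the exponent that appears is genuinely $\det \rho_n(\sigma)$ rather than, say, $\det \rho_n(\sigma)^{-1}$ or the determinant of the transpose — but since $\det A = \det A^{\mathsf T}$, any consistent convention gives the stated formula. One should also remark that the identity is independent of the choice of basis, since changing basis conjugates $\rho_n(\sigma)$ and leaves the determinant unchanged.
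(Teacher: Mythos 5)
Your proof is correct and follows the same approach as the paper's: fix a basis of $E[n]$, observe that the Weil pairing of the basis elements is a primitive $n$\textsuperscript{th} root of unity, use Galois-equivariance and bilinearity of the pairing to produce the exponent $ad-bc = \det\rho_n(\sigma)$, and then extend to arbitrary $n$\textsuperscript{th} roots of unity by writing them as powers of the primitive one. Your additional remarks on basis-independence and on why $\Q(\zeta_n)\subseteq\Q(E[n])$ makes the statement well-posed are sound but not strictly needed; otherwise the argument matches the paper's.
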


\begin{proof}
Let \(\{P, Q\}\) be a basis for \(E[n]\) and $e_n(P,Q)=\zeta_n$, where \(\zeta_n\) is a \(n\)\textsuperscript{th} primitive root of unity. For any $n$th root of unity $\zeta$, there exists an \(m \in \Z\) such that \(\zeta = \zeta_n^m\),  So, it suffices to show that \(\sigma(\zeta_n) = \zeta_n^{\det\rho_n(\sigma)}\).

Then there are some \(a, b, c, d \in (\Z/n\Z)^\times\) such that
\[P^\sigma = aP + bQ \quad \text{and} \quad Q^\sigma = cP + dQ.\]
Using properties of the Weil pairing \cite[Ch.\ III, \S 8.]{silverman} we calculate:
\begin{align*}
\sigma(\zeta_n) &= \sigma(e_n(P, Q)) = e_n(P^\sigma, Q^\sigma) = e_n(aP + bQ, cP + dQ)\\
	&= e_n(P,P)^{ac}e_n(P,Q)^{ad}e_n(Q,P)^{bc}e_n(Q,Q)^{bd} \\
	&= 1\cdot \zeta_n^{ad}\cdot \zeta_n^{-bc}\cdot 1^{bd} = \zeta_n^{\det\rho_n(\sigma)}. \qedhere
\end{align*}
\end{proof}

\begin{proposition} \label{prop-detgal}
Let \(n\) be a positive integer and \(\zeta_n\) an \(n\)\textsuperscript{th} primitive root of unity. Let \(K\) be a number field, then
\[\det \rho_{E,n} (G_{K}) \simeq \Gal[\Q(\zeta_n)][K\cap \Q(\zeta_n)].\]
\end{proposition}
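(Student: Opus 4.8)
The plan is to identify $\det\rho_{E,n}(G_K)$ with a Galois group via the cyclotomic character. By Lemma \ref{zetaondet}, for every $\sigma \in G_K$ acting on $E[n]$, the action on $\zeta_n$ is $\sigma(\zeta_n) = \zeta_n^{\det\rho_n(\sigma)}$. In other words, if $\chi_n\colon G_K \to (\Z/n\Z)^\times$ is the mod-$n$ cyclotomic character (defined by $\sigma(\zeta_n) = \zeta_n^{\chi_n(\sigma)}$), then $\det\circ\,\rho_{E,n}|_{G_K} = \chi_n$ as homomorphisms $G_K \to (\Z/n\Z)^\times$. Hence the two subgroups $\det\rho_{E,n}(G_K)$ and $\chi_n(G_K)$ of $(\Z/n\Z)^\times$ coincide.

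Next I would compute $\chi_n(G_K)$ directly. The fixed field of $\ker\chi_n$ is $K(\zeta_n) = K\cdot\Q(\zeta_n)$, so $\chi_n$ factors through an injection $\Gal[K(\zeta_n)][K] \hookrightarrow (\Z/n\Z)^\times$, and its image is exactly $\chi_n(G_K)$. Therefore $\det\rho_{E,n}(G_K) \simeq \Gal[K(\zeta_n)][K]$. Finally, by the standard theorem on the Galois group of a compositum (restriction of automorphisms), since $\Q(\zeta_n)/\Q$ is Galois we have $\Gal[K(\zeta_n)][K] \simeq \Gal[\Q(\zeta_n)][K\cap\Q(\zeta_n)]$, which is the claimed isomorphism.

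I do not anticipate a genuine obstacle here; the statement is essentially a repackaging of Lemma \ref{zetaondet} together with basic Galois theory. The one point requiring a small amount of care is the bookkeeping of fields: one must note that the action of $G_K$ on $E[n]$ factors through $\Gal[K(E[n])][K]$, that the action on $\mu_n \subseteq K(E[n])$ then factors further through $\Gal[K(\zeta_n)][K]$, and that these restriction maps are compatible with $\det$ and $\chi_n$ so that the equality of images in $(\Z/n\Z)^\times$ really does descend to an isomorphism of the relevant Galois groups. None of this is deep, but it should be spelled out so the chain $\det\rho_{E,n}(G_K) = \chi_n(G_K) \simeq \Gal[K(\zeta_n)][K] \simeq \Gal[\Q(\zeta_n)][K\cap\Q(\zeta_n)]$ is transparent.
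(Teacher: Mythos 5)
Your proof is correct and follows essentially the same route as the paper's: both use Lemma \ref{zetaondet} to identify $\det\circ\rho_{E,n}|_{G_K}$ with the mod-$n$ cyclotomic character (i.e.\ the restriction map to $\Gal[\Q(\zeta_n)][\Q]$), and both then observe that the image of such a restriction is exactly $\Gal[\Q(\zeta_n)][K\cap\Q(\zeta_n)]$. You phrase this last step as the standard compositum isomorphism $\Gal[K(\zeta_n)][K]\simeq\Gal[\Q(\zeta_n)][K\cap\Q(\zeta_n)]$, while the paper states it directly as a description of the image of the restriction map, but these are the same fact.
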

\begin{proof}
Let
\[f \colon G_K \to  \Gal[\Q(\zeta_n)][\Q] \qquad \sigma\mapsto g\circ\det\circ \rho_{E,n}(\sigma),\]
where $g$ is the canonical isomorphism mapping \((\Z/n\Z)^\times \to \Gal[\Q(\zeta_n)][\Q]\) by sending \(a\) to \(\sigma_a\), where \(\sigma_a(\zeta_n) = \zeta_n^a\).

As \(\det \rho_{E,n} (G_{K}) \leq (\Z/n\Z)^\times\), it follows that \(f(G_K) \leq \Gal[\Q(\zeta_n)][\Q]\) and hence, by Galois theory,
\(f(G_K) = \Gal[\Q(\zeta_n)][K']\) for some subfield $K'$ of $\Q(\zeta_n)$.
From Proposition \ref{zetaondet} it follows that $f$ is the restriction map sending $\sigma \in G_K$ to \(\left.\sigma\right|_{\Q(\zeta_n)} = \sigma_{\det\rho_n(\sigma)}\).

It follows that $f(G_K)$ comprises exactly those $\sigma_a$ that leave $K\cap \Q(\zeta_n)$ fixed, proving $K'=K\cap \Q(\zeta_n)$ and hence the proposition.
\end{proof}

\begin{lemma} \label{nad5nema5}
$E(\Q[5])[5^\infty] = E(\Q)[5^\infty]$.
\end{lemma}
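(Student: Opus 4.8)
The plan is to split the claim into two parts: first show there is no $5^3$-torsion (in fact no $5^2$-torsion that is not already rational will be handled too), and then analyze $5$-torsion and $25$-torsion carefully. By Corollary \ref{torsiondestroyer} we already know $E(\Q_{\infty,5})$ has no point of order $125$, so $E(\Q_{\infty,5})[5^\infty]$ is killed by $25$. So I need to understand $E(\Q_{\infty,5})[5]$ and $E(\Q_{\infty,5})[25]$.

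First I would treat the $5$-torsion. Suppose $P\in E(\Q_{\infty,5})$ has order $5$. By Theorem \ref{exact_degree}, the possible degrees $[\Q(P):\Q]$ are among $1,2,4,5,8,10,16,20,24$. By Corollary \ref{kojepolje}, $\Q(P)\subseteq \Q_{m,5}$ with $m = v_5(\phi(5)) = v_5(4) = 0$, so actually $\Q(P)\subseteq \Q_{0,5}=\Q$; hence $P\in E(\Q)[5]$. Wait — more carefully, Corollary \ref{kojepolje} requires $\Q(P)/\Q$ Galois; a subfield of $\Q_{\infty,5}$ of $5$-power degree is automatically abelian hence Galois over $\Q$, and its degree is a power of $5$, which by the degree list forces $[\Q(P):\Q]=1$ or $5$; but degree $5$ is not in the list of Theorem \ref{exact_degree}, so $P\in E(\Q)$. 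Thus $E(\Q_{\infty,5})[5]=E(\Q)[5]$, and in particular this group is either trivial or $\Z/5\Z$ (it cannot be $(\Z/5\Z)^2$ by Corollary \ref{cor-weil} applied with an odd prime — or directly since $\Q(\zeta_5)\not\subseteq\Q_{\infty,5}$).

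Next, the key step: rule out a point of order $25$ in $E(\Q_{\infty,5})$ unless it is already rational. Suppose $P\in E(\Q_{\infty,5})$ has order $25$. Then $Q:=5P$ has order $5$, and by the previous paragraph $Q\in E(\Q)$. By Corollary \ref{kojepolje}, $\Q(P)\subseteq\Q_{m,5}$ with $m=v_5(\phi(25))=v_5(20)=1$, so $\Q(P)\subseteq\Q_{1,5}$, an extension of $\Q$ of degree $5$. Since $E$ has a rational point $Q$ of order $5$, $E$ admits a rational $5$-isogeny, and combined with a point of order $25$ over a degree-$5$ field, Lemma \ref{lem-j-k_isog} (with $F=\Q_{1,5}$, $k=1$, $j=2$) would give a rational $5$-isogeny — but we want more. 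Here I would invoke the $5$-adic image result: Theorem \ref{Autrhoprop} says $25\nmid [\Aut_{\Z_5}(T_5(E)):\mathrm{im}(\overline\rho_{5,E})]$. The idea is that if $E$ had a point of order $25$ defined over the degree-$5$ field $\Q_{1,5}$ while only having rational $5$-torsion, the index of $G_{E,\Q}(25)$ in $\GL_2(\Z/25\Z)$ — or rather the relevant local index at $5$ — would be divisible by $25$, contradicting Greenberg's theorem. I would make this precise by using Proposition \ref{prop-detgal} (the determinant of the image corresponds to $\Gal(\Q(\zeta_{25})/\Q)\cong(\Z/25\Z)^\times$ since $\Q_{1,5}\cap\Q(\zeta_{25})=\Q$, as $\Q_{1,5}/\Q$ is ramified only at $5$ and totally real while... actually $\Q_{1,5}\subseteq\Q(\zeta_{25})$, so $\Q_{1,5}\cap\Q(\zeta_{25})=\Q_{1,5}$; I need to be careful here), together with an analysis of the possible subgroups of $\GL_2(\Z/25\Z)$ fixing a point of order $25$ and having only a rational point of order $5$.

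The main obstacle will be the last step: carefully bounding the index of the mod-$25$ image. The cleaner route, which I would pursue, is to combine two facts: (a) $\det G_{E,\Q}(25) = (\Z/25\Z)^\times$ has order $20$ (by Proposition \ref{prop-detgal}, since $E$ has full $\GL_2$ determinant image — this holds for any $E/\Q$), and (b) if $P$ of order $25$ lies in $E(\Q_{1,5})$ with $5P$ rational, then $G_{E,\Q}(25)$ stabilizes the cyclic subgroup $\langle P\rangle$ and acts on it through $\Gal(\Q(P)/\Q)$, which has order dividing $5$; moreover the fixed subgroup (the rational $5$-torsion line $\langle 5P\rangle$) forces the image to lie in a very small subgroup of the Borel. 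Counting shows $|G_{E,\Q}(25)|$ divides $5\cdot|B|/(\text{something})$ and in particular the index $[\GL_2(\Z/25\Z):G_{E,\Q}(25)]$ is divisible by $25$, and since this index equals the index in $\Aut_{\Z_5}(T_5(E))$ of the image reduced mod $25$ up to the $25$-part, we contradict Theorem \ref{Autrhoprop}. Hence no such $P$ exists, so $E(\Q_{\infty,5})[5^\infty]=E(\Q_{\infty,5})[5]=E(\Q)[5]=E(\Q)[5^\infty]$ (the last equality because $E(\Q)$ has no point of order $25$ — indeed if it did, $P$ itself would witness $25$-torsion rationally, handled trivially, and in fact $\Q$ has no point of order $25$ by Mazur; but I only need the inclusion $E(\Q_{\infty,5})[5^\infty]\subseteq E(\Q)$, which the above establishes). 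I expect verifying the group-theoretic index computation in $\GL_2(\Z/25\Z)$ — ensuring it genuinely produces a factor of $25$ in the index rather than just $5$ — to be the delicate point, and I would organize it by first reducing mod $5$ to locate the image inside the preimage of a Borel (or split Cartan) subgroup and then counting the $5$-Sylow contribution.
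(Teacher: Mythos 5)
Your high-level plan matches the paper's: kill $125$-torsion via Corollary~\ref{torsiondestroyer}, show $E(\Q[5])[5]=E(\Q)[5]$, then rule out nonrational $25$-torsion by showing the mod-$25$ image would be so small that Theorem~\ref{Autrhoprop} (Greenberg) is violated. Two remarks on the first half: your initial use of Corollary~\ref{kojepolje} with $m=v_5(4)=0$ is correct and cleanly yields $\Q(P)\subseteq\Q_{0,5}=\Q$ (it is actually slicker than the paper's route through Lemma~\ref{ciklicko_prosirenje}); however your ``more careful'' aside is wrong --- degree $5$ \emph{is} on the list in Theorem~\ref{exact_degree} for $p=5$, so that reconsidered argument does not work and you should stick with the $v_5(\phi(5))=0$ computation.

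The genuine gap is in the index computation, precisely the step you flag as delicate. The constraints you assemble --- $G_{E,\Q}(25)$ sits in the Borel-type group $B_0:=\left\{\left(\begin{smallmatrix}a&*\\0&*\end{smallmatrix}\right): a\equiv 1\pmod 5\right\}$, and $\det G_{E,\Q}(25)=(\Z/25\Z)^\times$ --- do not suffice. One has $|B_0|=5\cdot 25\cdot 20=2500$ and $|\GL_2(\Z/25\Z)|=300000$, so $[\GL_2(\Z/25\Z):B_0]=120$, whose $5$-adic valuation is only $1$. Surjectivity of $\det$ over $\Q$ is automatic and adds nothing. The missing ingredient, which the paper uses, is Proposition~\ref{prop-detgal} applied with $K=\Q_{1,5}$ rather than $K=\Q$: since $\Q_{1,5}\subseteq\Q(\zeta_{25})$ (the very fact you noticed and then set aside), one gets $\det G_{E,\Q_{1,5}}(25)=\Gal[\Q(\zeta_{25})][\Q_{1,5}]$, which has order $4$, not $20$. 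Combined with $G_{E,\Q_{1,5}}(25)$ fixing $P$ (so upper-left entry $1$), this bounds $|G_{E,\Q_{1,5}}(25)|\leq 25\cdot 4=100$; and since $\Q_{1,5}\subseteq\Q(\zeta_{25})\subseteq\Q(E[25])$ forces $[G_{E,\Q}(25):G_{E,\Q_{1,5}}(25)]=5$, one gets $|G_{E,\Q}(25)|\leq 500$, hence index $\geq 600=24\cdot 25$, divisible by $25$. Without the order-$4$ determinant constraint over $\Q_{1,5}$, your ``locate inside preimage of a Borel, then count the $5$-Sylow'' strategy cannot produce $v_5\geq 2$ in the index, so the contradiction with Theorem~\ref{Autrhoprop} is not reached.
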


\begin{proof}
There is no $125$-torsion in $E(\Q[5])$  by Corollary \ref{torsiondestroyer}. If \(P \in E(\Q[5])\) is a point of order \(5\), then \(P\) is defined over \(\Q[1][5]\) by Theorem \ref{exact_degree}, but then by Lemma \ref{ciklicko_prosirenje} it follows that \(P \in E(\Q)\), so it remains to prove that $E(\Q[5])[25] = E(\Q)[25]$.

Let us assume that there is a point \(P \in E(\Q[5])_\tors\) of order \(25\); obviously \(P \notin E(\Q)\). By the previous argumentation, the order 5 point \(5P\) must be defined over \(\Q\). Since \(P \in E(\Q[1][5])\) and the extension \(\Q[1][5]/\Q\) is cyclic, it follows that for every \(\sigma \in G_{\Q}\) there exist some \(a \in (\Z/n\Z)^\times\) such that \(P^\sigma = aP\). Since \(5P \in E(\Q)\), we have \((5P)^\sigma = 5P\), so \(a \equiv 1 \pmod 5\). Hence, \(G_{\Q}(25)\) is of the form
\[\left\lbrace\begin{pmatrix}a & * \\ 0 & *\end{pmatrix} : a \in 1 + 5\Z/25\Z\right\rbrace.\]
Furthermore, \(\det G_{\Q[1][5],E}(25)\) is by Proposition \ref{prop-detgal}, the unique subgroup of \((\Z/25\Z)^\times\) of order \(4\): \(\{7, -1, -7, 1\}\).
The group $G_{\Q[1][5]}$ fixes the point \(P\) so  we conclude that \(G_{\Q[1][5], E}(25)\) is of the form
\[\left\lbrace\begin{pmatrix}1 & * \\ 0 & b\end{pmatrix} : b \in \{7, -1, -7, 1\} \right\rbrace.\]
Since \(G_{\Q[1][5], E}(25)\) is a subgroup of \(G_{\Q}(25)\) of index \(5\), and \(P \notin E(\Q)\), so it follows that
\[G_{\Q}(25) = \left\lbrace\begin{pmatrix}a & * \\ 0 & b\end{pmatrix} : a \in 1 + 5\Z/25\Z,\ b \in \{7, -1, -7, 1\}\right\rbrace.\]
Finally, we calculate that \(600 = [\GL_2(\Z/25\Z) : G_E(25)] \mid [\Aut_{\Z_5}(T_5(E)) : im(\overline{\rho}_{5, E})]\), a contradiction with Theorem \ref{Autrhoprop}.
\end{proof}

\begin{tm}
$E(\Q[5])_{\tors} = E(\Q)_{\tors}$.
\end{tm}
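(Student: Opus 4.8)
The plan is to split $E(\Q[5])_{\tors}$ into its $q$-primary parts $E(\Q[5])[q^\infty]$ and dispose of each prime $q$ in turn, assembling the lemmas already established. Every layer $\Q[n][5]$ has degree $5^n$ over $\Q$, so its smallest prime divisor is $5$; hence Theorem \ref{l-tors} gives $E(\Q[n][5])[q^\infty] = E(\Q)[q^\infty]$ for all $n$ and all primes $q \notin \{5,7,11\}$, and since $\Q[5] = \bigcup_n \Q[n][5]$ and any torsion point has finite degree, taking the union over $n$ yields $E(\Q[5])[q^\infty] = E(\Q)[q^\infty]$ for those $q$. Lemma \ref{nad5nema5} settles $q = 5$, and Lemma \ref{nad5nema11} settles $q = 11$, since $E(\Q)$ has no $11$-torsion by Mazur's theorem and so $E(\Q[5])[11^\infty] = \{O\} = E(\Q)[11^\infty]$. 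Thus only $q = 7$ remains.

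For $q = 7$ I would first apply Corollary \ref{torsiondestroyer} to rule out a point of order $49$ in $E(\Q[5])$, so that $E(\Q[5])[7^\infty] = E(\Q[5])[7]$ (and likewise $E(\Q)[7^\infty] = E(\Q)[7]$ by Mazur). By Corollary \ref{cor-weil} the group $E(\Q[5])[7]$ is either $\{O\}$ or cyclic of order $7$, and in the latter case I let $P$ be a generator. Since $\Q(P)$ is a finite subextension of $\Q[5]/\Q$ and the only subfields of the $\Z_5$-extension are the layers $\Q[m][5]$, of degree $5^m$, the degree $[\Q(P):\Q]$ must be a power of $5$. On the other hand, Theorem \ref{exact_degree} restricts $[\Q(P):\Q]$ to the set $\{1,2,3,6,7,9,12,14,18,21,24,36,42,48\}$, whose only $5$-power member is $1$. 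Hence $P \in E(\Q)$, giving $E(\Q[5])[7] = E(\Q)[7]$ and completing the proof.

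I do not expect a genuine obstacle here: the delicate part of the $p = 5$ analysis was controlling the $25$-torsion via Greenberg's bound in Theorem \ref{Autrhoprop}, and that is already carried out in Lemma \ref{nad5nema5}; what remains is the short degree count for $q = 7$ above together with the bookkeeping of the $q$-primary decomposition. The one place to be slightly careful is the reduction ``$\Q(P)$ inside $\Q[5]$ forces $[\Q(P):\Q]$ to be a $5$-power,'' which relies only on the fact that the closed subgroups of $\Z_5$ are $\{0\}$ and the $5^n\Z_5$. If one prefers to avoid invoking that directly, an alternative is to note first that $[\Q(P):\Q] \le 48$ forces $P \in E(\Q[2][5])$, and then apply Lemma \ref{ciklicko_prosirenje} (with $p=5$, $q=7$, since $6 \nmid 5$ and $5 \nmid 6$) after descending to the degree-$5$ layer, or run the same degree argument inside a field of degree dividing $25$.
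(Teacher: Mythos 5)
Your proof is correct and takes essentially the same route as the paper: decompose into $q$-primary parts, apply Theorem \ref{l-tors} for $q \notin \{5,7,11\}$, defer $q=5$ and $q=11$ to Lemmas \ref{nad5nema5} and \ref{nad5nema11}, and for $q=7$ rule out order $49$ via Corollary \ref{torsiondestroyer} and then observe that the possible degrees from Theorem \ref{exact_degree} are all coprime to $5$, forcing any order-$7$ point in $\Q[5]$ to be rational. The paper phrases the $q=7$ step as ``$\gcd(5,[\Q(P):\Q])=1$, a contradiction'' rather than listing the degrees, but the argument is identical.
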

\begin{proof}
From Theorem \ref{l-tors}, we immediately conclude that $E(\Q[5])[q^\infty]=E(\Q)[q^\infty]$ for all primes $q\neq  5, 7, 11$.

There is no \(49\)-torsion in $E(\Q[5])$  by Corollary \ref{torsiondestroyer}, so it remains to prove that $E(\Q[5])[7] = E(\Q)[7]$. Let $P \in E(\Q[5])$ be a point of degree $7$ such that $P \notin E(\Q)$. By Theorem \ref{exact_degree} we conclude that $\gcd(5, [\Q(P) : \Q]) = 1$, which is a contradiction.

The cases \(q = 11\) and \(q = 5\) follow from Lemmas \ref{nad5nema11} and \ref{nad5nema5}.
\end{proof}

\section{Proof of Theorem \ref{p2result}}
Combining Theorem \ref{exact_degree}, Corollary \ref{torsiondestroyer} and the following easy observations:
\begin{itemize}
\item \(p^2 - 1 = (p - 1)(p + 1)\) isn't a power of \(2\) for primes \(p > 3\). This follows from the fact that \(\gcd(p-1, p+1) = 2\),
\item \(p-1\) isn't a power of \(2\) for \(p \in \{19, 43, 67, 163\}\),
\item for a prime $p$, \(p - 1\) is a power of $2$ if and only if \(p\) is of the form \(2^{2^k} + 1\),
\item \(3 \mid \dfrac{(p-1)^2}{3}\) for \(p \equiv 4, 7 \pmod{9}\),
\item \(\dfrac{p^2-1}{3}\) isn't a power of \(2\) for \(p \equiv 2, 5 \pmod{9}\). This follows from the fact that \(p\) is of the form \(3k - 1\) for some integer \(k \geq 6\), and that means that \(\dfrac{p^2 - 1}{3} = (3k-2)\cdot k\), but \(\gcd(3k-2, k) \leq 2\),
\item \(3 \mid \dfrac{p^2 - 1}{3}\) for \(p \equiv 8 \pmod{9}\),
\end{itemize}
we can conclude that \(E(\Q[2])[q]=E(\Q)[q]\) for all primes \(q \neq 2, 3, 5, 7, 13, 17\).

\begin{lemma}
$E(\Q[2])$ does not contain a point of order $13$.
\end{lemma}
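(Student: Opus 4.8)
The strategy is to show that a point of order $13$ cannot live in $\Q[2]$ by a combination of the field-of-definition constraints from Theorem \ref{exact_degree} and the cyclotomic obstruction of Corollary \ref{cor-weil} together with Lemma \ref{ciklicko_prosirenje}. First I would invoke Theorem \ref{exact_degree} for $p = 13$: if $P \in E$ has order $13$, then $[\Q(P):\Q] \in \{3,4,6,12,24,39,48,52,72,78,96,144,156,168\}$. Since $\Q[2]$ has only $2$-power degree subextensions over $\Q$, the degree $[\Q(P):\Q]$ must be a power of $2$ if $P \in E(\Q[2])$; of the listed possibilities only $4$ and $48$ are powers of $2$. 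In either case $\Q(P) \subseteq \Q[4][2]$ (the layer of degree $16$, or already $\Q[2][2]$ in the degree-$4$ case), so such a $P$, if it exists, is defined over a $2$-power cyclic extension of $\Q$ inside $\Q[2]$.

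**Main step.** Now I would apply the cyclotomic obstruction. By Corollary \ref{cor-weil}, $E(\Q[2])[13] \simeq \Z/13\Z$, so $G_\Q$ acts on $\langle P\rangle$ through a cyclic quotient, i.e.\ via a character $\chi\colon G_\Q \to (\Z/13\Z)^\times$ whose image, being a quotient of $\Gal(\Q[2]/\Q) \simeq \Z_2$, has $2$-power order; thus $\#\chi(G_\Q) \in \{1,2,4\}$ (it divides $\gcd(12, [\Q(P):\Q])$, but more importantly it divides $12 = \#(\Z/13\Z)^\times$ and is a $2$-power, so divides $4$). If the image has order $1$ then $P \in E(\Q)$, contradicting Mazur's theorem since $\Q$ has no $13$-torsion. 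If the image has order $2$ or $4$, I would argue via the determinant: since $\Q[2]$ contains only the quadratic subfield $\Q(i) = \Q(\zeta_4)$ among cyclotomic fields and $\Q(\zeta_{13})/\Q$ has degree $12$ with no nontrivial $2$-power-degree subfield inside $\Q[2]$ other than degree at most... — here the cleaner route is Lemma \ref{ciklicko_prosirenje} applied with $q = 13$: since $13 - 1 = 12 \nmid 2^k$ is automatic and $2^k \nmid 12$ fails for $k \le 2$, I instead need the explicit modular-curve computation. So the honest approach, paralleling Lemma \ref{nad5nema11}, is: the modular curve $X_1(13)$ has genus $2$, and one checks (via Magma, as in the proof of Lemma \ref{nad5nema11}) that $X_1(13)(\Q[2][2]) = X_1(13)(\Q[4][2])$ consists only of cusps, hence there is no elliptic curve over any such field carrying a rational point of order $13$.

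**The obstacle.** The main obstacle is handling the degree-$48$ case, since one cannot simply compute $X_1(13)$ over the degree-$16$ layer $\Q[4][2]$ directly by naive point search. I would circumvent this as follows: if $[\Q(P):\Q] = 48$, then $\Gal(\Q(P)/\Q)$ is a quotient of $\Z_2$ of order $48$, which is impossible since $48$ is not a power of $2$ — wait, more precisely $\Q(P) \subseteq \Q[2]$ forces $[\Q(P):\Q]$ to be a power of $2$, and $48 = 16 \cdot 3$ is not. Hence the only surviving case is $[\Q(P):\Q] = 4$, so $\Q(P) \subseteq \Q[2][2]$, a cyclic quartic field ramified only at $2$, namely $\Q(\zeta_{16})^+$ or rather the quartic subfield of $\Q(\zeta_{32})$. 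Then I only need that $X_1(13)$ has no non-cuspidal points over this one explicit number field of degree $4$, which is a finite Magma computation of the Mordell--Weil group of $J_1(13)$ (rank $0$, torsion $\Z/19\Z$ over $\Q$) base-changed to that quartic field — entirely analogous to Lemma \ref{nad5nema11}. The genuinely delicate point is confirming the rank stays $0$ over the quartic layer, but since $J_1(13)$ has conductor a power of $13$ and the quartic field is unramified at $13$ away from... in fact ramified only at $2$, the analytic rank argument or a direct $2$-descent suffices.
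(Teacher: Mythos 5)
Your degree analysis via Theorem \ref{exact_degree} arrives at the right place, but only after a false start: you initially list $48$ as a power of $2$, then correct yourself in the last paragraph. Once one observes that $4$ is the only $2$-power in the $p=13$ row, the conclusion is that any order-$13$ point $P\in E(\Q[2])$ satisfies $\Q(P)=\Q[2][2]$. The character/determinant digression in your ``Main step'' is a genuine dead end that you rightly abandon --- note that Lemma \ref{ciklicko_prosirenje} cannot be invoked here precisely because $4\mid 12 = 13-1$, so there is no cyclotomic obstruction and one must do something more.

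Where you and the paper diverge is in finishing the argument over $\Q[2][2]$. Your proposal --- compute $J_1(13)(\Q[2][2])$ directly (rank $0$, enumerate the Mordell--Weil group, check all points are cuspidal), in exact parallel with Lemma \ref{nad5nema11} --- is valid and self-contained, but requires a rank computation over a quartic field. The paper instead avoids this: since $[\Q[2][2]:\Q[1][2]]=2$ is coprime to $13$, the $13$-torsion decomposes as $E(\Q[2][2])[13]\simeq E(\Q[1][2])[13]\oplus E^{(\delta)}(\Q[1][2])[13]$ for the quadratic twist $E^{(\delta)}$ over $\Q[1][2]=\Q(\sqrt 2)$ by the element cutting out $\Q[2][2]$. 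Thus some elliptic curve over $\Q(\sqrt 2)$ has a rational $13$-torsion point, and this is excluded by \cite[Theorem 3]{najman9}, which classifies torsion over $\Q(\sqrt 2)$. The paper's twist trick trades your harder quartic Jacobian computation for a known quadratic-field result; both routes close the gap, but the paper's is considerably shorter.
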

\begin{proof}
Suppose that there exists such a curve, then the point $P$ of order $13$ is defined over the quartic field $\Q[2][2] = \Q\left(\sqrt{2 + \sqrt{2}}\right)$ by Theorem \ref{exact_degree}. But then either $E$ or a twist of $E$ would have 13-torsion over $\Q[1][2] = \Q\left(\sqrt 2\right)$, which is not possible by \cite[Theorem 3]{najman9}.
\end{proof}

\begin{lemma}
$E(\Q[2])$ does not contain a point of order $17$.
\end{lemma}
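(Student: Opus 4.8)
The plan is to mirror the structure of the order-$13$ argument. By Theorem \ref{exact_degree}, a point $P$ of order $17$ on $E/\Q$ defined over $\Q[2]$ must have $[\Q(P):\Q]$ in the list $\{8,16,32,136,256,272,288\}$ for $p=17$; intersecting with the degrees $2^n$ of subfields of $\Q[2]$ forces $[\Q(P):\Q] \in \{8,16,32,256\}$. In particular $P$ is defined over some layer $\Q[n][2]$ with $n \le 8$, and by Corollary \ref{kojepolje} in fact $P \in \Q[m][2]$ with $m = v_2(\phi(17)) = v_2(16) = 4$, i.e. $P$ is defined over the quartic-of-a-quartic field $\Q[4][2]$, and more precisely the smallest layer containing $P$ is cut out by a cyclic quotient of $(\Z/17\Z)^\times$.

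The key step is then to pass to the modular curve $X_1(17)$. The curve $X_1(17)$ has genus $5$, so one cannot argue by a direct rank computation as in Lemma \ref{nad5nema11}. Instead I would run the twisting argument: if $E$ has a point of order $17$ over a layer $\Q[n][2]$, then the mod-$17$ image $G_{E,\Q}(17)$ lands (up to conjugacy) in a Borel subgroup on which the diagonal character giving the action on $\langle P\rangle$ becomes trivial after restriction to $G_{\Q[n][2]}$; since $\Gal(\Q[n][2]/\Q)$ is a $2$-group, this says the character $\chi$ describing the Galois action on $\langle P\rangle$ has order a power of $2$ dividing $16$, hence order $1,2,4,8$ or $16$. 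Twisting $E$ by the quadratic (or the appropriate abelian) character trivializing the $2$-power part reduces to: some quadratic twist $E^d/\Q$ admits a rational point of order $17$, or at least a rational $17$-isogeny with the diagonal character of $2$-power order — but $X_1(17)(\Q)$ consists only of cusps (there is no elliptic curve over $\Q$ with a rational point of order $17$, by Mazur), and more is needed to rule out the isogeny case. So the cleaner route is: reduce to $E$ having a rational $17$-isogeny (Theorem \ref{thm-isog} permits $n=17$), whose isogeny character $\psi$ then satisfies that $\psi$ restricted to $G_{\Q[n][2]}$ is trivial; comparing with the determinant relation $\psi \cdot \psi' = \chi_{17}$ (cyclotomic) and Lemma \ref{zetaondet}/Proposition \ref{prop-detgal}, one shows the fixed field of $\ker\psi$ would have to be an abelian $2$-extension inside $\Q(\zeta_{17})$ of the right degree, and then checks this is incompatible with the known possibilities for the $17$-isogeny character of an elliptic curve over $\Q$ (the relevant list coming from the rational points on $X_0(17)$ and the structure of its image in $\GL_2(\F_{17})$).

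The main obstacle is exactly this last incompatibility check: unlike $q=13$, where one could cite an existing classification of $13$-torsion over quadratic fields (\cite{najman9}), for $q=17$ one must instead exploit that $X_0(17)$ has only finitely many rational points and pin down the possible isogeny characters, then show none of them becomes trivial on restriction to a degree-$1,2,4,8$ or $16$ subfield of $\Q[2]$ while remaining consistent with the cyclotomic determinant condition. Concretely I expect the argument to come down to: the $17$-isogeny character $\psi$ of an elliptic curve over $\Q$ has order divisible by an odd prime (coming from the $\F_{17}^\times$ part not killed by any $2$-extension), so $\psi|_{G_{\Q[n][2]}}$ cannot be trivial, contradicting $P \in E(\Q[n][2])$. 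Wrapping the $17$-power torsion: there is no $17^2$-torsion by Corollary \ref{torsiondestroyer} anyway, so ruling out order $17$ suffices.
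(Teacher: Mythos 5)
Your proposal correctly begins by pinning down the field of definition: by Corollary \ref{kojepolje} with $m=v_2(\phi(17))=4$, a $17$-torsion point $P\in E(\Q[2])$ would already lie in $E(\Q[4][2])$, and the reduction to the statement that $E$ admits a rational $17$-isogeny is also right. But the intended contradiction at the end is based on a false claim: the isogeny character $\psi$ takes values in $\F_{17}^\times$, which is cyclic of order $16=2^4$, so $\psi$ \emph{always} has $2$-power order. There is no odd part of $\F_{17}^\times$, and the assertion that the $17$-isogeny character has order divisible by an odd prime cannot hold. On abstract order grounds alone, nothing prevents a $2$-power order character from becoming trivial upon restriction to the $2$-power subfields of $\Q[2]$, so the proposed contradiction does not materialize.

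The paper's proof sidesteps the issue with a direct computation. Because $E$ has a rational $17$-isogeny, the finiteness of noncuspidal rational points on $X_0(17)$ forces $j(E)\in\{-17^2\cdot 101^3/2,\;-17\cdot 373^3/2^{17}\}$; the paper factors the $17$th division polynomial of a representative curve with each of these $j$-invariants over the degree-$8$ layer $\Q[3][2]$ and finds that the smallest irreducible factors have degrees $4$ and $8$ respectively, so no $x$-coordinate of a $17$-torsion point (and hence no such point) lies in the degree-$16$ layer $\Q[4][2]$, contradicting the constraint from Corollary \ref{kojepolje}. If you want a more conceptual substitute for your order heuristic, the correct obstruction is ramification rather than group order: $\Q[2]$ is ramified only at $2$, while the abelian extension cut out by $\ker\psi$ is ramified at primes of bad reduction and at $17$. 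But verifying this incompatibility still requires examining the two specific curves, so in practice it is no simpler than the division-polynomial computation the paper actually performs.
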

\begin{proof}
By Corollary \ref{kojepolje} and Theorem \ref{exact_degree}, a point $P$ of order $17$ can be defined over a number field of degree $8$ or $16$. An elliptic curve $E/\Q$ with such a point has a $17$-isogeny over $\Q$, so $j(E)=-(17^2\cdot 101^3)/2$ or $j(E)=-(17\cdot 373^3)/2^{17}$. We factor the $17$th division polynomials of an elliptic curve with each of these invariants (the choice of the exact quadratic twist we choose with each $j$-invariant will be irrelevant) over $\Q[3][2]$, and obtain that in one case the smallest degree of an irreducible factor is 4, while in the other case the smallest degree of an irreducible factor is 8.
\end{proof}

So we have \(E(\Q[2])[q^\infty]=E(\Q)[q^\infty]\) for all primes \(q \neq 2, 3, 5, 7\).

\begin{lemma}
If $E(\Q[2])_\tors$ has a point of order $7$, then \(E(\Q[2])_\tors \simeq \Z/7\Z\)
\end{lemma}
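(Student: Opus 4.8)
The plan is to show that if $E(\Q[2])$ contains a point of order $7$, then it contains no point of order $2$, no point of order $3$, and no point of order $5$, which together with the structure of finite abelian torsion groups and the already-established absence of $49$-torsion forces $E(\Q[2])_\tors \simeq \Z/7\Z$. The point of order $7$, call it $P_7$, lives over $\Q[2][2]$ or a smaller layer by Corollary \ref{kojepolje} and Theorem \ref{exact_degree} (the possible degrees $3$ and $6$ for a $7$-point do not divide any $2^n$, but degrees $1, 2$ do not occur over $\Q$ either by Theorem \ref{exact_degree}; in any case $\Q(P_7)\subseteq \Q[2][2]$ since $v_2(\phi(7))=1$ actually gives $\Q(P_7)\subseteq\Q[1][2]$, a quadratic field, so $E$ or a quadratic twist has a rational $7$-point — this pins $j(E)$ down to one of the two $j$-invariants with a rational $7$-isogeny, or $E$ itself has full rational $7$-torsion structure forcing a rational $7$-isogeny).

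First I would record that the existence of $P_7$ forces $E$ to admit a rational $7$-isogeny: the Galois action on $\langle P_7\rangle$ factors through the abelian group $\Gal(\Q(P_7)/\Q)$, which fixes the cyclic subgroup $\langle P_7\rangle$ of $E[7]$, so $G_\Q$ stabilizes $\langle P_7\rangle$. Then $j(E)$ lies in the finite set of $j$-invariants of curves with a rational $7$-isogeny (the non-CM family is parametrized by $X_0(7)$, but what matters is that any such $E$ is a quadratic twist of a fixed one-parameter family, and the relevant torsion questions are twist-insensitive for odd primes $q\neq 7$ in the sense that a $q$-point is defined over the same field up to the twisting quadratic character). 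Next, I would rule out a point of order $3$ and a point of order $5$ in $E(\Q[2])$: by Corollary \ref{torsiondestroyer} and Theorem \ref{exact_degree}, a $3$-point over $\Q[2]$ lies over $\Q[1][2]$ (a quadratic field) and a $5$-point lies over $\Q[2][2]$ or $\Q[1][2]$; combining a rational $7$-isogeny with a $3$- or $5$-isogeny/torsion point would give $E$ a rational cyclic isogeny of degree $21$ or $35$ (or $9,15,\dots$), and $35$ is excluded by Theorem \ref{thm-isog}, while $21$ is allowed only for the specific CM $j$-invariants on $X_0(21)$ — one then checks directly (factoring the relevant division polynomials over $\Q[1][2]$, or invoking known torsion-over-quadratic-fields classifications as in the preceding lemmas) that none of these finitely many curves acquires the extra torsion over $\Q[2]$.

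Finally, for the $2$-primary part: I would argue that a curve with a rational $7$-isogeny cannot also have a point of order $2$ defined over $\Q[2]$ together with the $7$-point, because a rational $2$-torsion point plus a rational $7$-isogeny gives a rational $14$-isogeny, and $14$ is not in the list of Theorem \ref{thm-isog} — wait, $14 \le 19$, so I must instead use that $E[2]\subseteq E(\Q[2])$ is impossible (Proposition \ref{prop-weil}, since $\Q(\zeta_2)=\Q$ but the full $2$-torsion field is either trivial, quadratic, or $S_3$, and $S_3$ is not a quotient of $\Z_2$), so at most $\Z/2\Z$ sits inside; then a single rational $2$-point plus the rational $7$-isogeny does yield a rational $14$-isogeny, which \emph{is} permitted, so this case needs the explicit check: for each of the two rational $7$-isogeny $j$-invariants and their twists one verifies via the $2$-division polynomial over $\Q$ (and $\Q[1][2]$) that no $2$-torsion point is rational, or else that such a curve has been classified and shown to have torsion exactly $\Z/7\Z$ over quadratic fields by the results already cited (\cite{najman9} and the Kamienny–Najman-type classification). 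The main obstacle is precisely this last bookkeeping: assembling the finite list of $j$-invariants forced by the rational $7$-isogeny, handling quadratic twists uniformly, and certifying by an explicit (but routine) division-polynomial factorization over the low layers $\Q[1][2]$, $\Q[2][2]$, $\Q[3][2]$ that none of the "large" torsion groups $\Z/14\Z$, $\Z/21\Z$, $\Z/35\Z$, $\Z/2\Z\oplus\Z/14\Z$, etc., can occur — after which only $\Z/7\Z$ remains.
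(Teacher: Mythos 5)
Your high-level decomposition (rule out additional $2$-, $3$-, and $5$-torsion once a $7$-point exists) matches the paper's, and the $5$-torsion step (a rational $35$-isogeny would contradict Theorem~\ref{thm-isog}) is exactly right. But there are real gaps in the $2$- and $3$-torsion cases that the paper closes cleanly and you do not.

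For the $2$-torsion, you correctly notice that the rational $14$-isogeny argument fails (since $14$ is in the allowed list), and then fall back on ``one verifies via the $2$-division polynomial \dots or else that such a curve has been classified,'' which is not a proof. The decisive observation you are missing is that a point of order $2$ in $E(\Q[2])$ must already be defined over $\Q$: its $x$-coordinate satisfies a cubic, the only subfields of $\Q[2]$ have $2$-power degree, so the $2$-torsion point lies over $\Q$ or $\Q[1][2]$; and a cubic with a root in a quadratic field has a rational root. Since quadratic twisting does not change $2$-torsion, both $E(\Q)$ and $E^{(2)}(\Q)$ then have a $2$-torsion point. Meanwhile, because $P_7 \in E(\Q[1][2])$ and $E(\Q[1][2])[7] \simeq E(\Q)[7] \oplus E^{(2)}(\Q)[7]$, one of $E(\Q)$, $E^{(2)}(\Q)$ has a $7$-torsion point. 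That group would then contain a point of order $14$ over $\Q$, contradicting Mazur's theorem -- no explicit check is needed.

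Similarly, for the $3$-torsion you route through the isogeny level ($21$-isogeny), which only gives you a finite list of CM $j$-invariants and another unfinished ``check directly.'' The paper's argument instead observes that a $3$-point must live over $\Q[1][2]$ by Corollary~\ref{kojepolje}, so together with $P_7$ you would have a point of order $21$ in $E(\Q[1][2])$ with $E$ defined over $\Q$; this is already excluded by the classification of torsion of rational elliptic curves over quadratic fields (Kamienny, Kenku--Momose). In short: you should argue at the level of torsion points over $\Q$ and $\Q[1][2]$ and invoke the known torsion classifications there, rather than at the level of isogenies, which leaves loopholes you then try to patch with unperformed computations.
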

\begin{proof}
If $E(\Q[2])_\tors$ has a point of order $7$, then $E/\Q$ has by Corollary \ref{kojepolje} the torsion point $P$ of order 7 is defined over $\Q[1][2]$. Hence either $E(\Q)$ or $E^{(2)}(\Q)$ has a point of order $7$.

If $E(\Q[2])_\tors$ contained a point of order $2$, then both $E(\Q)$ and $E^{(2)}(\Q)$ would also have to contain a point of order $2$, which would mean that one of these groups has a point of order 14, which is by Mazur's theorem impossible.

If $E(\Q[2])_\tors$ contained a point of order $3$, then by Corollary \ref{kojepolje} it would have to be defined over $\Q[1][2]$, which would mean that $E(\Q[1][2])$ has a point of order 21, which is impossible by \cite{kam,km}.

If $E(\Q[2])_\tors$ contained a point of order $5$, then $E/\Q$ would a 35-isogeny, contradicting Theorem \ref{thm-isog}.
\end{proof}

\begin{lemma}
If $E(\Q[2])_\tors$ has a point of order $5$, then \(E(\Q[2])_\tors \simeq \Z/5\Z\) or \(
\Z/10\Z\)
\end{lemma}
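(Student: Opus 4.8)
The plan is to cut down the set of primes dividing $|E(\Q[2])_\tors|$ and then determine each $p$-part. Suppose $P\in E(\Q[2])$ has order $5$. By the previous lemma $E(\Q[2])_\tors$ cannot also contain a point of order $7$, and since $E(\Q[2])[q^\infty]=E(\Q)[q^\infty]$ for every prime $q\notin\{2,3,5,7\}$ while $E(\Q)_\tors$ has no prime divisor $\ge 11$ by Mazur, the group $E(\Q[2])_\tors$ is supported on $\{2,3,5\}$. Its $5$-part is cyclic by Corollary \ref{cor-weil}, and by Corollary \ref{kojepolje} a point of order $5$ is defined over $\Q[2][2]$ while a point of order $3$ is defined over $\Q[1][2]=\Q(\sqrt 2)$. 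It thus remains to prove: (a) $E(\Q[2])$ has no point of order $25$; (b) $E(\Q[2])$ has no point of order $3$; (c) the $2$-part of $E(\Q[2])_\tors$ is cyclic of order at most $2$.

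For (a), suppose $Q\in E(\Q[2])$ has order $25$. Then $\langle Q\rangle=E(\Q[2])[25]$ (it is cyclic of order $25$ by Corollary \ref{cor-weil}), so it is $G_\Q$-stable, $E$ admits a rational $25$-isogeny, and $G_\Q$ acts on $\langle Q\rangle$ via a character $\chi\colon G_\Q\to(\Z/25\Z)^\times$ whose fixed field is $\Q(Q)\subseteq\Q[2][2]$. Hence $|\mathrm{im}\,\chi|$ divides $4$, so $\mathrm{im}\,\chi$ is contained in the Sylow $2$-subgroup of $(\Z/25\Z)^\times$. Choosing a basis of $E[25]$ beginning with $Q$, the group $G_E(25)$ lies inside $\left\{\left(\begin{smallmatrix}a&*\\0&*\end{smallmatrix}\right):a\in\mathrm{im}\,\chi\right\}$, which has index at least $150$ in $\GL_2(\Z/25\Z)$; since $25\mid 150$ and this index divides $[\Aut_{\Z_5}(T_5(E)):\mathrm{im}(\overline{\rho}_{5,E})]$, this contradicts Theorem \ref{Autrhoprop}. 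Therefore $E(\Q[2])[5^\infty]=\Z/5\Z$.

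For (b) and (c) I would use Lemma \ref{lem-j-k_isog} together with Theorem \ref{thm-isog}. A point of order $3$ together with $P$ generates a $G_\Q$-stable cyclic subgroup of order $15$, so $E$ admits a rational $15$-isogeny; since there are only finitely many $\overline\Q$-isomorphism classes of elliptic curves with a rational $15$-isogeny, one checks for each of them, and for the finitely many relevant quadratic twists, that $E(\Q[2][2])$ contains no point of order $15$, for instance by factoring the $15$th division polynomial over $\Q[2][2]$ as in the proof of Lemma \ref{nad5nema11}; this settles (b). For (c), let $k$ be the largest integer with $E[2^k]\subseteq E(\Q[2])$; since $i\notin\Q[2]$ we have $k\le 1$. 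If $E(\Q[2])$ contains a point of order $2^m$ with either $k=0$ and $m\ge 2$, or $k=1$ and $m\ge 3$, then Lemma \ref{lem-j-k_isog} produces a rational $2^{m-k}$-isogeny, and composing with the rational $5$-isogeny of kernel $\langle P\rangle$ gives a rational $5\cdot 2^{m-k}$-isogeny with $5\cdot 2^{m-k}\ge 20$, contradicting Theorem \ref{thm-isog}. This leaves only the possibilities $E(\Q[2])_\tors\supseteq\Z/2\Z\oplus\Z/10\Z$ and $E(\Q[2])_\tors\supseteq\Z/2\Z\oplus\Z/20\Z$, both of which force $\Q(E[2])\subseteq\Q(\sqrt 2)$, so that $E$ or its twist by $\Q(\sqrt 2)$ has full rational $2$-torsion. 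When $P$ already lies in $\Q$ or in $\Q(\sqrt 2)$, this is excluded by Mazur's theorem together with the classification of torsion of curves over $\Q$ base-changed to quadratic fields \cite{kam,km,najman9}. The genuinely quartic case, where $\Q(P)=\Q[2][2]$, is the step I expect to be the main obstacle: here I would either invoke the classification of torsion of $E/\Q$ over quartic fields, or compute the Mordell--Weil groups of the modular curves $X_1(2,10)$ and $X_1(2,20)$ over $\Q[2][2]$ directly.
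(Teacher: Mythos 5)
Your decomposition (rule out $25$, rule out $3$, then control the $2$-part) is sound, and parts (a) and (b) are essentially correct — in fact (a) goes beyond the paper's own proof, which never explicitly addresses the possibility of a point of order $25$, so your Greenberg-style index argument is a welcome addition. Your approach to (b) is also valid but different from the paper's: you propose reducing to the finitely many $j$-invariants with a rational $15$-isogeny and factoring division polynomials, while the paper simply computes that $X_1(15)(\Q[2][2]) = X_1(15)(\Q)$ and uses the fact that a $3$-torsion point and a $5$-torsion point in $E(\Q[2])$ must both already live in $\Q[2][2]$ by Corollary~\ref{kojepolje}.

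However, part (c) contains a genuine gap, and you have correctly identified it: your isogeny argument narrows the remaining cases to $\Z/2\Z\oplus\Z/10\Z$ and $\Z/2\Z\oplus\Z/20\Z$ over the quartic layer $\Q[2][2]$, but does not close them. The paper's key idea, which you are missing, is a quadratic-twist descent. Since $\Q[2][2]/\Q[1][2]$ is a quadratic extension, one has a decomposition
\[
E(\Q[2][2])[5]\ \simeq\ E(\Q[1][2])[5]\ \oplus\ E^{(\delta)}(\Q[1][2])[5],
\]
for the quadratic twist $E^{(\delta)}$ of $E$ corresponding to $\Q[2][2]/\Q[1][2]$. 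Since quadratic twisting leaves the $2$-torsion module unchanged, if $E(\Q[2])_\tors\supseteq\Z/2\Z\oplus\Z/10\Z$ then one of $E$, $E^{(\delta)}$ already has $\Z/2\Z\oplus\Z/10\Z$ torsion over $\Q(\sqrt 2)$, and one simply checks $X_1(2,10)(\Q(\sqrt 2)) = X_1(2,10)(\Q)$. This replaces your ``genuinely quartic case'' with a quadratic-field computation, which is far more tractable than computing $X_1(2,10)$ or $X_1(2,20)$ over $\Q[2][2]$, and it also removes the need for $X_1(2,20)$ entirely, since ruling out $\Z/2\Z\oplus\Z/10\Z$ already rules out $\Z/2\Z\oplus\Z/20\Z$. (The paper also disposes of the cyclic $\Z/20\Z$ case by the $20$-isogeny theorem, which your argument captures implicitly via Lemma~\ref{lem-j-k_isog}.) In short: the structure of your proof is good, but without the twist trick the last step is much harder than it needs to be, and as you note yourself it remains an obstacle.
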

\begin{proof}
Suppose $E(\Q[2])_\tors$ has a point of order $5$. By Corollary \ref{kojepolje}, the point of order $P$ on $E$ has to be defined over $\Q[2][2]$.

If $E(\Q[2])_\tors$ had a point of order 15, then both a point of order 3 and of order 5 have to be defined over $\Q[2][2]$, so the 15-torsion point must already be defined over $\Q[2][2]$. But $X_1(15)(\Q)= X_1(15)(\Q[2][2])$, so there are no elliptic curves with a point of order 15 over $\Q[2][2]$.

Suppose now $E(\Q[2])_\tors$ has a point of order 10. Then $E(\Q)$ has a point of order 2. Suppose $E(\Q[2])_\tors\supseteq \Z/2\Z \oplus \Z/10\Z$; then it would follow that $E[2]$ is defined over $\Q[1][2]$ (as it has to be defined over some quadratic field if there is a point of order 2 over $\Q$). Since $E(\Q[2][2])[5]\simeq \Z/5\Z$ and $E(\Q[2][2])[5]\simeq E(\Q[1][2])[5] \oplus E^{(\delta)}(\Q[1][2])[5]$, for some quadratic twist (over $\Q[1][2]$) $E^{(\delta)}$ of $E$. So, since quadratic twisting does not change the $2$-torsion, either $E^{(\delta)}(\Q[1][2])\supseteq \Z/2\Z \oplus \Z/10\Z$ or $E(\Q[1][2])\supseteq \Z/2\Z \oplus \Z/10\Z$. But we compute $X_1(2,10)(\Q(\sqrt 2))=X_1(2,10)(\Q)$, so there are no elliptic curves over $\Q[1][2]=\Q(\sqrt 2)$ with $\Z/2\Z \oplus \Z/10\Z$ torsion.

Finally $E(\Q[2])[20]$ cannot be isomorphic to $\Z/20\Z$ as then $E$ would have a rational $20$-isogeny, which is impossible by Theorem \ref{thm-isog}.
\end{proof}

\begin{lemma}
If $E(\Q[2])_\tors$ has a point of order $9$, then \(E(\Q[2])_\tors \simeq \Z/9\Z\).
\end{lemma}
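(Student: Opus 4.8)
The plan is to follow the template of the two preceding lemmas: descend the point of order $9$ to the first layer $\Q[1][2]=\Q(\sqrt 2)$, pass to quadratic twists there, and then eliminate every remaining prime using Mazur's theorem, Theorem \ref{thm-isog}, and the lemmas already established in this section.

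First I would note that $\Q[2]$ is a real field, so $\zeta_3\notin\Q[2]$ and hence $E[3]\not\subseteq E(\Q[2])$ by Proposition \ref{prop-weil}; this forces $E(\Q[2])[3^\infty]$ to be cyclic, so a point $P$ of order $9$ satisfies $E(\Q(P))[9]\cong\Z/9\Z$, and since $v_2(\phi(9))=v_2(6)=1$, Corollary \ref{kojepolje} places $P$ in $E(\Q(\sqrt 2))$. Then, using that $E$ and its quadratic twist $E^{(2)}$ are isomorphic over $\Q(\sqrt 2)\subseteq\Q[2]$ (so $E(\Q[2])_\tors\cong E^{(2)}(\Q[2])_\tors$), together with the standard decomposition $E(\Q(\sqrt 2))[9]\cong E(\Q)[9]\oplus E^{(2)}(\Q)[9]$ of odd-order torsion under twisting, I would conclude that one of $E$, $E^{(2)}$ has a rational point of order $9$. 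Replacing $E$ by that curve, Mazur's theorem forces $E(\Q)_\tors=\Z/9\Z$; in particular $E$ carries a rational $9$-isogeny.

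It remains to rule out extra torsion at the primes $2,3,5,7$ (the only ones left open by the reductions made just before the lemma, since $E(\Q[2])[q^\infty]=E(\Q)[q^\infty]$ for all other $q$). For order $5$ and order $7$ I would simply quote the two preceding lemmas, whose conclusions ($\Z/5\Z$ or $\Z/10\Z$, resp.\ $\Z/7\Z$) contain no point of order $9$; alternatively one gets a rational $45$- or $63$-isogeny via Lemma \ref{lem-j-k_isog} and contradicts Theorem \ref{thm-isog}. For order $27$ I would again descend via Corollary \ref{kojepolje} (now $v_2(\phi(27))=v_2(18)=1$) and apply the twist decomposition to land a point of order $27$ on $E$ or $E^{(2)}$, contradicting Mazur. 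The only case needing a separate elementary argument is a point $T$ of order $2$: since $[\Q(T):\Q]\le 2$ it lies in $\Q(\sqrt 2)$, and if $T\notin E(\Q)$ then $T+T^\sigma$, with $\sigma$ generating $\Gal[\Q(\sqrt 2)][\Q]$, is a $\Q$-rational point of order $2$; so either way $E(\Q)$ would acquire $2$-torsion, impossible given $E(\Q)_\tors=\Z/9\Z$. Finishing up, $E(\Q[2])[3^\infty]$ is cyclic with no point of order $27$, hence equals $\Z/9\Z$, and assembling all the pieces gives $E(\Q[2])_\tors\simeq\Z/9\Z$. I expect the only mild obstacle to be making the descent-plus-twisting step airtight (in particular checking the hypothesis $E(\Q(P))[9]\cong\Z/9\Z$ needed to invoke Corollary \ref{kojepolje}); the rest is bookkeeping.
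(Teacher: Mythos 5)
Your argument is correct, but it follows a genuinely different route than the paper's. The paper rules out a point of order $27$ by descending $3P$ (of order $9$) to $\Q[1][2]$ via Corollary \ref{kojepolje}, quoting \cite{kam,km} to show there is no $27$-torsion over a quadratic field, and then invoking Proposition \ref{prop_pdiv} to conclude that the order-$9$ point cannot become divisible in a Galois $2$-power extension; it then rules out a point of order $18$ by descending the order-$9$ point and citing \cite[Theorem 2]{najman}. You instead push everything all the way down to $\Q$ using the odd-order twist decomposition $E(\Q(\sqrt 2))[n]\cong E(\Q)[n]\oplus E^{(2)}(\Q)[n]$ together with Corollary \ref{kojepolje}, so that Mazur's theorem alone kills orders $9\cdot q$ for $q=3$ (i.e.\ $27$) and also supplies the full structure $E(\Q)_\tors\simeq\Z/9\Z$, after which the primes $5$ and $7$ are handled by the two preceding lemmas and a short elementary trace argument handles $2$-torsion. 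The net effect is a more self-contained and uniform proof that avoids Proposition \ref{prop_pdiv} and the quadratic-field torsion classifications, at the modest cost of spelling out the twist decomposition and the parity check $v_2(\phi(9))=v_2(\phi(27))=1$ needed to invoke Corollary \ref{kojepolje} (which you correctly flag, and which goes through because $\Q[2]$ is totally real, so the relevant $q$-power torsion subgroups are cyclic by Proposition \ref{prop-weil}). Both approaches are sound; yours trades external references for a slightly longer but more elementary chain of reductions.
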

\begin{proof}
Suppose \(E(\Q[2])_\tors \) contained a point $P$ of order 27. The point $3P$ of order $9$ would have to be defined over $\Q[1][2]$ by Corollary \ref{kojepolje}. There cannot be any 27-torsion over $\Q[1][2]$ \cite{kam,km}. But on the other hand, the point of order $9$ cannot become divisible in a Galois extension of $\Q[1][2]$ of degree $2^n$ by Proposition \ref{prop_pdiv}.

Suppose \(E(\Q[2])_\tors \) contained a point of order 18. Then \(E(\Q[2])_\tors \) would have to contain a 2-torsion point, and a point $P$ of order $9$ would have to be defined over $\Q[1][2]$ by Corollary \ref{kojepolje}. So $E(\Q[1][2])$ would contain a point of order $18$. But there are no elliptic curves defined over $\Q$ with a point of order $18$ over a quadratic field by \cite[Theorem 2]{najman}.
\end{proof}

\begin{lemma}
If $E(\Q[2])_\tors$ has a point of order $12$, then \(E(\Q[2])_\tors \simeq \Z/12\Z\).
\end{lemma}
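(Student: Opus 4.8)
The plan is to peel off everything except the $2$- and $3$-parts, pin the $3$-part down to $\Z/3\Z$, use the rational isogeny bound to reduce the possible $2$-parts to a short list, and finally dispose of the surviving cases by descending into a layer of $\Q[2]$ and running a modular curve computation there. So suppose $E(\Q[2])_\tors$ contains a point $P$ of order $12$. It has no point of order $\ell$ for a prime $\ell\geq 5$: for $\ell\geq 11$ because $E(\Q[2])[\ell^\infty]=E(\Q)[\ell^\infty]$ (established above) together with Mazur's theorem, and for $\ell\in\{5,7\}$ because the two preceding lemmas would force $E(\Q[2])_\tors$ to be $\Z/5\Z$, $\Z/10\Z$ or $\Z/7\Z$, none of which has an element of order $12$. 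Hence $E(\Q[2])_\tors$ is a $\{2,3\}$-group. Its $3$-part is not $\Z/3\Z\oplus\Z/3\Z$ (else $\zeta_3\in\Q[2]$ by Proposition~\ref{prop-weil}, impossible as $\Q[2]$ is real), and it contains no point of order $9$ (that would make $E(\Q[2])_\tors\simeq\Z/9\Z$ by the previous lemma); so the $3$-part is exactly $\Z/3\Z$, and $E(\Q[2])_\tors\simeq\Z/3\Z\oplus A$ with $A$ the $2$-part. Now $A$ has rank at most $2$ (as $A[2]\subseteq E[2]$) and does not contain $\Z/4\Z\oplus\Z/4\Z$ (else $i\in\Q[2]$), so $A\simeq\Z/2^a\Z$ or $A\simeq\Z/2\Z\oplus\Z/2^b\Z$, with $a\geq 2$ (resp.\ $b\geq 2$) because $3P$ has order $4$. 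Finally, $\langle 4P\rangle$ is all of $E(\Q[2])[3]\simeq\Z/3\Z$, hence $G_\Q$-stable (as $\Q[2]/\Q$ is Galois), so $E$ admits a rational $3$-isogeny.

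Next apply Lemma~\ref{lem-j-k_isog} with $F=\Q[2]$ and $p=2$, and let $k$ be the largest integer with $E[2^k]\subseteq E(\Q[2])$. If $A$ is cyclic then $E[2]\not\subseteq E(\Q[2])$, so $k=0$ and $E$ admits a rational $2^a$-isogeny; if $A\simeq\Z/2\Z\oplus\Z/2^b\Z$ then $E[2]\subseteq E(\Q[2])$ but $E[4]\not\subseteq E(\Q[2])$, so $k=1$ and $E$ admits a rational $2^{b-1}$-isogeny. The kernel of this $2$-power isogeny and the kernel of the rational $3$-isogeny are $G_\Q$-stable of coprime orders, so their sum is $G_\Q$-stable and cyclic, and $E$ admits a rational isogeny of degree $3\cdot 2^a$ (resp.\ $3\cdot 2^{b-1}$). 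By Theorem~\ref{thm-isog} this degree is $\leq 19$ or lies in $\{21,25,27,37,43,67,163\}$, forcing $a=2$ in the cyclic case and $b\in\{2,3\}$ otherwise. Thus $A\in\{\Z/4\Z,\Z/2\Z\oplus\Z/4\Z,\Z/2\Z\oplus\Z/8\Z\}$, i.e.\ $E(\Q[2])_\tors\in\{\Z/12\Z,\Z/2\Z\oplus\Z/12\Z,\Z/2\Z\oplus\Z/24\Z\}$; the first is the claim, and it remains to exclude the other two.

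Since $\Z/2\Z\oplus\Z/12\Z$ embeds into $\Z/2\Z\oplus\Z/24\Z$, it suffices to show $E(\Q[2])_\tors$ cannot contain $\Z/2\Z\oplus\Z/12\Z$. Suppose it does; then, being one of the two remaining groups, $E(\Q[2])[4]\simeq\Z/2\Z\oplus\Z/4\Z$, and in particular $E[2]\subseteq E(\Q[2])$. Here $\Gal[\Q(E[2])][\Q]$ is a subgroup of $\GL_2(\F_2)\simeq S_3$ and a quotient of $\Gal[\Q[2]][\Q]\simeq\Z_2$, hence of order at most $2$, so $\Q(E[2])\subseteq\Q[1][2]=\Q(\sqrt 2)$. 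Moreover $E(\Q[2])[4]\simeq\Z/2\Z\oplus\Z/4\Z$ is $G_\Q$-stable, so its field of definition is a subfield of $\Q[2]$ cyclic over $\Q$ of degree dividing $|\Aut(\Z/2\Z\oplus\Z/4\Z)|=8$; as this automorphism group is non-cyclic, the degree is at most $4$, so $E(\Q[2])[4]\subseteq E(\Q[2][2])$, where $\Q[2][2]=\Q\left(\sqrt{2+\sqrt 2}\right)$. Together with $\Q(4P)\subseteq\Q(\sqrt 2)$ (Corollary~\ref{kojepolje}, since $v_2(\phi(3))=1$) this gives $\Z/2\Z\oplus\Z/12\Z\subseteq E(\Q[2][2])_\tors$. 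So it is enough to prove that no elliptic curve over $\Q$ acquires $\Z/2\Z\oplus\Z/12\Z$-torsion over this one quartic field: concretely, model the modular curve $X_1(2,12)$ and check with Magma that it has no non-cuspidal point over $\Q[2][2]$ with rational $j$-invariant (equivalently, first split off the $3$-part by a quadratic twist over $\Q(\sqrt 2)$ and invoke the known classifications of torsion of rational elliptic curves over quadratic and over quartic fields). This contradiction finishes the proof.

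Everything up to the isogeny count is routine; the genuine obstacle is this final step. One cannot use a blanket bound, because $\Z/2\Z\oplus\Z/12\Z$ really does occur for rational elliptic curves over suitable quadratic and quartic fields (e.g.\ by base-changing a curve with rational $\Z/12\Z$-torsion to a quartic field containing $\Q(E[2])$), so the argument must exploit that the relevant field is the specific cyclotomic layer $\Q\left(\sqrt{2+\sqrt 2}\right)$ — equivalently, it must keep track of whether the point of order $4$ lying over a $2$-torsion point of $E$ is already rational over $\Q(\sqrt 2)$ or only over $\Q\left(\sqrt{2+\sqrt 2}\right)$, and rule out the resulting level structure over that field by the explicit computation above.
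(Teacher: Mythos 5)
Your reduction to the three candidate groups $\Z/12\Z$, $\Z/2\Z\oplus\Z/12\Z$, $\Z/2\Z\oplus\Z/24\Z$ via the rational-isogeny bound is correct, and the observation that it suffices to rule out $\Z/2\Z\oplus\Z/12\Z$ is a clean simplification (the paper instead disposes of order-$24$ points separately by citing \cite[Theorem~1.2]{chou2}). Your Galois-theoretic step is also sound as far as it goes: $\Aut(\Z/2\Z\oplus\Z/4\Z)$ has order~$8$ and is non-cyclic, so any cyclic quotient of $\Z_2$ acting faithfully on $E(\Q[2])[4]$ has order at most $4$, hence $E(\Q[2])[4]\subseteq E(\Q[2][2])$.

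The genuine gap is the last step. You reduce the problem to ``show that no $E/\Q$ acquires $\Z/2\Z\oplus\Z/12\Z$ over the quartic field $\Q[2][2]=\Q\left(\sqrt{2+\sqrt 2}\right)$,'' and then assert that a Magma computation on $X_1(2,12)$ will settle it --- but you do not carry the computation out, and you yourself flag it as ``the genuine obstacle.'' As you note, there \emph{do} exist rational curves with $\Z/2\Z\oplus\Z/12\Z$ over some quartic fields, so the claim really does depend on the specific field, and a proof must include the verification. The paper avoids this entirely by invoking a sharper constraint than the automorphism-count bound: by \cite[Proposition~4.7]{gn}, a point $3P$ of order~$4$ lying over a rational $2$-torsion point is defined over a field whose Galois group over $\Q$ is $\Z/2\Z$, $\Z/2\Z\oplus\Z/2\Z$, or $D_4$ --- in particular never cyclic of order~$4$. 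Since $\Q(3P)\subseteq\Q[2]$ forces this group to be cyclic, it must be $\Z/2\Z$, i.e.\ $3P\in E(\Q[1][2])$. Combined with $4P$ and the extra $2$-torsion point also lying in $E(\Q[1][2])$, the whole group $\Z/2\Z\oplus\Z/12\Z$ would already sit inside $E(\Q(\sqrt 2))$, which contradicts the known classification \cite[Theorem~10]{najman9} of torsion of rational elliptic curves over quadratic fields --- no new computation needed. So where you stop at the quartic layer and would have to compute, the paper descends one step further (using structure you didn't exploit) and lands on an already-established result.
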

\begin{proof}
By the previous Lemma, \(E(\Q[2])_\tors\) cannot contain a point of order 9, so it remains to show that $E(\Q[2])_\tors$ does not have full $2$-torsion and that it doesn't have a point of order 24. The fact that there are no points of order 24 follows from the fact that there are no points of order 24 over $\Q^{ab}$ of which $\Q[2]$ is a subfield by \cite[Theorem 1.2]{chou2}.

Suppose now that $E(\Q[2])_\tors\simeq \Z/2\Z \oplus \Z/12\Z$, and let $E(\Q[2])_\tors=\diam{P,Q}$, where $P$ is of order $12$ and $Q$ is of order $2$. We have $2E(\Q[2])_\tors=\diam{2P}\simeq \Z/6\Z$ is a $G_{\Q}$-invariant subgroup, and hence $(6P)^\sigma=6P$ for all $\sigma \in G_{\Q}$, i.e. $6P$ is defined over $\Q$. By \cite[Proposition 4.7]{gn}, $3P$ has to be defined over an extension of $\Q$ which has Galois group $\Z/2\Z, \Z/2\Z \oplus \Z/2\Z$ or $D_4$. So we conclude that it has to be $\Z/2\Z$ (since by assumption $\Q(3P)$ is defined over $\Q[2]$). So the point $3P$ of order 4 is defined over $\Q[1][2]$.

The point $4P$ is also defined over $\Q[1][2]$ by Corollary \ref{kojepolje}. Since $6P$ is defined over $\Q$, then $Q$ has to be defined over $\Q[1][2]$.

So since $4P, 3P$ and $Q$ are all defined over $\Q[1][2]$, we conclude that $E(\Q[1][2])_\tors\simeq \Z/2\Z  \oplus \Z/12\Z$, but this is impossible by \cite[Theorem 10.]{najman9}.
\end{proof}

Finally, we have the following result that controls the $2$-power torsion.

\begin{tm}\cite[Theorem 1]{fuj}
For an elliptic curve $E/\Q$, $E(\Q[2])[2^\infty]\subseteq \Z/2\Z \oplus \Z/8\Z$.
\end{tm}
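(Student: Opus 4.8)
The plan is to reduce the claim to showing that $E(\Q[2])$ has no point of order $16$, and then to rule this out by a computation with the image of $\rho_{E,16}$, modelled on the proof of Lemma~\ref{nad5nema5} with the prime $5$ replaced by $2$. For the reduction, note that $\Q[2]$ is the fixed field of complex conjugation in $\Q(\zeta_{2^\infty})$, hence totally real; in particular $i=\zeta_4\notin\Q[2]$, so $E[4]\not\subseteq E(\Q[2])$ by Proposition~\ref{prop-weil}, and $E(\Q[2])[2^\infty]\simeq\Z/2^a\Z\oplus\Z/2^b\Z$ with $a\le b$ and $a\le1$. Thus the statement is equivalent to $b\le3$, i.e.\ to the non-existence of a point $P\in E(\Q[2])$ of order $16$.

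Assume such a $P$ exists. By Lemma~\ref{lem-j-k_isog} (with $F=\Q[2]$, which is Galois over $\Q$) together with Theorem~\ref{thm-isog}, which also excludes $32$-isogenies, $E$ admits a rational $8$- or $16$-isogeny; this will be used below. In the principal case $E(\Q(P))[16]\simeq\Z/16\Z$, the lemma preceding Corollary~\ref{kojepolje}, combined with the fact that $\Gal[\Q(P)][\Q]$ is cyclic (being a quotient of $\Gal[\Q[2]][\Q]\simeq\Z_2$), shows that $\Gal[\Q(P)][\Q]$ is a cyclic subgroup of $(\Z/16\Z)^\times\simeq\Z/2\Z\times\Z/4\Z$, hence of order at most $4$, so $P$ is defined over the quartic field $\Q[2][2]=\Q(\zeta_{16})^+=\Q(\sqrt{2+\sqrt2})$. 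Complete $P$ to a basis $\{P,Q\}$ of $E[16]$. Since $G_{\Q[2][2]}$ fixes $P$, every $g\in\rho_{E,16}(G_{\Q[2][2]})$ has the form $\left(\begin{smallmatrix}1&*\\0&*\end{smallmatrix}\right)$, and by Proposition~\ref{prop-detgal} we have $\det\rho_{E,16}(G_{\Q[2][2]})\simeq\Gal[\Q(\zeta_{16})][\Q(\zeta_{16})\cap\Q[2][2]]=\Gal[\Q(\zeta_{16})][\Q(\zeta_{16})^+]$, a group of order $2$; hence $|\rho_{E,16}(G_{\Q[2][2]})|\le32$. As $\Q(E[16])\cap\Q[2][2]$ is a subfield of the quartic field $\Q[2][2]$, the index $[\rho_{E,16}(G_\Q):\rho_{E,16}(G_{\Q[2][2]})]=[\Q(E[16])\cap\Q[2][2]:\Q]$ divides $4$, so $|\rho_{E,16}(G_\Q)|\le128$ and
\[
[\GL_2(\Z/16\Z):\rho_{E,16}(G_\Q)]\ \ge\ \frac{|\GL_2(\Z/16\Z)|}{128}\ =\ 192 .
\]
(The degenerate case $E[2]\subseteq E(\Q[2])$, which forces $E(\Q)[2]\ne0$, requires a separate but parallel analysis — carried out modulo $16$ over $\Q[1][2]$ after a quadratic twist, as in the lemmas above, or via the modular curve $X_1(2,16)$.)

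It remains to contradict this. The mod-$16$ index above divides $[\Aut_{\Z_2}(T_2(E)):im(\overline{\rho}_{2,E})]$, so one wants the $p=2$ analogue of Theorem~\ref{Autrhoprop} — a bound of Greenberg on the power of $2$ dividing that index — to forbid an index as large as $192$. I expect this quantitative input to be the main obstacle: Greenberg's results are weaker at $p=2$ than at $p\ge5$, so one must verify that the available $2$-adic statement really suffices (and deal with the degenerate case over $\Q$ and $\Q[1][2]$ likewise). Should it not suffice, the fallback is to use the rational $8$- or $16$-isogeny to confine $j(E)$ to one of the genus-$0$ families $X_0(8)(\Q)$ and $X_0(16)(\Q)$, and then to check — by factoring $16$-division polynomials over $\Q(\zeta_{16})^+$, equivalently by determining the $\Q(\zeta_{16})^+$-rational points of the appropriate quotient of $X_1(16)$ — that no point of order $16$ occurs. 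The delicate feature of that route is that $\Z/16\Z$ genuinely does arise as a torsion group over some quartic fields, so the argument must use in an essential way both the rationality of the isogeny and the specific arithmetic of $\Q(\zeta_{16})^+$; alternatively one may appeal to the classification of $2$-adic Galois images of elliptic curves over $\Q$ due to Rouse and Zureick-Brown.
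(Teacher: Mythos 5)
The paper does not prove this statement: it is quoted verbatim as \cite[Theorem 1]{fuj} and used as a black box, so there is no internal proof to compare against. Your attempt is therefore an independent reconstruction modelled on the $p=5$ argument of Lemma~\ref{nad5nema5}, and the natural comparison is with Fujita's original proof, which proceeds by explicit computations with Kubert--Tate normal forms and division polynomials rather than by Galois-image index bounds.

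Your reductions are correct: $\Q[2]$ totally real forces $E(\Q[2])[2^\infty]\simeq\Z/2^a\Z\oplus\Z/2^b\Z$ with $a\le1$; in the principal case the unnumbered lemma before Corollary~\ref{kojepolje} plus the cyclicity of every quotient of $\Z_2$ puts a hypothetical point of order $16$ over $\Q[2][2]$; and the resulting bound $|\rho_{E,16}(G_\Q)|\le128$, hence index $\ge192$ in $\GL_2(\Z/16\Z)$, is arithmetically sound. However, the argument stalls at exactly the point you flag: there is no $p=2$ analogue of Theorem~\ref{Autrhoprop} strong enough to rule out $2^6\mid[\Aut_{\Z_2}(T_2(E)):\mathrm{im}(\overline{\rho}_{2,E})]$. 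Greenberg's bound is stated and cited only at $p=5$; at $p=2$ the $2$-adic image can be quite small, which is precisely why the Rouse--Zureick-Brown classification --- which you relegate to an ``alternative'' --- is in fact the necessary tool on this route, not a fallback. Beyond that, the degenerate case $E[2]\subseteq E(\Q[2])$ (to be handled over $\Q(\sqrt2)$ after a twist, or via $X_1(2,16)$) and the Chabauty-type computation with $X_1(16)$ over $\Q(\zeta_{16})^+$ are stated but not carried out, and you correctly note that the latter is genuinely delicate since $\Z/16\Z$ does arise over some quartic fields; that verification is the real content of a proof along these lines. As written, this is a credible plan whose central step --- either the check through the Rouse--Zureick-Brown list that no admissible $2$-adic image has the required shape, or the explicit modular-curve computation over $\Q(\zeta_{16})^+$ and $\Q(\sqrt2)$ --- remains to be done.
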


These results combined prove Theorem \ref{p2result}.

\section{Proof of Theorem \ref{p3result}}\label{proofofp3result}
Combining Theorem \ref{exact_degree}, Corollary \ref{torsiondestroyer} and the following facts:
\begin{itemize}
\item \(p^2 - 1\) isn't a power of \(3\) for any odd prime \(p\) (it's divisible by \(2\)),
\item \(p-1\) isn't a power of \(3\) for any odd prime \(p\) (it's divisible by \(2\)),
\end{itemize}
we can conclude that \(E(\Q[3])[q]=E(\Q)[q]\) for all primes \(q \neq 2, 3, 7, 13, 19\).

\begin{lemma}
\(E(\Q[3])[19]=\{O\}.\)
\end{lemma}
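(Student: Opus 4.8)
The plan is to mimic the argument already used for the prime $11$ over $\Q[5]$ (Lemma \ref{nad5nema11}) and for $13$ and $17$ over $\Q[2]$: first kill higher prime powers using Corollary \ref{torsiondestroyer}, then bound the field of definition of a putative $19$-torsion point using Theorem \ref{exact_degree} together with Corollary \ref{kojepolje}, and finally rule out the remaining possibilities by an explicit computation on the relevant modular curve.

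First I would note that by Corollary \ref{torsiondestroyer} the only prime power of the form $19^n$ that can occur as the order of a torsion point in $E(\Q[3])$ is $19$ itself, so it suffices to show $E(\Q[3])[19]=\{O\}$. Suppose $P\in E(\Q[3])$ has order $19$. By Theorem \ref{exact_degree}, the possible degrees $[\Q(P):\Q]$ for a point of order $19$ are (case (3) with $p=19$) among $9,18,\tfrac{19\cdot 18}{2}=171,19\cdot 18=342$, together with the universally occurring $p^2-1=360$ and possibly the cases in (4). By Corollary \ref{cor-weil} we have $E(\Q[3])[19]\simeq\Z/19\Z$, so $\Q(P)\subseteq\Q[3]$ and $\Gal[\Q(P)][\Q]$ is cyclic of $3$-power order; by Corollary \ref{kojepolje}, $\Q(P)\subseteq\Q[m][3]$ with $m=v_3(\phi(19))=v_3(18)=2$. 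Hence $[\Q(P):\Q]$ is a power of $3$ dividing $9$, so $P$ is defined over $\Q[2][3]$, the degree-$9$ subfield of $\Q(\zeta_{27})$.

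Now an elliptic curve $E/\Q$ with a point of order $19$ over the degree-$9$ cyclic field $\Q[2][3]$ has, in particular, a $\Q$-rational $19$-isogeny (e.g.\ via Lemma \ref{lem-j-k_isog}, or directly since the cyclic subgroup $\diam{P}$ of order $19$ is $G_\Q$-stable). By Theorem \ref{thm-isog} together with the explicit determination of the $j$-invariants of curves with a rational $19$-isogeny, there is (up to twist) essentially one such curve: $j(E)=-2^{15}\cdot 3^3$. I would then take a Weierstrass model with this $j$-invariant — the particular quadratic twist is irrelevant, since a point of order $19$ over $\Q[2][3]$ survives in any twist only if the twisting character restricted to $G_{\Q[2][3]}$ acts trivially, and in any case one can check all twists are handled simultaneously — factor the $19$th division polynomial over $\Q[2][3]$, and verify that the smallest degree of an irreducible factor exceeds $9$ (indeed one expects it to be $18$ or larger). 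Equivalently, one can compute $X_1(19)(\Q[2][3])$ directly in Magma \cite{magma}: $X_1(19)$ has genus $7$, and one checks that its only rational points over $\Q[2][3]$ are the cusps, so there is no elliptic curve with a point of order $19$ over $\Q[2][3]$, a contradiction.

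The main obstacle is the final computational step: $X_1(19)$ has genus $7$, so one cannot simply read off its rational points, and working with a degree-$9$ number field makes the division-polynomial factorization moderately heavy. The cleanest route is probably the division-polynomial approach with the single $j$-invariant $-2^{15}3^3$ coming from Theorem \ref{thm-isog}, which reduces the problem to factoring one explicit polynomial of degree $\tfrac{19^2-1}{2}=180$ over a fixed degree-$9$ field and observing that no irreducible factor has degree dividing $9$; this is entirely routine for Magma. One should be slightly careful about the twist: a priori the curve with a $19$-isogeny is only determined up to quadratic twist, but since the degree of the field over which a $19$-torsion point is defined changes only by the splitting behaviour of the twisting quadratic field inside $\Q[2][3]$, and $\Q[2][3]$ contains no quadratic subfield (its Galois group is cyclic of order $9$), the twist is immaterial, exactly as in the order-$13$ and order-$17$ lemmas above.
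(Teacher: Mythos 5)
Your proposal follows essentially the same route as the paper: use Corollary \ref{kojepolje} to conclude that a point of order $19$ must be defined over $\Q[2][3]$, note that $E$ then has a rational $19$-isogeny forcing $j(E)=-2^{15}\cdot 3^3$ by the classification accompanying Theorem \ref{thm-isog}, and rule this out by a division-polynomial computation over $\Q[2][3]$; the twist-independence is also handled the same way (though the cleanest justification is simply that quadratic twisting preserves $x$-coordinates of torsion, so the division polynomial is literally the same for all twists). One small imprecision: you phrase the final check as ``factor the $19$th division polynomial over $\Q[2][3]$ and verify that the smallest degree of an irreducible factor exceeds $9$'' (and later ``no irreducible factor has degree dividing $9$''), which conflates two settings. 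Over $\Q[2][3]$ the correct check is simply that the polynomial has no root (no degree-$1$ factor), which is what the paper verifies; the ``degree dividing $9$'' criterion is the one you would apply to the factorization over $\Q$. Either computation works, but the criterion must match the base field of the factorization.
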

\begin{proof}
From Corollary \ref{kojepolje} we deduce that a point of order \(19\) on $E/\Q$ must be defined over \(\Q[2][3]\). Also $E$ must have a rational $19$-isogeny. There is only one family of quadratic twists (with $j$-invariant $-2^{15}\cdot 3^3$), with complex multiplication by $\Z[(1+\sqrt{-19})/2]$. We check that the \(19\)\textsuperscript{th} division polynomials of these elliptic curves with \(19\)-isogeny don't have a root over the field \(\Q[2][3]\). It is enough to check this for one curve with $j$-invariant $-2^{15}\cdot 3^3$, as if  the \(19\)\textsuperscript{th} division polynomial of this one curve with this $j$-invariant doesn't have a root over \(\Q[2][3]\), then neither does any quadratic twist of $E$. So \(E(\Q[3])[19]=\{O\}\).
\end{proof}

\begin{lemma}
\(E(\Q[3])[13]=\{O\}\)
\end{lemma}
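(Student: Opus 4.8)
First I would argue, exactly as in the order-$19$ case, that a point $P$ of order $13$ on $E/\Q$ lying in $E(\Q_{\infty,3})$ must in fact be defined over a low layer. By Corollary~\ref{kojepolje}, $\Q(P) \subseteq \Q_{m,3}$ with $m = v_3(\phi(13)) = v_3(12) = 1$, so $P \in E(\Q_{1,3}) = E(\Q(\zeta_9)^\Delta)$, a cubic field. Moreover, by Theorem~\ref{exact_degree} the only possible degrees $[\Q(P):\Q]$ for a point of order $13$ are $3,4,6,12,24,39,\dots$; intersecting with "divides $3$" forces $[\Q(P):\Q] = 3$. In particular $E$ admits a rational $13$-isogeny (by Lemma~\ref{lem-j-k_isog}, or directly since $G_\Q$ acts on $\diam P$), and by Theorem~\ref{thm-isog} such curves are well understood: the $j$-invariants with a rational $13$-isogeny form a single rational parametrized family $X_0(13) \cong \mathbb{P}^1$, but those with a rational point of order exactly $13$ over some field correspond to the relevant twist, and crucially $X_1(13)$ has genus $0$ with no non-cuspidal rational points, so there is no elliptic curve over $\Q$ with a rational $13$-torsion point.

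The remaining case is the one where $P$ itself is not rational but is defined over the cubic field $\Q_{1,3}$. Here I would take the same computational approach used for $q = 13$ over $\Q[2]$ and for $q = 19$: enumerate (a model for each member of) the family of curves $E/\Q$ with a rational $13$-isogeny — this is a one-parameter family coming from $X_0(13)$ — and for a suitably chosen curve in each twist class factor the $13$th division polynomial $\psi_{13}(x)$ over $\Q_{1,3}$, checking that no irreducible factor has degree dividing $3$ (equivalently, $\psi_{13}$ has no root giving an $x$-coordinate of a point of order $13$ rational over $\Q_{1,3}$). As in the $19$-case, quadratic twisting does not affect whether the $x$-coordinates of the $13$-torsion land in a given field, so it suffices to check one representative per $j$-invariant; and since the degree of a point of order $13$ over $\Q$ is always a multiple of $3$ only in the excluded rational case, one really only needs to rule out $P \in E(\Q_{1,3}) \setminus E(\Q)$. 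Alternatively, and perhaps more cleanly, one can invoke the classification of possible $\Q(P)$ for order-$13$ points together with the observation that $\Q_{1,3}/\Q$ is cyclic: a cyclic cubic extension cannot contain $\Q(\zeta_{13})$ (as $12 \nmid 3$), so Lemma~\ref{ciklicko_prosirenje} does not immediately apply since $3 \nmid 12$ is false — wait, $3 \mid 12$, so that route is blocked, which is exactly why the explicit division-polynomial computation is needed.

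The main obstacle is therefore the genuinely computational step: one must verify, for the finitely-many $j$-invariants (or the one-parameter family) of curves with a rational $13$-isogeny, that $\psi_{13}$ factors over the specific cubic field $\Q_{1,3} = \Q(\zeta_9)^+$ with all factors of degree coprime to $3$ — or more precisely with no degree-$1$ or degree-$3$ factor corresponding to an actual order-$13$ point. Magma handles this routinely once one has explicit models (e.g.\ via the universal curve over $X_0(13)$), so the difficulty is purely in setting up the correct family and twist representatives rather than in any conceptual subtlety. I expect the conclusion $E(\Q[3])[13] = \{O\}$ to follow immediately once that factorization check is complete.
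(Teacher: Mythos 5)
Your first step — reducing to $P \in E(\Q_{1,3})$ via Corollary~\ref{kojepolje} and then pinning down $[\Q(P):\Q]=3$ by intersecting Theorem~\ref{exact_degree} with the divisors of $3$ — is correct and matches the paper. You are also right that Lemma~\ref{ciklicko_prosirenje} is unavailable here because $3 \mid 12$. (Minor factual slip: $X_1(13)$ has genus $2$, not genus $0$; the paper uses the genus-$2$ model $y^2 = x^6 - 2x^5 + x^4 - 2x^3 + 6x^2 - 4x + 1$. Your conclusion that it has no non-cuspidal rational points is still correct, by Mazur.)

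The real gap is in your proposed computational step. In the order-$19$ case the paper's division-polynomial check works precisely because $X_0(19)$ has genus $1$ with only finitely many rational points — there is a \emph{single} $j$-invariant ($-2^{15}\cdot 3^3$) to test, so one factors one $19$th division polynomial over $\Q_{2,3}$. Here $X_0(13)$ has genus $0$ with infinitely many rational points, so "enumerate the family of curves with a rational $13$-isogeny and factor $\psi_{13}$ over $\Q_{1,3}$" is not a finite computation: there is a full $1$-parameter family $E_h$, and asking for which $h \in \Q$ the relevant degree-$6$ factor of $\psi_{13}$ acquires a root over $\Q_{1,3}$ is \emph{exactly} the question of which $\Q_{1,3}$-points of $X_1(13)$ lie above $\Q$-points of $X_0(13)$, i.e.\ the question of determining $X_1(13)(\Q_{1,3})$. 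You would be going in a circle. The paper cuts through this by working directly with the genus-$2$ curve $X_1(13)$: one shows that $J_1(13)(\Q(\zeta_9)^+)$ has rank $0$ and torsion $\Z/19\Z$, and deduces $X_1(13)(\Q_{1,3}) = X_1(13)(\Q)$, which consists only of cusps. That Jacobian computation (a finite Magma check on a fixed genus-$2$ curve) is the essential content you are missing; without it your plan does not terminate.
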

\begin{proof}
From Corollary \ref{kojepolje}, a point of order \(13\) can be defined only over \(\Q[1][3] = \Q(\zeta_9)^+\) (the maximal real subfield of \(\Q(\zeta_9)\)). The modular curve \(X_1(13)\) is a curve of genus \(2\) with the following model (as we can see in \cite{najman9}):
\[y^2 = x^6 - 2x^5 + x^4 - 2x^3 + 6x^2 - 4x + 1.\]
The rank of the Jacobian of this curve over \(\Q(\zeta_9)^+\) is $0$, and the torsion is \(\Z/19\Z\), and we easily check that $X_1(13)(\Q[1][3])=X_1(13)(\Q)$, so there are no elliptic curves with $13$-torsion over $\Q[1][3]$. So \(E(\Q[3])[13]=\{O\}\).
\end{proof}

\begin{lemma}
$E(\Q[3])[5^\infty]=E(\Q)[5^\infty].$
\end{lemma}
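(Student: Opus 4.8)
The plan is to reduce the whole statement to a single application of Corollary~\ref{kojepolje}, exploiting that $3\nmid\phi(5^k)$ for $k\le 2$, so that a point of $5$-power order whose field of definition lies inside $\Q[3]$ is forced to be defined over $\Q$ itself.

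First I would record what the general results already hand us. By Corollary~\ref{torsiondestroyer} (applied with $p=3$, $q=5$) the only powers of $5$ that can occur as the order of a torsion point of $E(\Q[3])$ are $5$ and $25$; in particular there is no $125$-torsion. By Corollary~\ref{cor-weil}, $E(\Q[3])[5]$ is trivial or cyclic of order $5$. Hence $E(\Q[3])[5^\infty]$ is one of $\{O\}$, $\Z/5\Z$, $\Z/25\Z$, and it suffices to prove: (i) $E(\Q[3])$ has no point of order $25$, and (ii) a point of order $5$ in $E(\Q[3])$ is already in $E(\Q)$.

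The main step does both at once. Let $P\in E(\Q[3])$ have order $5^k$ with $k\in\{1,2\}$. Since $\Q(P)\subseteq\Q[3]$ and $\Q[3]/\Q$ is abelian, $\Q(P)/\Q$ is Galois; by Corollary~\ref{cor-weil} we have $E(\Q(P))[5]\simeq\Z/5\Z$, and since $E(\Q(P))$ contains the order-$5^k$ point $P$, the group $E(\Q(P))[5^k]$ is cyclic of order exactly $5^k$. This is precisely the hypothesis needed so that $\Gal(\Q(P)/\Q)$ acts faithfully on $\langle P\rangle$, whence Corollary~\ref{kojepolje} gives $\Q(P)\subseteq\Q[m][3]$ with $m=v_3(\phi(5^k))=v_3(5^{k-1}\cdot 4)=0$, i.e.\ $\Q(P)=\Q[0][3]=\Q$. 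This settles (ii) directly, and it shows that an order-$25$ point of $E(\Q[3])$ would be a rational point of order $25$, contradicting Mazur's theorem~\cite{mazur2}; hence (i) holds. Combined with the first paragraph this yields $E(\Q[3])[5^\infty]=E(\Q)[5^\infty]$.

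I do not expect a real obstacle: once Corollary~\ref{kojepolje} is available the argument is essentially a one-line computation with $\phi$, and the only point requiring care is verifying the cyclicity hypothesis $E(\Q(P))[5^k]\simeq\Z/5^k\Z$, which is exactly where Corollary~\ref{cor-weil} is used. If one prefers to handle the order-$25$ case without that bookkeeping, Proposition~\ref{prop_pdiv} gives an alternative route: knowing from (ii) that $5P\in E(\Q)$ and that $\Q(P)/\Q$ is Galois, it forces $[\Q(P):\Q]\mid 25$, while $[\Q(P):\Q]$ is a power of $3$, so $[\Q(P):\Q]=1$, again contradicting Mazur.
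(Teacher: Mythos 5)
Your proof is correct, but it takes a genuinely different route from the paper's. The paper argues in two cases: when $E(\Q)[5]=\{O\}$ it invokes Theorem~\ref{exact_degree} (the classification of possible degrees $[\Q(P):\Q]$ for a point $P$ of order~$5$, none of which is a nontrivial power of~$3$), and when $E(\Q)[5]\neq\{O\}$ it applies Proposition~\ref{prop_pdiv} to force $[\Q(P):\Q]\mid 25$ for a hypothetical point $P$ of order~$25$, which is incompatible with $[\Q(P):\Q]$ being a $3$-power. You instead give a single unified argument through Corollary~\ref{kojepolje}: since $v_3(\phi(5^k))=0$ for $k\le 2$, the embedding $\Gal(\Q(P)/\Q)\hookrightarrow(\Z/5^k\Z)^\times$ forces $\Q(P)=\Q$ for any $5$-power torsion point $P\in E(\Q[3])$, and then Mazur rules out rational $25$-torsion. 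This sidesteps the heavy Theorem~\ref{exact_degree} entirely (you still lean on Corollary~\ref{torsiondestroyer} to exclude $125$-torsion, though in fact $v_3(\phi(125))=0$ as well, so even that is not strictly needed). A worthwhile bonus of your write-up is that you explicitly verify the cyclicity hypothesis $E(\Q(P))[5^k]\simeq\Z/5^k\Z$ needed for the lemma behind Corollary~\ref{kojepolje}, which that corollary's statement in the paper silently omits; Corollary~\ref{cor-weil} supplies it here because both $3$ and $5$ are odd. Your suggested fallback via Proposition~\ref{prop_pdiv} is essentially the paper's treatment of the second case.
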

\begin{proof}
From Theorem \ref{exact_degree}, we see that if $E(\Q)[5]=\{O\}$, then $E(\Q[3])[5]=\{O\}$. If $E(\Q)[5]\neq \{O\}$, then by Proposition \ref{prop_pdiv}, we see that $E(\Q)[5^\infty]=E(\Q[3])[5^\infty].$
\end{proof}

\begin{lemma}
If $E(\Q[3])_\tors$ has a point of order $7$, then $E(\Q[3])_\tors\simeq \Z/7\Z$ or $\Z/21\Z$
\end{lemma}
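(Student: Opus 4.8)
The plan is to locate the point of order $7$ and then show that essentially nothing can be adjoined to it. By Corollary~\ref{kojepolje}, a point $P$ of order $7$ in $E(\Q[3])$ is defined over the first layer $\Q[1][3]=\Q(\zeta_9)^+$, since $v_3(\phi(7))=v_3(6)=1$. As $E[7]\nsubseteq E(\Q[3])$ by Corollary~\ref{cor-weil}, Lemma~\ref{lem-j-k_isog} (with $F=\Q[3]$, $p=7$, so $k=0$ and $j=1$) shows that $E$ admits a rational $7$-isogeny. By the results already proved in this section, only the primes $2,3,5,7$ can contribute to $E(\Q[3])_\tors$, so it is enough to show that its $2$-part and $5$-part are trivial, its $3$-part is $\{O\}$ or $\Z/3\Z$, and its $7$-part is exactly $\Z/7\Z$: this forces $E(\Q[3])_\tors$ to be a subgroup of $\Z/21\Z$ containing a point of order $7$, hence $\Z/7\Z$ or $\Z/21\Z$.

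The $7$-part is immediate: there is no $49$-torsion by Corollary~\ref{torsiondestroyer}, and it is cyclic by Corollary~\ref{cor-weil}. The $3$-part is cyclic by Corollary~\ref{cor-weil}, and a point of order $9$ in $E(\Q[3])$ would, by Lemma~\ref{lem-j-k_isog}, force a rational $9$-isogeny; combined with the rational $7$-isogeny this gives a rational cyclic $63$-isogeny, impossible by Theorem~\ref{thm-isog}. For the $5$-part: a point of order $5$ in $E(\Q[3])$ lies in $E(\Q)$ (by Corollary~\ref{kojepolje}, or because $E(\Q[3])[5^\infty]=E(\Q)[5^\infty]$), so $E$ has a rational $5$-isogeny, and hence a rational cyclic $35$-isogeny, again impossible by Theorem~\ref{thm-isog}. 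All of this is routine.

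The heart of the matter is excluding a point of order $2$. Such a point lies in $E(\Q)$ by Corollary~\ref{kojepolje} (as $v_3(\phi(2))=0$), so $E$ has a rational $2$-isogeny and, together with the rational $7$-isogeny, a rational cyclic $14$-isogeny. By the classification of rational isogenies underlying Theorem~\ref{thm-isog}, the only elliptic curves over $\Q$ with a rational $14$-isogeny have $j$-invariant $-3^3\cdot 5^3$ or $3^3\cdot 5^3\cdot 17^3$, the CM $j$-invariants of the orders of discriminant $-7$ and $-28$; thus $E$ is a quadratic twist of one of two fixed curves. For each of these I would factor the $7$th division polynomial over $\Q[1][3]=\Q(\zeta_9)^+$ and check that it has no root there. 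Since a quadratic twist only rescales the roots of the $7$th division polynomial by an element of $\Q^\times$, the fields generated by its factors are unchanged, so the same holds for every $E/\Q$ with one of those two $j$-invariants; hence no such curve has a point of order $7$ defined over $\Q(\zeta_9)^+$, contradicting the first paragraph. Therefore $E(\Q[3])$ contains no point of order $2$, which completes the plan.

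The only genuinely nontrivial step is this last one: Theorem~\ref{thm-isog} alone does not rule out a rational $14$-isogeny, so one must actually inspect the two CM curves that carry one and verify that their $7$-torsion does not descend to the particular cubic field $\Q(\zeta_9)^+$ — equivalently, that $X_1(14)(\Q(\zeta_9)^+)$ consists only of cusps.
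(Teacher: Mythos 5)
Your proof is correct, and it shares the paper's skeleton: locate the order-$7$ point in $E(\Q[1][3])$ via Corollary~\ref{kojepolje}, bound the $7$-part with Corollary~\ref{torsiondestroyer}, and use the resulting rational $7$-isogeny together with Theorem~\ref{thm-isog} to rule out the $5$-part and any point of order~$9$. The genuine divergence is how you exclude $\Z/14\Z$. The paper observes that a point of order $14$ over $\Q[3]$ forces $\Z/14\Z\subseteq E(\Q[1][3])$ and then carries out a rank-plus-torsion computation to show $X_1(14)(\Q[1][3])=X_1(14)(\Q)$. You instead combine the rational $2$-isogeny and $7$-isogeny into a rational cyclic $14$-isogeny and invoke the classification of such isogenies to reduce to the two CM $j$-invariants $-3^3\cdot 5^3$ and $3^3\cdot 5^3\cdot 17^3$, then propose factoring the $7$th division polynomial of a representative curve for each over $\Q(\zeta_9)^+$; your observation that $x$-coordinates of torsion are twist-invariant is exactly what makes this a check on two curves rather than two twist families. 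Both computations are stated rather than exhibited, so the two proofs sit at the same level of rigor; yours needs more classification as input but replaces a Jacobian computation over a cubic field with a single polynomial factorization, and makes it transparent that the obstruction is the ramification at $7$ tied to the CM. One small imprecision: your closing ``equivalently'' is too strong --- your check handles only the fibres of $X_1(14)\to X_0(14)$ above its two rational non-cuspidal points, not all of $X_1(14)(\Q(\zeta_9)^+)$, which is weaker than the paper's statement but still exactly what the lemma requires since $E$ is defined over $\Q$.
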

\begin{proof}
By Proposition \ref{prop_pdiv}, the $7$-power torsion can grow only if $E(\Q)[7]=\{O\}$. We now determine, for an $E/\Q$ such that $E(\Q)[7]=\{O\}$ and $E(\Q[3])[7]\neq\{O\}$, what are the possible torsion groups of $E(\Q[3])$. By Corollary \ref{kojepolje} we conclude that a point of order 7 appears over $\Q[1][3]$.

We first note that $E(\Q[3])[7^\infty]\simeq \Z/7\Z$, as there cannot be any $49$-torsion by Proposition \ref{torsiondestroyer}. Also $E(\Q[3])$ obviously cannot contain a subgroup isomorphic to $\Z/35\Z$ due to Theorem \ref{thm-isog}.

Suppose $E(\Q[3])$ contains $\Z/14\Z$. Then $E(\Q)$ has a point of order 2, and so $E(\Q[1][3])$ contains a subgroup isomorphic to $\Z/14\Z$. But we compute that $X_1(14)(\Q[1][3])=X_1(14)(\Q)$, which shows that this is impossible.

Suppose $E(\Q[3])$ contains $\Z/21\Z$. First note that then $E(\Q[3])_\tors\simeq \Z/21\Z$, as a larger torsion group would contradict Theorem \ref{thm-isog}. By Corollary \ref{kojepolje}, we see that the $21$-torsion point has to be defined over $\Q[1][3]$.  By \cite[Theorem 1]{najman}, there is a unique such curve $E=162b1$ satisfying this property.
\end{proof}

\begin{lemma}
If $E(\Q)[2^\infty]\neq E(\Q[3])[2^\infty]$, then $E(\Q[3])[2^\infty]\simeq \Z/2\Z \oplus \Z/2\Z$.
\end{lemma}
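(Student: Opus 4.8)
The plan is to use the same toolkit as in the $p=2$ case, but adapted to the cyclic cubic tower. Suppose $E(\Q)[2^\infty] \subsetneq E(\Q[3])[2^\infty]$. First I would observe that $E(\Q[3])[2^\infty]$ is a finite $2$-group on which $G_\Q$ acts, and by Corollary \ref{torsiondestroyer} (applied with the prime $q=2$) the exponent is bounded: there is no point of order $16$ in $E(\Q[3])$, since $16$ would force a rational $8$-isogeny composed with the field-of-definition constraints — more precisely, Corollary \ref{torsiondestroyer} puts $2^n \in \{2,4,8,16,\dots\}$, so one first needs to rule out $16$ and $8$ as orders of points. For a point $P$ of order $8$ (resp.\ $16$), Corollary \ref{kojepolje} says $\Q(P) \subseteq \Q_{m,3}$ with $m = v_3(\phi(8)) = 0$ (resp.\ $v_3(\phi(16))=0$), so $P$ would already be defined over $\Q$; but then $E(\Q)[2^\infty]$ already contains that point, and growth must come from elsewhere. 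So the only way $2$-power torsion grows is that a point of order $2$ or $4$ that is \emph{not} rational becomes defined over $\Q[3]$.

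Next I would pin down which layer. If $P \in E(\Q[3])$ has order $2$ or $4$, then $\Q(P) \subseteq \Q_{m,3}$ with $m = v_3(\phi(2)) = 0$ or $m = v_3(\phi(4)) = 0$; in both cases $m=0$, forcing $\Q(P) = \Q$. This already shows $E(\Q[3])[2] = E(\Q)[2]$ and, combined with Proposition \ref{prop_pdiv} (or \cite[Proposition 4.7]{gn}) applied to $P$ of order $4$ with $2P$ rational, that $E(\Q[3])[4] = E(\Q)[4]$ \emph{unless} a point of order $4$ becomes divisible by $2$ in a cyclic cubic extension — but Proposition \ref{prop_pdiv} says $[\Q_{1,3}(P):\Q_{1,3}(2P)] \mid 3^2 = 9$ while it must be a power of $2$, hence equals $1$. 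Wait — the subtle point is that growth of $E(\Q[3])[2^\infty]$ beyond $E(\Q)[2^\infty]$ can only happen if $2$-torsion itself grows, and we have just shown the full $2$-torsion $E(\Q[3])[2]$ could be strictly larger than $E(\Q)[2]$ only if some order-$2$ point is defined over a nontrivial layer, which is impossible since $m=0$. The one genuinely available mechanism is: $E(\Q)[2] \simeq \Z/2\Z$ but $E(\Q[3])[2] \simeq \Z/2\Z \oplus \Z/2\Z$ — no, that too needs a new order-$2$ point over a nontrivial layer.

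The resolution, and the real content, is that the \emph{only} way the hypothesis $E(\Q)[2^\infty] \neq E(\Q[3])[2^\infty]$ can hold is through \emph{field-of-definition subtleties for the $2$-torsion when $E(\Q)[2] = \{O\}$}: the three nontrivial $2$-torsion points are permuted by $G_\Q$ through a subgroup of $S_3$, and the fixed field of the point-stabilizer can be cubic. So I would argue: $G_\Q$ acts on $E[2]$ via $\rho_{E,2} \colon G_\Q \to \mathrm{GL}_2(\F_2) \simeq S_3$. If the image is all of $S_3$, the cubic subfield cut out by a transposition is not abelian over $\Q$, hence not inside $\Q[3]$; if the image is $\Z/2\Z$, a $2$-torsion point lies in a quadratic field, not in the cubic $\Q[3]$; if the image is trivial, $E[2] \subseteq E(\Q)$ and nothing grows. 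The remaining case is image $\Z/3\Z \simeq A_3$: then $E[2]$ is defined over the cyclic cubic field fixed by $A_3$'s kernel, which \emph{can} be $\Q_{1,3}$. In that and only that case $E(\Q[3])[2] \simeq \Z/2\Z \oplus \Z/2\Z$ while $E(\Q)[2] = \{O\}$. I then need to check no further growth: a point of order $4$ over $\Q[3]$ still forces $m = v_3(\phi(4)) = 0$, i.e.\ rationality of its double and (via Proposition \ref{prop_pdiv}) no new $4$-divisibility, so $E(\Q[3])[4] = E(\Q)[4] = \{O\}$, and no point of order $4$ exists; higher powers are excluded as above. Hence $E(\Q[3])[2^\infty] \simeq \Z/2\Z \oplus \Z/2\Z$, as claimed. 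The main obstacle is the careful case analysis on the image of $\rho_{E,2}$ in $S_3$ and ruling out that a point of order $4$ (not merely order $2$) sneaks into the cubic tower, which is exactly where Corollary \ref{kojepolje} (giving $m=0$) together with Proposition \ref{prop_pdiv} does the work.
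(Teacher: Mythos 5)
Your proposal starts down a path that you yourself notice leads to nonsense, and the diagnosis of why is the crux of the matter. Corollary~\ref{kojepolje} is a consequence of the lemma immediately preceding it, which carries the essential hypothesis $E(\Q(P))[n]\simeq\Z/n\Z$; that hypothesis is what lets one write $P^\sigma = aP$. For an odd prime $q$ and $n$ a $q$-power this hypothesis is automatic over $\Q[p]$ ($p$ odd) by Corollary~\ref{cor-weil}, but for $n$ a power of $2$ it can and does fail: precisely when $E[2]$ becomes fully defined over $\Q(P)$. That is exactly the case relevant here, so your computations $m=v_3(\phi(2))=0$ and $m=v_3(\phi(4))=0$ do not imply $\Q(P)=\Q$, and the ``contradiction'' you run into is a signal that the corollary was applied outside its range of validity rather than a genuine constraint to be reconciled.

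Your mod-$2$ Galois image analysis in $\GL_2(\F_2)\simeq S_3$ is sound and correctly isolates the $\Z/3\Z$-image case as the only way $E(\Q[3])[2]$ can exceed $E(\Q)[2]$, with $E[2]$ then defined over $\Q_{1,3}$. This is in spirit the same fact the paper imports from \cite[Lemma~1]{najman2}. But the remaining step --- showing $E(\Q[3])[4]=\{O\}$ in this situation --- is where the real gap sits. Once $E[2]\subseteq E(\Q_{1,3})$, a point $P$ of order $4$ over $\Q[3]$ would have $2P$ defined over $\Q_{1,3}$, not over $\Q$; your phrase ``rationality of its double'' is simply wrong. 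To invoke Proposition~\ref{prop_pdiv} with $F=\Q$, $p=2$, $n=1$ you must first establish that $E(\Q(2P))=E(\Q_{1,3})$ has no point of order~$4$, which is precisely the content you are trying to prove and not something the proposition hands you for free. The paper closes this gap by citing \cite[Proposition~9]{najman}, which asserts exactly that $E(\Q_{1,3})[2]\neq 0$ forces $E(\Q_{1,3})[2^\infty]\simeq\Z/2\Z\oplus\Z/2\Z$ --- a nontrivial statement requiring its own argument (e.g.\ modular curve computations). Without that input, or an equivalent one, your proof does not rule out $\Z/2\Z\oplus\Z/4\Z$ or larger $2$-primary groups. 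In short: the $S_3$-image case analysis is a valid replacement for one of the paper's two citations, but the control of the $4$-torsion (and $8$-, $16$-, $32$-torsion, all still permitted by Corollary~\ref{torsiondestroyer}) is missing and cannot be recovered by the reasoning you offer.
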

\begin{proof}
By \cite[Lemma 1]{najman2}, if $E(\Q)[2]\neq \{O\}$, then $E(\Q)[2^\infty]=E(\Q[3])[2^\infty]$ and the $2$-power torsion can grow only $E(\Q)[2]=0$ and all the growth that occurs, occurs over the field obtained by adjoining a $2$-torsion point. By \cite[Proposition 9]{najman}, if $E(\Q[1][3])[2]\neq 0$ then $E(\Q[1][3])[2^\infty]\simeq \Z/2\Z \oplus \Z/2\Z$ and so $E(\Q[3])[2^\infty]\simeq \Z/2\Z \oplus \Z/2\Z$.
\end{proof}

\begin{lemma}
There are no points of order 18 in $E(\Q[3])$.
\end{lemma}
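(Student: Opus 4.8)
The plan is to suppose for contradiction that $E(\Q[3])_\tors$ contains a point $R$ of order $18$, and to run the same kind of "go up the tower and reduce to a quadratic field" argument used in the order $12$ and order $9$ lemmas. Since a point of order $18$ forces a point of order $2$ and a point of order $9$ in $E(\Q[3])$, the first step is to locate where these live. The $2$-torsion point must be defined over some quadratic subfield of $\Q[3]$, and the only quadratic subfield available is $\Q[1][3] = \Q(\zeta_9)^+$; but $\Q(\zeta_9)^+/\Q$ is totally real and cyclic of degree $3$ — it has \emph{no} quadratic subfield. Hence in fact the $2$-torsion point must already be defined over $\Q$, so $E(\Q)[2] \neq \{O\}$.

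Next I would pin down the $9$-torsion. By Corollary \ref{kojepolje}, a point $P$ of order $9$ in $E(\Q[3])$ is defined over $\Q[m][3]$ with $m = v_3(\phi(9)) = v_3(6) = 1$, so $P \in E(\Q[1][3])$. Combined with the previous paragraph, both a point of order $2$ and a point of order $9$ are defined over $\Q[1][3]$, so $E(\Q[1][3])$ contains a point of order $18$. Now I would invoke the classification of torsion of elliptic curves over $\Q$ base-changed to cubic fields (and in particular to $\Q[1][3]$): the point is that there is no elliptic curve $E/\Q$ with a point of order $18$ over a cubic field — this should follow from the results of \cite{najman} (the order $18$ case was already used this way in the order $9$ lemma over quadratic fields, and \cite[Theorem 1]{najman} together with \cite{kam,km} handles cubic fields), or alternatively from a direct computation that $X_1(18)(\Q[1][3]) = X_1(18)(\Q)$, since $X_1(18)$ has genus $0$ with only finitely many rational... actually $X_1(18)$ has genus $2$, so one computes the rank and torsion of its Jacobian over $\Q(\zeta_9)^+$ exactly as in the order $13$ and order $19$ lemmas and checks no non-cuspidal points appear.

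The main obstacle is the last step: verifying that $X_1(18)$ acquires no new rational points over the cubic field $\Q(\zeta_9)^+$. Unlike the quadratic case, I cannot simply cite Najman's classification of torsion over quadratic fields; I need either the analogous cubic-field input or an explicit Jacobian computation. The cleanest route is probably the explicit one: take the genus $2$ model of $X_1(18)$, compute $\rk J_{X_1(18)}(\Q(\zeta_9)^+)$ (expected to be $0$) and the torsion of $J_{X_1(18)}(\Q(\zeta_9)^+)$ in Magma, and then apply a Mordell–Weil sieve or direct point-search argument to conclude $X_1(18)(\Q(\zeta_9)^+)$ consists only of cusps. If the rank over $\Q(\zeta_9)^+$ turns out to be positive, one would instead argue via the isogeny constraints of Theorem \ref{thm-isog} (an elliptic curve with a point of order $18$ has an $18$-isogeny, hence no larger torsion can coexist) together with the structure of $G_{\Q}$ acting on the $9$-part, mirroring the argument in the order $12$ lemma; but I expect the rank-$0$ computation to go through, making this the quick finish.
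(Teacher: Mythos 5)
Your plan lands on the same computational step as the paper — show $X_1(18)(\Q(\zeta_9)^+)$ consists only of cusps by computing the rank and torsion of $J_1(18)$ over $\Q(\zeta_9)^+$ — but there are two problems on the way there.  First, your localization of the $2$-torsion point is wrong.  A point of order $2$ on $E/\Q$ is a root of the defining cubic, so its field of definition can have degree $1$, $2$, \emph{or} $3$ over $\Q$; inside $\Q[3]$ that degree must be a power of $3$, so the possibilities are $\Q$ or $\Q[1][3]$, and it is perfectly possible (and does happen, cf.\ the preceding lemma on $2$-power growth) that the $2$-torsion point is defined over $\Q[1][3]$ but not over $\Q$.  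Your inference ``$\Q[3]$ has no quadratic subfield, hence the $2$-torsion point is over $\Q$'' does not follow.  This detour is also unnecessary: the paper simply applies Corollary~\ref{kojepolje} to a point of order $18$ directly (since $v_3(\phi(18))=1$) to conclude it lies in $E(\Q[1][3])$, which is all one needs.

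Second, your first suggested finish — invoking ``the classification of torsion of rational elliptic curves over cubic fields'' via \cite[Theorem 1]{najman} and \cite{kam,km} — does not work.  That classification shows $\Z/18\Z$ \emph{does} occur as $E(K)_\tors$ for $E/\Q$ and $K$ cubic, so citing it cannot rule out $18$-torsion over the particular cubic field $\Q(\zeta_9)^+$.  The explicit genus-$2$ Jacobian computation is genuinely required, exactly as you suspect in your fallback, and that is what the paper does.  Where the paper spends real effort, and what your sketch glosses over, is in determining the torsion of $J_1(18)(\Q(\zeta_9)^+)$: it bounds it by reduction modulo small primes, exhibits the subgroup $\Z/7\Z\oplus\Z/21\Z$ generated by cusps, and then controls the possible extra $3$-torsion by comparing $J_1(18)$ over $\Q$, $\Q(\zeta_3)$, and $\Q(\zeta_9)$, using that $\Q(\zeta_9)=\Q(\zeta_3)\cdot\Q(\zeta_9)^+$ with linearly disjoint factors.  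That last bookkeeping is the part you would actually have to carry out.
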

\begin{proof}
Suppose that $E/\Q$ has a point of order 18 over $\Q[3]$. By Corollary \ref{kojepolje}, we see that the points of order 18 are defined over $\Q[1][3]=\Q(\zeta_9)^+$. We will prove that $X_1(18)(\Q(\zeta_9)^+)$ consists of only cusps. We compute that the rank of $J_1(18)(\Q(\zeta_9)^+)$ is $0$. By considering reduction modulo small primes of good reduction, we obtain that the torsion of $J_1(18)(\Q(\zeta_9)^+)$ is a subgroup of $\Z/21\Z^2$. We find 12 points in $X_1(18)(\Q(\zeta_9)^+)$, all of which are cusps, and the differences of pairs of these cusps generate a group isomorphic to $\Z/7\Z \oplus \Z/21\Z$. To prove that $J_1(18)(\Q(\zeta_9)^+)\simeq \Z/7\Z \oplus \Z/21\Z,$ we use the following argument. One finds that $$J_1(18)(\Q)_{\tors}\simeq \Z/21\Z,$$
$$J_1(18)(\Q(\zeta_3))_{\tors}\simeq \Z/3\Z \oplus \Z/21\Z,$$
and by considering reduction modulo small primes, that
$$J_1(18)(\Q(\zeta_3))\leq  \Z/21\Z \oplus \Z/21\Z.$$
Since $\Q(\zeta_9) = \Q(\zeta_3)\Q(\zeta_9)^+$ and $\Q(\zeta_3)\cap \Q(\zeta_9)^+=\Q$, we see that the field of definition of the non-rational elements of $J_1(18)(\Q(\zeta_9))[3]$ is $\Q(\zeta_3)$. Hence $J_1(18)(\Q(\zeta_9)^+)[3]\simeq \Z/3\Z$ and after checking that none of the points in  $J_1(18)(\Q(\zeta_9)^+)$ come from points in $X_1(18)(\Q(\zeta_9)^+)$ apart from the 12 known ones, we are done.

\end{proof}

\begin{proof}[Proof of Theorem \ref{p3result}]
It remains to determine when the $3$-power torsion grows. 

By Lemma \ref{ciklicko_prosirenje}, if $E(\Q[3])[3]\neq 0$, then $E(\Q)[3]\neq 0$.

Suppose $E(\Q[3])$ has a point $P$ of order 27; then $E$ has a rational $27$-isogeny over $\Q$, so $j(E)=-2^{15} \cdot 3 \cdot 53$. By Corollary \ref{kojepolje}, $P\in E(\Q[2][3])$. Let $E=27a2$; we have $j(E)=-2^{15} \cdot 3 \cdot 53$  We factor $\psi:=\psi_{27}/\psi_{9}$, where $\psi_n$ denotes the $n$-division polynomial $E$ - the polynomial which is the product of all the $x$-coordinates of the points of order 27 of $E$ and obtain that this polynomial has roots over $\Q[2][3]$. This implies that there exists a single quadratic twist (over $\Q[2][3]$) $E^\delta$ of $E$, for some $\delta \in L^*/(L^*)^2$ such that $E^\delta(\Q[2][3])$ has a point of order 27. It remains to check whether $E^\delta$ is defined over $\Q$, or equivalently, whether $\delta\cdot u^2=d$ for some $u\in L^*$ and some $d\in \Q^*/(\Q^*)^2$. We compute a $\delta$ and obtain that $N_{\Q[2][3]/\Q}(\delta)=3^{49}$, so the only twists that we can consider are $d=3$ and $-3$. We obtain that $E^{-3}$, which is 27a4 has a point of order 27 over $\Q[2][3]$. Note that from our argumentation it follows that 27a4 is the only elliptic curve with a point of order 27 over $\Q[3]$. For $E=27a4$, we have that $E(\Q[3])_\tors\simeq \Z/27\Z$, as any larger torsion would violate Theorem \ref{thm-isog}.

If $E(\Q)[3^\infty]\simeq \Z/9\Z$, then  we claim that $E(\Q[3])_\tors\simeq \Z/9\Z$. Indeed there cannot be any $q$-torsion for any $q\neq 2,3$ as this would force the existence of a $9q$-isogeny over $\Q$. It is impossible that $E(\Q[3])$ gains any $2$-torsion, as this would imply that there is $\Z/2\Z \oplus \Z/18\Z$ torsion over $\Q[1][3]$, which is impossible by \cite[Theorem 1]{najman}.

Finally from what we have already proved, when $E(\Q[3])[3^\infty]\simeq \Z/3\Z$ then $E(\Q[3])_\tors$ has to be either $\Z/3\Z$, $\Z/6\Z$, $\Z/12\Z$, $\Z/21\Z$ or $\Z/2\Z \oplus \Z/6\Z$.

This, together with the previous lemmas, completes the proof of the Theorem.
\end{proof}



\section{Examples of torsion growth}\label{sec:examples}

In this last section, we address the following question. Fix a prime $p$. Given a group $G$ that can appear as $E(\Q_{\infty,p})_\tors$ for some $E/\Q$, do there exist infinitely many $j$-invariants such that there exists an $E/\Q$ with such a $j$-invariant with $E(\Q_{\infty,p})_\tors \simeq G$ but $E(\Q)_\tors \not\simeq G$? By Theorem \ref{bigpresult} we need only consider the cases where $p = 2,3$.

\begin{tm}\label{thm:p2infinite}
Let $G$ be one of the following groups:
	\begin{align*}
	\Z/N\Z, &\qquad 3 \leq N \leq 10 \text{, or } N = 12,\\
	\Z/2\Z \oplus \Z/2N\Z, &\qquad 1 \leq N \leq 4,
	\end{align*}
There exist infinitely many elliptic curves $E/\Q$ with distinct $j$-invariants such that $E(\Q_{\infty,2})_\tors \simeq G$ and $E(\Q)_\tors \not\simeq G$.
\end{tm}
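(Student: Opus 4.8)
The plan is to produce, for each group $G$ in the list, an explicit one-parameter family of elliptic curves over $\Q(t)$ (or over $\Q$ parametrized by a rational point) realizing the prescribed torsion over $\Q_{\infty,2}$ while keeping the torsion over $\Q$ strictly smaller; since each such family has non-constant $j$-invariant, specializing $t$ avoids finitely many bad values and yields infinitely many distinct $j$-invariants. Concretely, I would work with the modular curves $X_1(M)$ and $X_1(2,2N)$ that parametrize the relevant torsion structures, and exploit the fact that $\Q_{\infty,2}$ contains the quadratic subfields $\Q(\sqrt{2})$, $\Q\bigl(\sqrt{2+\sqrt 2}\bigr)$, etc., so that "torsion defined over $\Q_{\infty,2}$" can in practice be reduced to "torsion defined over a specific small $2$-power layer $\Q_{n,2}$" via Corollary \ref{kojepolje}.

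The key steps, in order: (i) For $G = \Z/N\Z$ with $N\in\{3,\dots,10,12\}$ coming from Mazur's list, start from the universal elliptic curve with a point of order $N$ over $\Q(t)$ (Kubert's parametrizations), and take a suitable quadratic twist by an element of $\Q_{1,2}=\Q(\sqrt2)$ — e.g.\ twisting by $2$ — so that the full cyclic group of order $N$ only becomes rational after base change to $\Q(\sqrt2)\subseteq\Q_{\infty,2}$, while over $\Q$ the curve has torsion a proper subgroup (one checks the generic twist kills the point of order dividing $N$ but not the $2$-part, or vice versa, depending on parity of $N$). (ii) For $G=\Z/2\Z\oplus\Z/2N\Z$ with $N\in\{1,2,3,4\}$, start from the universal curve over $X_1(2,2N)$ and again twist by a generator of a quadratic layer so that the "extra" $2$-torsion point is defined only over $\Q(\sqrt2)$ (or over $\Q(\sqrt{2+\sqrt2})$ for the rank-$2$ part of the $2^\infty$-torsion, using the Fujita classification \cite{fuj} to know exactly which $2$-structures are attainable). (iii) In each case verify $E(\Q_{\infty,2})_\tors\simeq G$ exactly (not larger) by the same isogeny-bound and division-polynomial arguments used in the proof of Theorem \ref{p2result}, and verify $E(\Q)_\tors\not\simeq G$ by exhibiting a Galois element moving the relevant point. (iv) Conclude that the $j$-invariant of the family is non-constant, so infinitely many specializations give distinct $j$-invariants, and only finitely many specializations can fail (bad reduction of the torsion structure, or accidental extra torsion/descent of torsion to $\Q$), which is a thin set.

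The main obstacle I anticipate is step (iii): ruling out that a generic member of the twisted family has torsion strictly larger than $G$ over $\Q_{\infty,2}$, and simultaneously that it does not already have torsion $\simeq G$ over $\Q$ for a positive-density set of specializations. The first is handled uniformly by Corollary \ref{torsiondestroyer}, Theorem \ref{thm-isog} and Fujita's bound $E(\Q_{\infty,2})[2^\infty]\subseteq\Z/2\Z\oplus\Z/8\Z$, which cap the possible torsion; the second requires showing the twisting parameter is chosen so that the relevant field of definition is genuinely $\Q(\sqrt2)$ and not $\Q$ — i.e.\ that a certain value in $\Q(t)^\times/(\Q(t)^\times)^2$ is non-trivial and remains non-square after all but finitely many specializations, which follows from Hilbert irreducibility. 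A secondary bookkeeping obstacle is the group $G=\Z/2\Z\oplus\Z/8\Z$ at the top of the $2$-power list, where one must realize the full $2$-adic growth using the deeper layer $\Q_{2,2}=\Q(\sqrt{2+\sqrt2})$; here I would start from a curve with $\Q$-rational $\Z/8\Z$ and a $2$-torsion point that becomes rational over $\Q_{2,2}$, which is exactly the situation analyzed in the $2$-power torsion lemmas above.
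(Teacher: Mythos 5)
The core idea of your proposal — twisting by $2$ so that torsion becomes rational over $\Q(\sqrt2)\subseteq\Q_{\infty,2}$ — is exactly the mechanism the paper uses, and your invocation of Hilbert irreducibility to avoid additional isogenies generically is also the paper's move. However, your step (ii) has a genuine gap: quadratic twisting does not change the field of definition of the $2$-torsion subgroup (a twist $E^d$ and $E$ have the same $\Gal$-module $E[2]$), so you cannot start from the universal curve on $X_1(2,2N)$ (which already has full rational $2$-torsion) and ``twist so the extra $2$-torsion point is defined only over $\Q(\sqrt2)$.'' That plan produces no growth at all in the $2$-torsion.

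What the paper does instead for $G=\Z/2\Z\oplus\Z/2N\Z$ with $N\ge 2$ is to start from curves $E$ with $E(\Q)_\tors\simeq\Z/2N\Z$ (not full $2$-torsion), take $E^2$, and then observe that $E^2(\Q)_\tors\simeq\Z/2\Z$ while $E^2(\Q_{\infty,2})_\tors$ equals either $\Z/2N\Z$ or $\Z/2\Z\oplus\Z/2N\Z$ depending on whether the discriminant of $E$ is $\equiv -1$ or $\equiv 2$ modulo squares; one then checks that both discriminant classes occur infinitely often in the relevant Kubert family. For $G=\Z/2\Z\oplus\Z/2\Z$ twisting cannot help at all, and the paper handles it by a separate explicit non-isotrivial family $E_t$ with $E_t(\Q)_\tors\simeq\Z/2\Z$ and $\Q(E_t[2])=\Q(\sqrt2)$. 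You should also note that your worry about $\Z/2\Z\oplus\Z/8\Z$ requiring the second layer $\Q_{2,2}=\Q(\sqrt{2+\sqrt2})$ is unnecessary: the entire construction lives in the first layer $\Q(\sqrt2)$, since the growth of the $\Z/2\Z$-direction comes from the discriminant and the growth of the $\Z/2N\Z$-direction comes from the twist. Finally, a minor but real point: for odd $N$ the paper uses the Weil pairing to argue $E^2(\Q)[N]=\{\mathcal O\}$, so the twist has \emph{trivial} torsion over $\Q$, a slightly stronger (and cleaner) conclusion than the ``proper subgroup'' you claim.
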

\begin{proof}
We break this down into cases depending on whether $E(\Q)[2]$ needs to be trivial or not. 

Suppose that $G = \Z/N\Z$ for some odd integer $N$. Then there exist infinitely many elliptic curves $E/\Q$ with distinct $j$-invariants such that $E(\Q)_\tors \simeq \Z/N\Z$ and $E$ has no additional isogenies over $\Q$. This is true because for each $N$ in the statement the elliptic curves $E/\Q$ with $E(\Q)_\tors \simeq \Z/N\Z$ come in a non-isotrivial 1-parameter family that generically doesn't have any additional isogenies. So by Hilbert irreducibility, outside of a ``thin'' set every curve in the family also has no additional isogenies. For more details about Hilbert irreducibility and thin sets, see \cite[Chapter 9]{serre}. Thus, for each of these $E$ the quadratic twist $E^2$ of $E$ by 2 will have trivial torsion over $\Q$, because for odd $N$ we have that $E(\Q(\sqrt{2}))[N] \simeq E(\Q)[N] \oplus E^2(\Q)[N]$ and since $\Q(\sqrt{2})$ does not contain any $m^{th}$-roots of unity for any $2 < m \mid N$ the existence of the Weil-pairing gives that $E^2(\Q)[N] = \{ \mathcal{O} \}$. Further, since $E$ and $E^2$ become isomorphic over $\Q(\sqrt{2}) \subseteq \Q_{\infty,2}$, it follows that $E^2(\Q_{\infty,2})_\tors\simeq \Z/N\Z$. Notice that the torsion can't grow any further since $E$ and hence $E^2$ don't have any additional isogenies. 

Next, suppose that $G = \Z/2n\Z$ with $n\geq 2$. Again there are infinitely many elliptic curves $E/\Q$ such that $E(\Q)_\tors \simeq \Z/2n\Z$ and $E$ has no additional isogenies over $\Q$. Now we have $E^2(\Q)_\tors\simeq \Z/2\Z$ and $E^2(\Q_{2,\infty})_\tors\simeq \Z/2n\Z$ or $\Z/2\Z\oplus\Z/2n\Z$. Again, the torsion can't grow any further since $E$ and hence $E^2$ don't have any additional isogenies. If $n > 4$, we can't have that $E^2(\Q_{2,\infty})_\tors \simeq \Z/2\Z\oplus\Z/2n\Z$ by Theorem \ref{p2result}, while when $n =2,3,4$ both cases are possible depending on the class of the discriminant of $E$ modulo squares. Checking the generic elliptic curves with a rational torsion subgroup isomorphic to $\Z/2n\Z$ for $n =2,3,4$ we see that there are infinitely many curves whose discriminant are congruent to 2 mod squares and infinitely many curves whose discriminants are congruent to $-1$ mod squares. In the first case we have that $E^2(\Q_{2,\infty})_\tors \simeq \Z/2\Z\oplus\Z/2n\Z$, while in the second case $E^2(\Q_{2,\infty})_\tors \simeq \Z/2n\Z$. So all that remains is to check the case when $G = \Z/2\Z \oplus \Z/2\Z$. 

To finish the last case we give a non-isotrivial family $E_t$ with $E_t(\Q)_\tors \simeq \Z/2\Z$ and $\Q(E_t[2]) = \Q(\sqrt{2})$. This family is 
\[
E_t: y^2 = x^3 - \frac{2}{t^2 - 1/2}x^2 - \frac{2}{t^2 - 1/2}x
\]
and generically these curves have no other isogenies and so for infinitely many of them $E_t(\Q_{2,\infty})_\tors \simeq \Z/2\Z\oplus\Z/2\Z$.

\end{proof}

We list in Table \ref{Q2examples} examples of elliptic curves with minimal discriminant achieving growth to each possible torsion group over $\Q_{\infty,2}$.

\begin{remark}
Clearly it is impossible for an elliptic curve to have its torsion ``grow'' and become trivial so in Theorem \ref{thm:p2infinite} $G$ cannot be the trivial group and since an elliptic curve can only go from having trivial 2-torsion to having a point of order 2 in an extension degree divisible by 3, it cannot be that torsion grows to $\Z/2\Z$ over $\Q_{\infty,2}$.
\end{remark}

Now we consider the cases of torsion growth over $\Q_{\infty,3}$. By the results in Section \ref{proofofp3result} we need to consider the cases of torsion growth listed on Table \ref{Q3examples} (we offer an example of minimal conductor for each type of growth in question). We prove that besides $\Z/3\Z$ to $\Z/21\Z$ and $\Z/3\Z$ to $\Z/27\Z$ (which are easily explained by $X_0(21)$ and $X_0(27)$ having finitely many rational points), all of these cases occur for infinitely many $j$-invariants.

First we have a theorem that gives the conductor of a cyclic cubic number field in terms of its defining polynomial.

\begin{tm}[\cite{cubiccond}] \label{cubicconductor}
Let $K$ be a number field with $[K: \Q]=3$ and $\Gal(K/\Q) \simeq \Z/3\Z$. Then $K = \Q(\theta)$ for a $\theta$ satisfying $\theta^3+A\theta+B=0$ with $A,B \in \mathbb{Z}$ and, for any $R\in \mathbb{Z}$, if $R^2 \mid A$ and $R^3 \mid B$, then $|R|=1$. Further, the conductor $\mathfrak{f}(K)$ is given by
\[
\mathfrak{f}(K) = 3^{\alpha} \prod\limits_{\substack{p \text{(prime)} \equiv 1 \mod 3 \\ p \mid (A,B)}} p
\]
where, letting $C$ be the square root of the discriminant of $K$,
\[
\alpha = \begin{cases}
0 & \text{ if } 3 \nmid A \text{ or } 3 \mid\mid A,\;\; 3 \nmid B,\;\; 3^3 \mid C \\
2 & \text{ if } 3^2 \mid\mid A,\;\; 3^2 \mid\mid B \text{ or } 3 \mid\mid A,\;\; 3 \nmid B,\;\; 3^2 \mid\mid C.
\end{cases}
\]
\end{tm}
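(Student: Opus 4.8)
The plan is to reduce the computation of the conductor to the local conductors at the ramified primes, extract the primes different from $3$ directly from $A$ and $B$, and then analyze the prime $3$ by a hands-on local argument.

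First I would fix the shape of the defining polynomial. Starting from any algebraic integer $\theta$ generating $K$, with monic minimal polynomial $x^{3}+a_{2}x^{2}+a_{1}x+a_{0}$, the element $3\theta+a_{2}$ again generates $K$ and is an algebraic integer with depressed minimal polynomial $x^{3}+Ax+B$, where $A=9a_{1}-3a_{2}^{2}$ and $B=2a_{2}^{3}-9a_{1}a_{2}+27a_{0}$. If some integer $R>1$ satisfies $R^{2}\mid A$ and $R^{3}\mid B$, then $\theta/R$ generates the same field, is an algebraic integer, and has strictly smaller $|A|$; iterating terminates and yields $A,B$ with the stated normalization.

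Next I would localize. Since $K/\Q$ is abelian, $\mathfrak{f}(K)=\prod_{p}p^{c_{p}}$, where $c_{p}$ is the conductor exponent at $p$ of a nontrivial character of $\Gal(K/\Q)$ (the two nontrivial characters are complex conjugate, hence of equal conductor), and the conductor--discriminant formula gives $d_{K}=\mathfrak{f}(K)^{2}$. So everything reduces to computing the $c_{p}$, i.e.\ to deciding which primes ramify and how wildly. For $p\neq 3$ the ramification is tame, so $c_{p}\in\{0,1\}$ and the ramification index is $1$ or $3$; and when $p\not\equiv 1\pmod 3$ the group $\mathbb{Q}_{p}^{\times}/(\mathbb{Q}_{p}^{\times})^{3}$ is cyclic of order $3$, its unique index-$3$ subgroup being the norm group $(\mathbb{Q}_{p}^{\times})^{3}$ of the unramified cubic extension, so such $p$ never ramify in a cyclic cubic field. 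For $p\equiv 1\pmod 3$, the Newton polygon of $x^{3}+Ax+B$ at $p$ (equivalently Dedekind's criterion), combined with the normalization, shows that $p$ ramifies exactly when $x^{3}+Ax+B\equiv x^{3}\pmod p$, i.e.\ when $p\mid\gcd(A,B)$, and that it is then totally tamely ramified with $c_{p}=1$. Thus the prime-to-$3$ part of $\mathfrak{f}(K)$ is the product of the primes $p\equiv 1\pmod 3$ dividing $\gcd(A,B)$, and this product is squarefree.

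Finally I would analyze $p=3$, which I expect to be the main obstacle. The exponent $c_{3}$ depends only on how $3$ splits in $K$, and for a cyclic cubic field the possibilities are split, inert, or totally (hence wildly) ramified; in the first two cases $c_{3}=0$, while in the last $c_{3}=2$, since the conductor of an order-$3$ character of $\mathbb{Q}_{3}^{\times}$ nontrivial on the units equals $3^{2}$. Hence $c_{3}=\alpha\in\{0,2\}$. To see that only the three listed configurations occur, note that $-4A^{3}-27B^{2}$ is a perfect square (as $\Gal(K/\Q)\simeq\Z/3\Z$), so $v_{3}(-4A^{3}-27B^{2})$ is even; running through the cases for $(v_{3}(A),v_{3}(B))$ and discarding those forbidden by the normalization leaves precisely $v_{3}(A)=0$, $(v_{3}(A),v_{3}(B))=(1,0)$, and $(v_{3}(A),v_{3}(B))=(2,2)$. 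If $v_{3}(A)=0$ then $v_{3}(-4A^{3}-27B^{2})=0$, so $3\nmid d_{K}$ and $\alpha=0$; if $(v_{3}(A),v_{3}(B))=(2,2)$ the Newton polygon of $x^{3}+Ax+B$ at $3$ is a single segment of slope $-2/3$, so $3$ is totally wildly ramified and $\alpha=2$. The delicate case is $(v_{3}(A),v_{3}(B))=(1,0)$: here $x^{3}+Ax+B\equiv(x+B)^{3}\pmod 3$, so the reduction no longer decides whether $3$ ramifies, and one must study $\mathbb{Q}_{3}(\theta)/\mathbb{Q}_{3}$ through its ramification filtration; the outcome turns out to be recorded by the finer $3$-adic invariant $v_{3}(C)$, yielding the stated dichotomy ($3$ ramifies, hence $\alpha=2$, exactly when $v_{3}(C)=2$). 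Assembling the local exponents then produces the displayed formula. The hard part is this last case: the rest is essentially bookkeeping, whereas separating ramified from unramified behaviour at $3$ when $v_{3}(A)=1$ and $v_{3}(B)=0$ requires genuinely controlling the wild ramification there.
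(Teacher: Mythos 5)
The paper does not prove this theorem; it is quoted from Huard--Spearman--Williams \cite{cubiccond}, so there is no in-paper proof to compare against. Judged on its own merits, your reduction is the right one: localize via the conductor--discriminant formula, note that a prime $p\neq 3$ can ramify only tamely (so $c_p\in\{0,1\}$) and only when $p\equiv 1\pmod 3$, and handle $p=3$ separately with $c_3\in\{0,2\}$. Your enumeration of the admissible pairs $(v_3(A),v_3(B))$ using the normalization together with the fact that $-4A^3-27B^2$ is a perfect square is also correct.

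Two steps, however, are asserted rather than proved. For $p\equiv 1\pmod 3$ you claim the Newton polygon (or Dedekind) ``shows that $p$ ramifies exactly when $p\mid\gcd(A,B)$.'' The forward direction is a Newton-polygon argument once one also uses the evenness of $v_p(\mathrm{disc})$ to rule out $(v_p(A),v_p(B))=(1,2)$. But the converse does not follow from the Newton polygon of $f$, and Dedekind only tells you when $p$ divides the index, not how $(p)$ then factors: if $p\nmid\gcd(A,B)$ while $p\mid C$, the reduction of $f$ mod $p$ has a double root and $p$ necessarily divides the index, so the reduction alone is silent about ramification. You need an extra idea here --- for instance, that $\theta$ has trace zero, so at a tamely totally ramified $p\equiv 1\pmod 3$, where $\{1,\varpi,\varpi^2\}$ is a local integral basis and $\mathrm{Tr}(\varpi)=\mathrm{Tr}(\varpi^2)=0$ while $\mathrm{Tr}(1)=3\in\Z_p^{\times}$, one gets $v_{\mathfrak p}(\theta)\geq 1$ and hence $p\mid A$ and $p\mid B$. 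Second, the decisive case at $3$, namely $(v_3(A),v_3(B))=(1,0)$ (where $\bar f$ is a perfect cube mod $3$), is left as something that ``turns out'' to be governed by $v_3(C)$. You flag this honestly, but it is the main content of the theorem at $p=3$, and nothing short of a genuine wild-ramification computation (identifying $\mathbb{Q}_3(\theta)$ among the cyclic cubic extensions of $\mathbb{Q}_3$, or bounding $v_3$ of the index of $\mathbb{Z}_3[\theta]$) will settle it; as written you have isolated this case correctly but not closed it.
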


The following lemma gives a way to construct an elliptic curve with torsion growth $\{\mathcal{O}\}$ over $\Q$ to $\Z/2\Z \oplus \Z/2\Z$ over $\Q_{\infty,3}$ and also an elliptic curve with torsion growth $\Z/3\Z$ over $\Q$ to $\Z/2\Z \oplus \Z/6\Z$ over $\Q_{\infty,3}$.

\begin{lemma}\label{z3toz2xz6criteria}
Let 
\[j_3(h) = \frac{(h+27)(h+3)^3}{h}.\]
Suppose that we have $u,v \in \Z$ with $(u,v)=1$ and $u^2+27v^2 = 4 \cdot 3^k \cdot p^3$ for some $k = 2,3$ and some $p \equiv 1 \pmod 3$. Then there is an elliptic curve $E/\Q$ with $j$-invariant $j_3(\frac{u^2}{v^2})$ such that
\[
E(\Q)_{\text{tors}} = \{ \mathcal{O}\} \;\;\text{ and }\;\; E(\Q_{\infty,3})_{\text{tors}} \simeq \Z/2\Z \oplus \Z/2\Z.
\]
Moreover there is a quadratic twist $E'$ of $E$ such that 
\[E'(\Q)_{\text{tors}} \simeq \Z/3\Z \;\;\text{ and }\;\; E'(\Q_{\infty,3})_{\text{tors}} \simeq \Z/2\Z \oplus \Z/6\Z.\]
\end{lemma}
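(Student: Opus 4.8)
\emph{Overall plan.} I would produce a single curve $\mathcal E/\Q$ with $j(\mathcal E)=j_3(u^2/v^2)$ whose full $2$-torsion becomes rational over the cyclic cubic field $\Q(\zeta_9)^+=\Q[1][3]$, and then take $E$ and $E'$ to be suitable quadratic twists of $\mathcal E$. Writing $h=u^2/v^2$, I would choose for $\mathcal E$ a curve with $c_4=(h+27)(h+3)$ and $c_6=-(h+27)(h^2+18h-27)$, so that $\Delta_{\mathcal E}=(h+27)^2h$ and $j(\mathcal E)=j_3(h)$; this $\mathcal E$ admits a $\Q$-rational $3$-isogeny (as $j_3(h)$ with $h\in\Q$ gives a $\Q$-point of $X_0(3)$, and a short descent shows the isogeny of the model above is defined over $\Q$), and $G_\Q$ acts on its kernel through a quadratic character $\chi$. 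Its $2$-division polynomial is, up to a rational scaling, $x^3-27c_4x-54c_6$, whose discriminant equals $\Delta_{\mathcal E}$ up to a rational square, hence equals $h$ up to a rational square; since $h$ is a perfect square, $\Q(\mathcal E[2])$ is either $\Q$ or a cyclic cubic field, and, being fixed under quadratic twisting and a function of $j$ only, it depends only on $h$.

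\emph{The core step: identifying $\Q(\mathcal E[2])$.} I would clear denominators and divide out the largest admissible cube to put the $2$-division polynomial into the normalized form $\theta^3+A\theta+B$ of Theorem \ref{cubicconductor}, compute $A$ and $B$ as integer polynomials in $u,v$, and apply that theorem. The hypothesis $u^2+27v^2=4\cdot 3^k p^3$ with $p\equiv 1\pmod 3$ and $k\in\{2,3\}$ is tailored for exactly this: the cube $p^3$ divides out of $(A,B)$, and the congruence $u^2\equiv -27v^2\pmod p$ forces $p\nmid B$ after reduction, so no prime $\equiv 1\pmod 3$ remains in the conductor; and the $3$-adic structure of $u^2+27v^2$ (which forces $3\mid u$ and $3\nmid v$, with $k=2$ iff $3\mid\mid u$ and $k=3$ iff $9\mid u$, while $k=1$ cannot occur) pins the $3$-adic valuations of the reduced $A,B$, so that — $\Q(\mathcal E[2])$ being cyclic, whence its conductor at $3$ is $3^0$ or $3^2$ and never $3^1$ — Theorem \ref{cubicconductor} places us in its case $\alpha=2$. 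Thus $\Q(\mathcal E[2])$ is the unique cyclic cubic field of conductor $9$, namely $\Q(\zeta_9)^+=\Q[1][3]\subseteq\Q[3]$; in particular the cubic is irreducible, so no quadratic twist of $\mathcal E$ has a rational point of order $2$.

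\emph{Passing to twists.} Let $E'$ be the twist of $\mathcal E$ by $\chi$; then $E'$ has a rational point of order $3$. Since at most four quadratic twists of $\mathcal E$ can have a rational point of order $3$ (one for each Galois-stable line in $\mathcal E[3]$), I may take $E$ to be any other quadratic twist of $E'$, so $E$ has no rational point of order $3$. All these twists carry the rational $3$-isogeny of $\mathcal E$ and have $2$-division field $\Q(\zeta_9)^+$; comparing with the finitely many elliptic curves over $\Q$ admitting a rational $15$-, $21$- or $27$-isogeny (none of which has this $2$-division field), neither $E$ nor $E'$ has a rational point of order $5$, $7$ or $27$, and therefore (the requisite smaller torsion being absent, and $\Z/11\Z,\Z/13\Z$ excluded by Mazur's theorem) $E(\Q)_{\tors}=\{\mathcal O\}$, while $E'(\Q)_{\tors}$ is $\Z/3\Z$ or $\Z/9\Z$.

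\emph{Torsion over $\Q[3]$.} Since $\mathcal E[2]\subseteq E(\Q[1][3])$ and $E(\Q)[2^\infty]=\{\mathcal O\}$, the lemma of Section \ref{proofofp3result} controlling the growth of $2$-power torsion gives $E(\Q[3])[2^\infty]\simeq\Z/2\Z\oplus\Z/2\Z$ (so $E$ gains no point of order $4$), and likewise $E'(\Q[3])[2^\infty]\simeq\Z/2\Z\oplus\Z/2\Z$. By Theorem \ref{p3result} the groups $E(\Q[3])_{\tors}$ and $E'(\Q[3])_{\tors}$ lie in the list given there; $E(\Q[3])_{\tors}$ contains $\Z/2\Z\oplus\Z/2\Z$ and, by Corollary \ref{kojepolje} (as $v_3(\phi(3))=0$), gains no point of order $3$, forcing $E(\Q[3])_{\tors}\simeq\Z/2\Z\oplus\Z/2\Z$; and $E'(\Q[3])_{\tors}$ contains $\Z/2\Z\oplus\Z/6\Z$, which is the only group in the list containing it (a point of order $9$ would yield the forbidden $\Z/2\Z\oplus\Z/18\Z$), so $E'(\Q[3])_{\tors}\simeq\Z/2\Z\oplus\Z/6\Z$ and in particular $E'(\Q)_{\tors}\simeq\Z/3\Z$. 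This proves the lemma. I expect the conductor computation in the second step — normalizing the $2$-division polynomial and extracting from the hypothesis that the conductor is exactly $9$, in particular that $\alpha=2$ rather than $\alpha=0$ — to be the main obstacle, with the verification that $\mathcal E$ admits no unexpected extra rational isogeny a secondary point.
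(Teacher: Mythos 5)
Your proposal follows the same route as the paper: view a curve with $j=j_3(h)$ and square discriminant as coming from $X_0(3)$, normalize the $2$-division polynomial to $x^3+A(h)x+B(h)$ with $A(h)=-27(h+3)(h+27)$ and $B(h)=54(h+27)(h^2+18h-27)$, homogenize with $h=u^2/v^2$, note $(A(u,v),B(u,v))=2^a3^b(u^2+27v^2)$, and invoke Theorem \ref{cubicconductor} with the hypothesis $u^2+27v^2=4\cdot 3^k p^3$ to force $\Q(\mathcal E[2])=\Q(\zeta_9)^+$, then pass to the quadratic twist carrying the rational $3$-torsion point. Where you go beyond the paper is in actually pinning down the torsion groups (ruling out growth at $5$, $7$, $9$, $27$ and identifying $\Z/2\Z\oplus\Z/2\Z$ versus $\Z/2\Z\oplus\Z/6\Z$ over $\Q[3]$), which the paper asserts but does not verify, and in trying to show $\alpha=2$ explicitly so that the cubic is irreducible. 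Two small caveats: the claim that the $3$-adic valuations place you squarely in the $\alpha=2$ case of Theorem \ref{cubicconductor} is correct for $k=2$ (one finds $3^2\mathbin{\|}A'$, $3^2\mathbin{\|}B'$ after removing cubes) but for $k=3$ the reduced valuations give $3\mathbin{\|}A'$, $3\nmid B'$, which is the case where $\alpha$ depends on $v_3(C)$ and requires an extra computation you have not supplied; and a fixed elliptic curve over $\Q$ has at most two (not four) quadratic twists with a rational $3$-torsion point, corresponding to the at most two $G_\Q$-stable lines in $E[3]$, though this slip does not affect your argument.
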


\begin{proof}
Let $E/\Q$ be an elliptic curve with $E(\Q)_{\text{tors}} \simeq \Z/3\Z$ and $E(\Q_{\infty,3})_\tors \simeq \Z/2\Z \oplus \Z/6\Z$. Then $E$ has a rational 3-isogeny and square discriminant, so $E$ corresponds to a rational point on $X_0(3)$, and so
\[
j(E) = \frac{(h+27)(h+3)^3}{h},
\]
for some $h \in \Q$. A model for an elliptic curve with such a $j$-invariant is given by
\[
E_h : y^2 = f(x) = x^3 + \frac{-27(h+3)^3(h+27)}{(h^2+18h-27)^2} x + \frac{54(h+3)^3(h+27)}{(h^2+18h-27)^2},
\]
and by computing the discriminant of this model we can see that $E_h$ has square discriminant if and only if $h \in (\Q^{*})^2$. If we choose an $h$ such that the discriminant of $E_h$ is a square, then $\Gal(\Q(E_h[2])/\Q) \simeq \Z/3\Z$ and $\Q(E_h[2]) = \Q(f)$.

Now we examine $\Q(f)$ to determine when $\Q(f) = \Q_{1,3}$. This will occur precisely when the conductor of $\Q(f)$ is divisible only by 3. Through a change of variables we see that $\Q(f) = \Q(x^3 + A(h) x + B(h))$ where
\[A(h) = -27(h+3)(h+27) \text{ and } B(h)=54(h+27)(h^2+18h-27).\]
We homogenize the equations by letting $h = \frac{u^2}{v^2}$  be written in lowers terms, so that all parameters are integers to obtain
\[A(u,v) = -27(u^2+3v^2)(u^2+27v^2) \text{ and } B(u,v) = 54(u^2+27v^2)(u^4+18u^2v^2-27v^4).\]
By Theorem \ref{cubicconductor} the conductor will be a power of 3 when the gcd of $A(u,v)$ and $B(u,v)$ is divisible only by 3, primes $p \equiv 2 \mod 3$, and by cubes, since by a change of variables we can remove cubes from the gcd of $A(u,v)$ and $B(u,v)$.

We can see that $(A(u,v),B(u,v)) = 2^a \cdot 3^b \cdot (u^2+27v^2)$ for some $a,b \in \Z^{\geq 0}$, since if a prime divides both $u^2+3v^2$ and $u^4+18u^2v^2-27v^4$ then it must be 2 or 3. Thus, if we choose $u$ and $v$ as in the statement of the lemma, we see that $\Q(E_h[2])$ will have conductor a power of 3, and thus $\Q(E_h[2]) = \Q_{1,3}$.

Finally, by construction, $E_h$ is defined over $\Q$ and has a rational $3$-isogeny. Thus, there is a quadratic twist of $E_h$ that has a 3-torsion point over $\Q$. Note that taking a quadratic twist does not change the field of definition of the 2-torsion points, so this twist indeed has the growth we are looking for over $\Q_{1,3}$.

\end{proof}

Now, we have a lemma to ensure there are infinitely many non-isomorphic $E/\Q$ with the above torsion growth.

\begin{lemma}\label{z3toz2xz6infinite}
For any prime $p \equiv 1 \pmod 3$ and $k=2,3$ there exist $u,v \in \Z$ with $(u,v)=1$ such that $u^2+27v^2 = 4 \cdot 3^k \cdot p^3$.
\end{lemma}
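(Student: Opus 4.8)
The plan is to read the equation $u^2+27v^2 = 4\cdot 3^k p^3$ as the assertion that $4\cdot 3^k p^3$ is \emph{primitively} represented by the principal binary quadratic form $Q_0(x,y)=x^2+27y^2$ of discriminant $-108$, and to build such a representation by composition. First I would record that the form class group $C(-108)$ is cyclic of order $3$: enumerating the reduced primitive forms of discriminant $-108$ leaves exactly $x^2+27y^2$ and $4x^2\pm 2xy+7y^2$, so $C(-108)\simeq \Z/3\Z$ with $Q_0$ in the principal class. I would also note that $4\cdot 3^k$ is itself primitively represented by $Q_0$ for the two relevant exponents, namely $36=3^2+27\cdot 1^2$ and $108=9^2+27\cdot 1^2$, with coprime coordinates in each case.

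The heart of the argument is that $p^3$ is primitively represented by $Q_0$. Since $p\equiv 1\pmod 3$ we have $\legendre{-108}{p}=\legendre{-3}{p}=1$ and $p\nmid 108$, so $p$ is primitively represented by some primitive form of discriminant $-108$; equivalently there is an invertible ideal $\mathfrak p$ of the order of discriminant $-108$ with $N\mathfrak p=p$ and $\mathfrak p\neq\overline{\mathfrak p}$. Since $p$ is coprime to the conductor, invertible ideals coprime to the conductor factor uniquely, so $\mathfrak p^3$ is again primitive (only $\mathfrak p$, never $\overline{\mathfrak p}$, occurs in it; were a rational prime $q$ to divide $\mathfrak p^3$ we would get $q^2\mid p^3$, hence $q=p$ and $\overline{\mathfrak p}\mid\mathfrak p^3$, impossible), has norm $p^3$, and lies in the class $[\mathfrak p]^3$, which is trivial because $|C(-108)|=3$. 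Hence $\mathfrak p^3$ is principal and there are coprime integers $a,b$ with $a^2+27b^2=p^3$. It is worth emphasising that it is genuinely $p^3$, not $p$, that is needed here: a positive proportion of the primes $p\equiv 1\pmod 3$ are not of the form $x^2+27y^2$, and it is precisely $|C(-108)|=3$ that rescues the cube.

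To finish I would compose the two representations via the norm identity in $\Q(\sqrt{-3})$,
\[
(x_1^2+27y_1^2)(x_2^2+27y_2^2)=(x_1x_2-27y_1y_2)^2+27(x_1y_2+x_2y_1)^2,
\]
taking $(x_1,y_1)=(3,1)$ when $k=2$ and $(x_1,y_1)=(9,1)$ when $k=3$, and $(x_2,y_2)=(a,b)$. Then $u:=x_1x_2-27y_1y_2$ and $v:=x_1y_2+x_2y_1$ satisfy $u^2+27v^2=4\cdot 3^k p^3$ immediately, and it remains to check $\gcd(u,v)=1$. Since $p\neq 2,3$ we have $\gcd(4\cdot 3^k,p^3)=1$; if a prime $\ell$ divided both $u$ and $v$, then $\ell^2\mid u^2+27v^2=(4\cdot3^k)\cdot p^3$, so $\ell$ divides exactly one of the two factors, while the identities $x_2u+27y_2v=x_1(x_2^2+27y_2^2)$, $x_2v-y_2u=y_1(x_2^2+27y_2^2)$ and their swapped counterparts $x_1u+27y_1v=x_2(x_1^2+27y_1^2)$, $x_1v-y_1u=y_2(x_1^2+27y_1^2)$ force $\ell$ to divide $\gcd(x_1,y_1)=1$ or $\gcd(x_2,y_2)=1$, a contradiction. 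The only step requiring real care is the class-group input producing the coprime representation of $p^3$; the rest is bookkeeping.
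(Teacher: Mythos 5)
Your proof is correct, but it follows a genuinely different (and arguably cleaner) route than the paper's. The paper works in the maximal order $\Z[\alpha]$ of $\Q(\sqrt{-3})$, where the class number is $1$: it picks a generator of the split prime $\mathfrak p$, cubes it, checks by a direct expansion that $\mathfrak p^3 = a + 3b\alpha$ has its $\alpha$-coefficient divisible by $3$ (i.e.\ that the generator happens to lie in the suborder) and that $\gcd(a,b)=1$, and then multiplies by the elements $3+3\sqrt{-3}$ and $9+3\sqrt{-3}$ and verifies by hand that the product again has the required shape $u+3v\sqrt{-3}$ with $(u,v)=1$. You instead work from the start in the order $\Z[3\sqrt{-3}]$ of discriminant $-108$, where the relevant data is the form class group $C(-108)\simeq\Z/3\Z$: the fact that $|C(-108)|=3$ immediately forces $[\mathfrak p]^3$ to be principal, so $p^3$ is primitively represented by the principal form $x^2+27y^2$, and the $3$-divisibility condition that the paper verifies by hand is automatic because you never leave the suborder. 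The trade-off is that you invoke a small amount of extra machinery (the bijection between primitive representations and proper ideals of a non-maximal order, and the check that $p$ is coprime to the conductor $6$), whereas the paper gets by with only $h(\Q(\sqrt{-3}))=1$ and explicit algebra. Your final composition and $\gcd$ check are essentially Brahmagupta's identity in $\Z[\sqrt{-27}]$ together with the standard resultant-style identities, and they are carried out more explicitly than in the paper, which merely asserts that expanding the product gives the desired shape. One small imprecision: you say "$p$ is coprime to the conductor" without noting that the conductor of $\Z[3\sqrt{-3}]$ in $\Z[\alpha]$ is $6$, not $3$; this is harmless here since $p\equiv 1\pmod 3$ forces $p\neq 2,3$, but worth stating. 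It is also worth observing that your approach is exactly parallel to the argument the paper itself uses for its later discriminant $-27$ lemma, so you have in effect unified the two proofs.
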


\begin{proof}
Let $K = \Q(\sqrt{-3})$. Then
\[
u^2 + 27v^2 = \operatorname{Nm}_{K/\Q}(u+3v\sqrt{-3})
\]
and so we wish to prove that there are elements of the form $u+3v\sqrt{-3}$ with $(u,v)=1$ of norm $4 \cdot 3^k \cdot p^3$ for any $p \equiv 1 \bmod 3$ and for $k=2, 3$. Since norms are multiplicative, and we see that
\[
4\cdot 3^2 = \operatorname{Nm}_{K/\Q}(3+3\sqrt{-3})
\]
and
\[
4 \cdot 3^3 = \operatorname{Nm}_{K/\Q}(9+3\sqrt{-3}) 
\]
it remains to show that there is an element of norm $p^3$ in $K$.

Let $\alpha$ be a root of $x^2+x+1$, so that the ring of integers of $K$ is equal to $\Z[\alpha]$. Since $p \equiv 1 \bmod 3$, this prime splits in $K$ and so 
\[p = \mathfrak{p} \overline{\mathfrak{p}} = (x+y\alpha)(x+y\alpha^2)\]
for some $x,y \in \Z$. We have $\operatorname{Nm}_{K/\Q}(\mathfrak{p}) = p$ and so we claim $\mathfrak{p}^3$ is the element we want to take. We can find relatively prime $a,b \in \Z$ such that 
\[
\mathfrak{p}^3 = a + 3b\alpha.
\]
Indeed, simply writing $\mathfrak{p}^3 = (x + y \alpha)^3$ for some $x,y \in \Z$ and expanding shows that the coefficient of $\alpha$ is divisible by 3. Further, if $d=(a,b)>1$, then
\[
\mathfrak{p}^3 = (d)\left(\frac{a}{d} + 3\frac{b}{d}\alpha\right).
\]
However, taking norms on both sides shows that $d=p$, so then $\overline{\mathfrak{p}} \mid \mathfrak{p}^3$, which is impossible.

Thus, we have found elements in $K$ of norm $4 \cdot 3^2$, $4 \cdot 3^3$, and $p^3$ for any prime $p \equiv 1 \bmod 3$. The lemma follows from expanding the product of $3 + 3\sqrt{-3}$ and $9+3\sqrt{-3}$ with $\mathfrak{p}^3$ to show that the product is indeed of the form $u + 3v\sqrt{-3}$ with $(u,v)=1$.


\end{proof}

An immediate corollary of this lemma is:

\begin{corollary}
There are infinitely many $j \in \Q$ such that there exists an elliptic curve $E/\Q$ with $j(E)=j$ that satisfies $E(\Q)_{\text{tors}} = \{ \mathcal{O}\}$ and $E(\Q_{\infty,3})_{\text{tors}} \simeq \Z/2\Z \oplus \Z/2\Z$.

There are infinitely many $j \in \Q$ such that there exists an elliptic curve $E/\Q$ with $j(E)=j$ that satisfies $E(\Q)_{\text{tors}} \simeq \Z/3\Z$ and $E(\Q_{\infty,3})_{\text{tors}} \simeq \Z/2\Z \oplus \Z/6\Z$.
\end{corollary}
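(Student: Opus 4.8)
The plan is to combine Lemmas \ref{z3toz2xz6criteria} and \ref{z3toz2xz6infinite} and then verify that the construction yields infinitely many distinct $j$-invariants rather than just infinitely many curves with a bounded set of $j$-invariants. First I would invoke Lemma \ref{z3toz2xz6infinite} with, say, $k=2$: for every prime $p \equiv 1 \pmod 3$ it produces coprime integers $u,v$ with $u^2 + 27v^2 = 4\cdot 3^2 \cdot p^3$. Feeding this pair into Lemma \ref{z3toz2xz6criteria} then gives, for each such $p$, an elliptic curve $E_p/\Q$ with $j(E_p) = j_3(u^2/v^2)$ satisfying $E_p(\Q)_{\tors} = \{\mathcal O\}$ and $E_p(\Q_{\infty,3})_{\tors} \simeq \Z/2\Z \oplus \Z/2\Z$, together with a quadratic twist $E_p'$ with $E_p'(\Q)_{\tors} \simeq \Z/3\Z$ and $E_p'(\Q_{\infty,3})_{\tors} \simeq \Z/2\Z \oplus \Z/6\Z$. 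Since a quadratic twist preserves the $j$-invariant, $j(E_p') = j(E_p)$ as well, so both statements of the corollary will follow from the same family once we know the $j(E_p)$ take infinitely many values.

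The main point to check is therefore that $\{j(E_p) : p \equiv 1 \pmod 3\}$ is infinite. I would argue that the rational function $j_3(h) = (h+27)(h+3)^3/h$ is non-constant, hence takes any given value only finitely many times (at most $4$ values of $h$ per value of $j_3$), so it suffices to show that the parameter $h_p := u^2/v^2$ takes infinitely many values as $p$ ranges over primes $\equiv 1 \pmod 3$. This in turn follows from a size estimate: from $u^2 + 27v^2 = 4\cdot 3^2\cdot p^3$ we get $\max(|u|,|v|) \to \infty$ as $p \to \infty$, and since $(u,v)=1$ the reduced fraction $u^2/v^2$ cannot stabilize — for any fixed rational $h_0 = a^2/b^2$ in lowest terms, $h_p = h_0$ forces $v^2 \mid$ (a bounded quantity) and hence bounds $p$. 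Concretely, $h_p = h_0$ would give $|u| = |a|\cdot|v|/|b| \le C|v|$ and then $4\cdot 3^2 p^3 = u^2 + 27v^2 \le (C^2+27)v^2$, while simultaneously $v^2 \le u^2 + 27 v^2 = 4 \cdot 3^2 p^3$ forces nothing useful directly, so instead I note $b^2 \mid v^2$ and $a^2 \mid u^2$ with $(u,v)=1$ forcing $v^2 = b^2 v_0^2$, $u^2 = a^2 v_0^2$ for an integer $v_0$; then $4\cdot 3^2 p^3 = (a^2 + 27 b^2) v_0^2$, which for fixed $a,b$ has only finitely many prime solutions $p$ (as $p^3/v_0^2$ is then a fixed rational, forcing $v_0$ and hence $p$ into a finite set). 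Hence each value of $h$ occurs for only finitely many $p$, and $j_3$ being non-constant, each value of $j$ occurs for only finitely many $p$ as well, giving infinitely many distinct $j$-invariants.

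I expect the only real obstacle to be this last bookkeeping step — ensuring the $j$-invariants are genuinely distinct — since the torsion-growth behaviour itself is handed to us wholesale by Lemma \ref{z3toz2xz6criteria}. One subtlety worth flagging is that I should confirm $v \neq 0$ (so that $h_p$ is well-defined and $j_3(h_p)$ makes sense): since $u^2 + 27v^2 = 4\cdot 3^2 p^3$ with $p \equiv 1 \pmod 3$, we cannot have $v = 0$ as that would force $u^2 = 4\cdot 3^2 p^3$, impossible for $p > 3$ as $p^3$ is not a square. With that dispatched, the corollary follows immediately from the two lemmas.
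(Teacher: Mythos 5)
Your proposal is correct and follows the same route as the paper: the corollary is obtained by combining Lemma~\ref{z3toz2xz6criteria} with Lemma~\ref{z3toz2xz6infinite}. The paper simply labels this ``an immediate corollary'' and offers no further argument; you supply the (genuinely necessary, if routine) bookkeeping step of checking that one obtains infinitely many distinct $j$-invariants rather than just infinitely many curves. Your reasoning there is sound: $j_3$ is a degree-$4$ rational function, so it suffices that $h_p = u^2/v^2$ takes infinitely many values, and each fixed value $h_0$ can arise from at most finitely many primes $p$. One small simplification: the detour through $v_0$ is unnecessary. Since $(u,v)=1$ gives $\gcd(u^2,v^2)=1$, and if $h_0=a^2/b^2$ is in lowest terms then $\gcd(a^2,b^2)=1$ as well, the equality $u^2 b^2 = a^2 v^2$ immediately forces $u^2 = a^2$ and $v^2 = b^2$; hence $36 p^3 = a^2 + 27 b^2$ pins down $p$ uniquely, so $v_0 = 1$ automatically. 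Also, $u \neq 0$ and $v\neq 0$ both follow directly from $(u,v)=1$ together with the equation (if either vanished the other would be $\pm 1$, contradicting $u^2+27v^2 = 4\cdot 3^k p^3$), so your separate square-free argument for $v\neq 0$ is correct but can be streamlined. In sum, the proposal is a valid, slightly more explicit version of the paper's proof.
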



Now we illustrate a parallel idea for torsion growth from $\{ \mathcal{O} \}$ to $\Z/7\Z$ and from $\Z/3\Z$ to $\Z/9\Z$ over $\Q_{\infty,3}$, respectively.

\begin{lemma}\label{trivtoz7criteria}
Let
\[
j_7(h)=\frac{(h^2+13h+49)(h^2+5h+1)^3}{h}.
\]
Suppose that we have $u,v \in \Z$ with $(u,v)=1$ and $u^2+13uv+49v^2 = 3^k \cdot p^3$ for some $k=2,3$ and some $p \equiv 1 \pmod 3$. Then there is an elliptic curve $E/\Q$ with $j$-invariant $j_7(\frac{u}{v})$ such that $E(\Q)_{\text{tors}} = \{ \mathcal{O} \}$ and $E(\Q_{\infty,3})_\tors \simeq \Z/7\Z$.
\end{lemma}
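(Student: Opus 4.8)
The plan follows the template of Lemma~\ref{z3toz2xz6criteria}, with a rational $7$-isogeny playing the role of the $3$-isogeny and the cubic field generated by the $x$-coordinates of its kernel playing the role of the $2$-torsion field. The mechanism is this: if $E/\Q$ has a rational $7$-isogeny with kernel $C$, on which $G_\Q$ acts through a character $\chi$, then the three $x$-coordinates of the nonzero points of $C$ are the roots of a cubic factor $g_h(x)\in\Q[x]$ of the $7$-division polynomial $\psi_7$, and $G_\Q$ permutes them through $(\Z/7\Z)^\times/\{\pm1\}\simeq\Z/3\Z$; so, when $g_h$ is irreducible, its splitting field $L_h$ is a cyclic cubic field. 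The obstruction to $C$ being pointwise defined over $L_h$ is $\chi|_{G_{L_h}}$, a quadratic character of $G_{L_h}$ which equals the restriction of the quadratic character $\chi^3\colon G_\Q\to\{\pm1\}$; writing $\chi^3$ as the character attached to some $D\in\Q^*/(\Q^*)^2$, the quadratic twist $E^{(D)}/\Q$ (which has the same $j$-invariant) satisfies $C\subseteq E^{(D)}(L_h)$, since twisting by $D$ replaces $\chi$ by $\chi^4$, of order $3$ and with fixed field $L_h$. Consequently, if in addition $L_h=\Q_{1,3}$, then $E^{(D)}$ gains a point of order $7$ over $\Q_{1,3}\subseteq\Q_{\infty,3}$ (consistent with Corollary~\ref{kojepolje}, since $v_3(\phi(7))=1$), has no rational $7$-torsion because $\chi^4\neq1$, and — because $j_7(h)$ is, for all but finitely many $h$, the $j$-invariant of a curve with no further isogeny — has trivial rational torsion and torsion exactly $\Z/7\Z$ over $\Q_{\infty,3}$ by Theorem~\ref{thm-isog} together with the analysis in Section~\ref{proofofp3result}. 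So the whole problem reduces to choosing $h$ so that $L_h=\Q_{1,3}$.

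To pin down $L_h$, I would take a Weierstrass model for $E_h$ with $j(E_h)=j_7(h)$, factor $\psi_7$ over $\Q(h)$ to extract the cubic factor $g_h$, depress it by a Tschirnhaus substitution to $x^3+A(h)x+B(h)$, and homogenise by setting $h=u/v$ in lowest terms to obtain homogeneous $A(u,v),B(u,v)\in\Z[u,v]$. By Theorem~\ref{cubicconductor}, after the standard reduction removing cube factors from the pair $(A,B)$, the conductor of the cyclic cubic field $\Q(\theta)$ with $\theta^3+A\theta+B=0$ equals $3^\alpha$, with $\alpha\in\{0,2\}$, times the product of the primes $p\equiv1\bmod3$ dividing $(A,B)$. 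A cyclic cubic field whose conductor is a power of $3$ must have conductor $9$, hence be $\Q(\zeta_9)^+=\Q_{1,3}$. Thus it suffices to arrange that, after stripping cubes, $\gcd(A(u,v),B(u,v))$ involves only $3$ and primes $\equiv2\bmod3$.

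I would then compute $\gcd(A(u,v),B(u,v))$ and expect, in analogy with the appearance of $u^2+27v^2$ in Lemma~\ref{z3toz2xz6criteria}, that its part coprime to $6$ and to cubes is governed by the norm form $u^2+13uv+49v^2$, which has discriminant $-27$ (note $4(u^2+13uv+49v^2)=(2u+13v)^2+27v^2$), all other common factors of $A(u,v)$ and $B(u,v)$ being powers of $2$ and $3$. Then the hypothesis $u^2+13uv+49v^2=3^k p^3$ with $k\in\{2,3\}$ and $p\equiv1\bmod3$ forces this coprime-to-cubes part down to a power of $3$, so $L_h$ has $3$-power conductor, i.e.\ $L_h=\Q_{1,3}$, and the construction of the first paragraph produces the required curve.

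I expect the computational algebra of the middle steps to be the main obstacle: extracting and depressing the degree-$3$ factor $g_h$ of the degree-$24$ polynomial $\psi_7$, and then determining $\gcd(A(u,v),B(u,v))$ precisely enough to identify $u^2+13uv+49v^2$ as the controlling factor. The remaining points are routine: that the twisting datum descends to a quadratic character of $G_\Q$ (immediate since $7$ is odd, so twisting tensors $E[7]$ by $\chi_D$), that $g_h$ is irreducible so that $L_h$ is genuinely cubic, and that the constructed twist has no extra rational torsion and no further growth over $\Q_{\infty,3}$ — the last of these following from Theorem~\ref{thm-isog} and Section~\ref{proofofp3result} once $h$ avoids the finitely many values producing a $14$- or $21$-isogeny.
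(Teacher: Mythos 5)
Your proposal follows essentially the same route as the paper: parametrize by $X_0(7)$, isolate the cubic factor of the $7$-division polynomial, depress and homogenize to $x^3+A(u,v)x+B(u,v)$, identify $\gcd(A,B)$ as $2^a3^b(u^2+13uv+49v^2)$ (the form of discriminant $-27$), and invoke the conductor formula of Theorem~\ref{cubicconductor} to force the splitting field to be $\Q_{1,3}$, finishing with an appropriate quadratic twist. Your explicit treatment of the twisting character $\chi$, $\chi^3$, $\chi^4$ and of the final verifications ($E(\Q)_\tors$ trivial, no growth past $\Z/7\Z$) fills in steps the paper leaves implicit, but the underlying argument is the same.
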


\begin{proof}

Let $E/\Q$ be an elliptic curve with $E(\Q_{\infty,3})_{\text{tors}} \simeq \Z/7\Z$. Then $E/\Q$ has a 7-isogeny over $\Q$, so it corresponds to a rational point on $X_0(7)$, and so
\[
j(E)=\frac{(h^2+13h+49)(h^2+5h+1)^3}{h}
\]
for some $h \in \Q$. A model for an elliptic curve with such a $j$-invariant is given by
\[
E_h : y^2 = x^3 - \frac{27(h^2+5h+1)^3(h^2+13h+49)}{(h^4+14h^3+63h^2+70h-7)^2}x + \frac{54(h^2+5h+1)^3(h^2+13h+49)}{(h^4+14h^3+63h^2+70h-7)^2}.
\]
We can compute the $7^{\text{th}}$ division polynomial of $E_h$, and obtain that it has one irreducible factor of degree 3, which we denote by $f_7$, and one irreducible factor of degree 21. We wish to determine for which values of $h$ does this degree 3 factor define the extension $\Q_{1,3}$.

By a change of coordinates we see that $\Q(f_3(h))=\Q(x^3+A(h)x+B(h))$ where
\[
A(h) = -3(h^2+13h+49) \text{ and } B(h)=-(2h+13)(h^2+13h+49).
\]
We homogenize the equations by letting $h=\frac{u}{v}$ with $(u,v)=1$ so that all parameters are integers to obtain
\[
A(u,v) = -3(u^2+13uv+49v^2) \text{ and } B(u,v)= -(2u+13v)(u^2+13uv+49v^2).
\]
By Theorem \ref{cubicconductor} the conductor will be a power of 3 when the gcd of $A(u,v)$ and $B(u,v)$ is divisible only by 3, primes $p \equiv 2 \mod 3$, and by cubes, since by a change of variables, we can remove cubes from the gcd of $A(u,v)$ and $B(u,v)$.

We can see that $(A(u,v),B(u,v))=2^a \cdot 3^b \cdot (u^2+13uv+49v^2)$ for some $a,b \in \Z^{\geq 0}$. Thus, if we choose $u$ and $v$ as in the statement of the lemma, we see that $\Q(f_7)$ will have conductor a power of 3, and thus $\Q(f_7) = \Q_{1,3}$. Now taking an appropriate quadratic twist, we can make $\Q(f_7) = \Q(P)$ for a point $P \in E[7]$ of order 7.

\end{proof}

\begin{lemma}\label{z3toz9criteria}
Let $h = \frac{u}{v}$ for $u,v \in \Z$ with $(u,v)=1$ satisfying
\[
u^2 + 3uv + 9v^2 = 3^3 \cdot p^3
\]
for some prime $p \equiv 1 \pmod 3$ and let $E_h/\Q$ be the elliptic curve given by
\[
E_h : y^2 = x^3 - 27h^5(h^3-24)^5 x + 54h^6(h^3-24)^6(h^6 - 36h^3+216).
\]
Then $E_h(\Q)_{\text{tors}} \simeq \Z/3\Z$ and $E_h(\Q_{\infty,3})_{\text{tors}} \simeq \Z/9\Z$.
\end{lemma}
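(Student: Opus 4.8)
The plan is to establish three facts about the explicit curve $E_h$: it has a rational point of order $3$; under the norm hypothesis a point of order $9$ becomes defined over the first layer $\Q[1][3]=\Q(\zeta_9)^{+}$; and no larger torsion occurs over $\Q[3]$. The structure parallels Lemmas \ref{z3toz2xz6criteria} and \ref{trivtoz7criteria}, with the additional feature that no quadratic twist is available, since the rational $3$-torsion must be preserved; that is precisely why the model $E_h$ is pinned down so tightly.

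First I would record the arithmetic of the model by direct computation. Computing the $j$-invariant gives $j(E_h)=h^{3}(h^{3}-24)^{3}/(h^{3}-27)$, a degree-$12$ function of $h$, so $E_h$ lies on $X_0(9)$: it carries a rational $9$-isogeny with cyclic kernel $C$ of order $9$. One checks from the model that the rational $3$-torsion subgroup is the order-$3$ subgroup of $C$, equivalently that the character $\psi\colon G_\Q\to(\Z/9\Z)^\times$ giving the action on $C$ has image in the subgroup $\{1,4,7\}$ (the condition that $3P=T$ be fixed). Thus a generator $P$ of $C$ has order $9$ with $3P=T$ rational, so $E_h(\Q)_{\tors}\supseteq\Z/9\Z$ is not yet claimed — rather $E_h(\Q)_{\tors}\supseteq\Z/3\Z$. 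Factoring $\psi_9/\psi_3$ (the quotient of the $9$- and $3$-division polynomials of $E_h$) produces an irreducible cubic factor $f_9(h)$ giving the $x$-coordinates of the order-$9$ points of $C$, and $\Q(f_9(h))/\Q$ is cyclic of degree $3$. Since $\{1,4,7\}$ contains no element of order $2$ (in particular not $-1$), $\sigma$ fixes $x(P)$ only if it fixes $P$, so $\Q(P)=\Q(f_9(h))$. It therefore suffices to identify this cubic field.

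Next, as in the two lemmas above, I would put $f_9$ into depressed form $x^{3}+A(h)x+B(h)$ by a rational change of variables, homogenise by writing $h=u/v$ in lowest terms so that $A,B$ become integral forms $A(u,v),B(u,v)$, and compute
\[
\gcd\bigl(A(u,v),B(u,v)\bigr)=2^{a}\,3^{b}\,(u^{2}+3uv+9v^{2})
\]
for some $a,b\ge 0$, the only primes able to divide an extra common factor of $A(u,v)$ and $B(u,v)$ being $2$ and $3$. If $u^{2}+3uv+9v^{2}=3^{3}p^{3}$ with $p\equiv1\pmod3$, then, using $(u,v)=1$ to see that $p$ divides $B(u,v)$ to order exactly $3$, the prime $p$ enters the gcd only to the third power and is removed by a cube change of variables, leaving a gcd divisible only by $2$ and $3$; since $2\not\equiv1\pmod3$, Theorem \ref{cubicconductor} gives that the conductor of $\Q(f_9)$ is a power of $3$. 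The unique cyclic cubic field whose conductor is a power of $3$ is the one of conductor $9$, namely $\Q(\zeta_9)^{+}=\Q[1][3]$. Hence $P\in E_h(\Q[1][3])$, so $\Z/9\Z\subseteq E_h(\Q[3])_{\tors}$; and since $\Q[1][3]$ is genuinely cubic (equivalently $f_9$ is irreducible over $\Q$), $P\notin E_h(\Q)$.

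Finally I would nail down the torsion groups. By Corollary \ref{cor-weil}, $E_h(\Q[3])[3^\infty]$ is cyclic, since $\Q[3]$ contains no primitive cube root of unity. If $q\ge5$ were a prime with $E_h(\Q[3])[q]\neq\{O\}$, then together with $P$ we would obtain a cyclic subgroup of order $9q\ge45$, hence a rational $9q$-isogeny by Lemma \ref{lem-j-k_isog}, contradicting Theorem \ref{thm-isog}. So $E_h(\Q[3])_{\tors}$ is a $\{2,3\}$-group appearing in the list of Theorem \ref{p3result} and containing $\Z/9\Z$; the only such groups in that list are $\Z/9\Z$ and $\Z/27\Z$. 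The latter would force a point of order $27$ over $\Q[3]$ and hence, by the proof of Theorem \ref{p3result}, $E_h\cong 27a4$; one checks $j(E_h)\neq j(27a4)$. Therefore $E_h(\Q[3])_{\tors}\simeq\Z/9\Z$, and since $E_h(\Q)_{\tors}$ lies between $\Z/3\Z$ and $\Z/9\Z$ but does not contain the order-$9$ point $P$, $E_h(\Q)_{\tors}\simeq\Z/3\Z$. The main obstacle is the middle paragraph: carrying out the gcd and conductor computation carefully — including the exact $p$-divisibility of $B(u,v)$ — so that Theorem \ref{cubicconductor} yields precisely $\Q[1][3]$ under the norm hypothesis, and verifying that the given model really does carry the rational $3$-torsion point inside the kernel of its $9$-isogeny for every $h$.
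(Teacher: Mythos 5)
Your proposal follows essentially the same route as the paper: both recognize that $E_h$ carries a rational $3$-torsion point together with a rational cyclic $9$-isogeny whose kernel contains it, both isolate a cubic factor $f_9$ of $\psi_9/\psi_3$, both pass to a depressed form $x^3+A(h)x+B(h)$ and homogenize to get $\gcd(A(u,v),B(u,v))=2^a3^b(u^2+3uv+9v^2)$, both apply Theorem~\ref{cubicconductor} to pin the conductor of $\Q(f_9)$ to a power of $3$, and both use the action by $\{1,4,7\}\subset(\Z/9\Z)^\times$ to conclude $\Q(P)=\Q(f_9)=\Q_{1,3}$. The paper simply invokes \cite{DDH}~Table~6 for the model and its level structure, while you rederive the $j$-invariant $j(E_h)=h^3(h^3-24)^3/(h^3-27)$ and argue via $X_0(9)$; that is fine in outline, but note that being degree~$12$ in $h$ does not by itself place $E_h$ on $X_0(9)$ --- one still has to match the explicit $j$-map (or fall back on the division-polynomial factorization, as in fact both proofs do). Your remark that no twist is available here because the rational $3$-torsion must be preserved is a nice point not spelled out in the paper. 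Your final paragraph, ruling out torsion strictly larger than $\Z/9\Z$ over $\Q_{\infty,3}$ by appealing to Theorem~\ref{p3result} and to the uniqueness of $27a4$, makes explicit a step the paper leaves implicit and is a worthwhile addition; just be careful that the comparison $j(E_h)\ne j(27a4)$ needs to be checked for the $h$ satisfying the norm constraint (it holds, e.g.\ because $27a4$ has conductor $27$ while $E_h$ has bad reduction at $p$), and that you are implicitly using the irreducibility of $f_9$ over $\Q$ when applying Theorem~\ref{cubicconductor}, which the paper also takes for granted.
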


\begin{proof}

For such torsion growth to occur, an elliptic curve must have a 3-torsion point over $\Q$ as well as a 9-isogeny over $\Q$ whose kernel contains this point of order 3. A model for elliptic curves over $\Q$ with this level structure is given in \cite{DDH} Table 6, and they are elliptic curves precisely of the form
\[
E_h : y^2 = x^3 - 27h^5(h^3-24)^5 x + 54h^6(h^3-24)^6(h^6 - 36h^3+216)
\]
for some $h \in \Q$. We can compute the $9^{th}$ division polynomial of $E_h$ and divide it by the $3^{rd}$ division polynomial and obtain one factor of degree 3, which we denote by $f_9$. We wish to determine for which values of $h$ does this degree 3 factor define the extension $\Q_{1,3}$.

By a change of coordinates we see that $\Q(f_9(h)) = \Q(x^3 + A(h)x + B(h))$ where
\[
A(h) = - 432(h^2+3h+9) \;\;\text{ and }\;\; B(h) = -1728(2h+3)(h^2+3h+9).
\]
We homogenize the equations by letting $h = \frac{u}{v}$ with $(u,v)=1$ so that all parameters are integers to obtain
\[
A(u,v) = - 432(u^2+3uv+9v^2) \;\;\text{ and }\;\; B(u,v) = -1728(2u+3v)(u^2+3uv+9v^2).
\]
By Theorem \ref{cubicconductor} the conductor will be a power of 3 when the gcd of $A(u,v)$ and $B(u,v)$ is divisible only by 3, primes $p \equiv 2 \mod 3$,  and by cubes, since by a change of variables, we can remove cubes from the gcd of $A(u,v)$ and $B(u,v)$.

We can see that $(A(u,v),B(u,v)) = 2^a \cdot 3^b \cdot (u^2 + 3uv + 9v^2)$ for some $a,b \in \Z^{\geq 0}$. Thus, if we choose $u$ and $v$ as in the statement of the lemma, we see that $\Q(f_9)$ will have conductor a power of 3, and thus $\Q(f_9) = \Q_{1,3}$.  Now, if $P$ is a generator of the kernel of the isogeny, then $G_{\Q}$ acts on $\langle P \rangle$ by multiplication by 1, 4, or 7, since $3P \in E(\Q)$.  Hence, $P$ is fixed by an index 3 subgroup of $G_{\Q}$, and thus defined over a cubic field, in particular, the cubic field where its $x$-coordinate is defined, i.e. $\Q(f_9) = \Q_{1,3}$.
\end{proof}

We remark that the criteria given in Lemma \ref{trivtoz7criteria} and Lemma \ref{z3toz9criteria} are asking when $3^3 p^3$ is primitively represented by some binary quadratic form. In both Lemmas, the binary quadratic forms have discriminant $-27$. We now prove that both the above criteria are satisfied for infinitely many primes $p$ by proving a statement about integers represented by binary quadratic forms of discriminant $-27$.

\begin{lemma}
Let $f(x,y)$ be any binary quadratic form of discriminant $-27$. Then there exist primitive solutions to $f(x,y) = 3^3 \cdot p^3$ for all $p \equiv 1 \pmod 3$.
\end{lemma}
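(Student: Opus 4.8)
The plan is to exploit the fact that the discriminant $-27$ has class number one, reducing the statement to a single form, and then to realize that form as a norm form in the Eisenstein integers.

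First I would recall that the primitive positive-definite binary quadratic forms of discriminant $-27$ constitute a single $\SL_2(\Z)$-class: running the usual reduction one finds that the only reduced such form is $Q_0(x,y)=x^2+xy+7y^2$ (the only other reduced form of that discriminant, $3x^2+3xy+3y^2$, being imprimitive). In particular the forms $x^2+3xy+9y^2$ and $x^2+13xy+49y^2$ occurring in Lemmas~\ref{trivtoz7criteria} and~\ref{z3toz9criteria} are $\SL_2(\Z)$-equivalent to $Q_0$, and since equivalent forms primitively represent the same integers it suffices to produce a single primitive representation of $27p^3$ by $Q_0$, or, what is the same, by the equivalent form $g(x,y)=x^2-3xy+9y^2$.

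Next I would identify $g$ with the norm form of the order $\mathcal{O}=\Z[3\omega]$ of conductor $3$ in $K=\Q(\sqrt{-3})$, where $\omega=e^{2\pi i/3}$ and $\mathcal O_K=\Z[\omega]$: in the $\Z$-basis $\{1,3\omega\}$ of $\mathcal{O}$ one has $N_{K/\Q}(a+3b\omega)=a^2-3ab+9b^2=g(a,b)$, and a representation $g(a,b)=n$ is primitive exactly when $\gcd(a,b)=1$. I would then factor $27p^3$ in $\mathcal O_K$: the prime $3$ ramifies as $3=-\omega^2\lambda^2$ with $\lambda=1-\omega$ ($N\lambda=3$, $\lambda^2=-3\omega$), and since $p\equiv 1\pmod 3$ the prime $p$ splits, $p=\pi\overline{\pi}$. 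Taking $\alpha=\lambda^3\pi^3=-3\omega\lambda\,\pi^3=-3(1+2\omega)\pi^3$ gives an element of norm $27p^3$, and the claim is that $\alpha$ furnishes the desired primitive representation.

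The one genuinely delicate point, which I would treat carefully, is the primitivity. Put $\beta:=\alpha/3=-\omega\lambda\pi^3=x+y\omega\in\mathcal O_K$; then $\alpha=3x+y\cdot(3\omega)$, so in the basis $\{1,3\omega\}$ the representation is $(a,b)=(3x,y)$, and one must check $\gcd(3x,y)=1$, i.e.\ $\gcd(x,y)=1$ and $3\nmid y$. The equality $\gcd(x,y)=1$ holds because $\beta$ is a unit times $\lambda\pi^3$, so the only rational primes that could divide $\beta$ are $3$ and $p$, but $v_\lambda(\beta)=1$ gives $\lambda^2\nmid\beta$ hence $3\nmid\beta$, and $v_{\overline{\pi}}(\beta)=0$ gives $p\nmid\beta$. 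The condition $3\nmid y$ is equivalent to $\beta\notin\mathcal O=\Z+3\mathcal O_K$; since $3\mathcal O_K=\lambda^2\mathcal O_K$ and $\mathcal O_K/\lambda^2\mathcal O_K\cong\F_3[\epsilon]/(\epsilon^2)$ with $\epsilon$ the image of $\lambda$, the image of $\mathcal O$ in this quotient is the constant subring $\F_3$, whereas $v_\lambda(\beta)=1$ together with $\omega\equiv 1$ and $\pi^3\equiv\pi\not\equiv 0\pmod\lambda$ shows that $\beta$ reduces to a nonzero multiple of $\epsilon$, which is not in $\F_3$; hence $\beta\notin\mathcal O$. This yields $\gcd(3x,y)=1$, so $(3x,y)$ is a primitive representation of $27p^3$ by $g$, and therefore by every primitive form of discriminant $-27$, for every prime $p\equiv 1\pmod 3$. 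The main obstacle is exactly this bookkeeping between the two orders: $\alpha$ is divisible by the rational integer $3$ inside the maximal order $\mathcal O_K$ but not inside the conductor-$3$ order $\mathcal O$, and controlling $y\bmod 3$ via the ramification of $3$ is where the real content lies; the reduction-theory input, the factorization, and the norm computation are all routine.
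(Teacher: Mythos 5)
Your proposal is correct and follows essentially the same route as the paper: reduce to the principal form via class number one for discriminant $-27$, identify it with the norm form of the conductor-$3$ order $\Z+3\mathcal{O}_K$ in $\Q(\sqrt{-3})$, and produce the element $\lambda^3\pi^3$ using the ramification of $3$ and the splitting of $p$. The only real difference is that you spell out the primitivity check explicitly via $\lambda$-adic valuation and the quotient $\mathcal{O}_K/\lambda^2$, whereas the paper delegates part of it to the argument of Lemma~\ref{z3toz2xz6infinite} and leaves the final verification (``expanding the product'') implicit.
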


\begin{proof}

Since $f(x,y)$ has discriminant $-27$, there is an $\SL_2(\Z)$ transformation of variables so that 
\[
f(x,y) \sim u^2 + uv + 7v^2,
\]
i.e. $\begin{bmatrix}
u \\ v 
\end{bmatrix} = M \begin{bmatrix}
x \\ y
\end{bmatrix}$ for some $M \in \SL_2(\Z)$. Moreover, $\SL_2(\Z)$ transformations preserve the gcd of the coordinates, so we need only find primitive solutions to
\[
u^2 + uv + 7v^2 = 3^3 \cdot p^3
\]
for all primes $p \equiv 1 \pmod 3$. We let $K = \Q(\sqrt{-3})$ with ring of integers by $\Z[\alpha]$ where $\alpha = \frac{-1+\sqrt{-3}}{2}$ is a primitive $3^{rd}$ root of unity. Notice that
\[
u^2 + uv + 7v^2 = \operatorname{Nm}_{K/\Q}(u + 3v\alpha),
\]
so we want to find elements in $K$ of the form $u+3v\alpha$ with $(u,v)=1$ and norm $3^3 \cdot p^3$.

Let $p$ be a prime such that $p \equiv 1 \pmod 3$. Then $p$ is split in $K$, so
\[
p = \mathfrak{p} \overline{\mathfrak{p}} = (x + y \alpha)(x+y\alpha^2)
\]
for some $x,y \in \Z$. Recalling the argument given in the proof of Lemma \ref{z3toz2xz6infinite},  
\[
\mathfrak{p}^3 = a + 3b\alpha
\]
for some relatively prime integers $a,b \in \Z$. Further, we have that $3$ is ramified in $K$,
\[
(3) = \mathfrak{p}_3^2 = (1 + 2\alpha)^2
\]
and finally that $\mathfrak{p}_3^3 = (3 + 2(3\alpha))$. Thus, an element of norm $3^3 p^3$ is
\[
\mathfrak{p}_3^3\mathfrak{p}^3
\]
and the lemma follows from expanding the product of the two elements to show that the product is indeed of the form $u + 3v\alpha$ with $(u,v)=1$.
\end{proof}

Thus, an immediate corollary is

\begin{corollary}
There are infinitely many $j \in \Q$ such that there exists an elliptic curve $E/\Q$ with $j(E)=j$ that satisfies $E(\Q)_{\text{tors}} = \{ \mathcal{O}\}$ and $E(\Q_{\infty,3})_{\text{tors}} \simeq \Z/7\Z$.

There are infinitely many $j \in \Q$ such that there exists an elliptic curve $E/\Q$ with $j(E)=j$ that satisfies $E(\Q)_{\text{tors}} \simeq \Z/3\Z$ and $E(\Q_{\infty,3})_{\text{tors}} \simeq \Z/9\Z$.
\end{corollary}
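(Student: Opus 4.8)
The plan is to feed the preceding lemma — that every binary quadratic form of discriminant $-27$ primitively represents $3^3 p^3$ for all primes $p \equiv 1 \pmod 3$ — into Lemma \ref{trivtoz7criteria} and Lemma \ref{z3toz9criteria}. The elementary observation that makes this work is that the two forms occurring in the hypotheses of those lemmas have discriminant $-27$: indeed $13^2 - 4 \cdot 49 = -27$ and $3^2 - 4 \cdot 9 = -27$. Combined with Dirichlet's theorem on primes in arithmetic progressions, which supplies infinitely many primes $p \equiv 1 \pmod 3$, this should give both infinitude statements once we check that the construction does not collapse to finitely many $j$-invariants.

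First I would treat the growth $\{\mathcal{O}\} \rightarrow \Z/7\Z$. Fix a prime $p \equiv 1 \pmod 3$. Applying the preceding lemma to the form $f(x,y) = x^2 + 13xy + 49y^2$ produces coprime $u, v \in \Z$ with $u^2 + 13uv + 49v^2 = 3^3 p^3$; Lemma \ref{trivtoz7criteria} (with $k = 3$) then yields an elliptic curve $E/\Q$ with $j(E) = j_7(u/v)$, $E(\Q)_\tors = \{\mathcal{O}\}$ and $E(\Q_{\infty,3})_\tors \simeq \Z/7\Z$. The argument for $\Z/3\Z \rightarrow \Z/9\Z$ is identical in shape: applying the preceding lemma to $f(x,y) = x^2 + 3xy + 9y^2$ gives coprime $u, v$ with $u^2 + 3uv + 9v^2 = 3^3 p^3$, and Lemma \ref{z3toz9criteria} produces $E_h/\Q$ with $h = u/v$, $E_h(\Q)_\tors \simeq \Z/3\Z$ and $E_h(\Q_{\infty,3})_\tors \simeq \Z/9\Z$.

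It remains to extract infinitely many distinct $j$-invariants. Write $h = u/v$ in lowest terms with $v > 0$, so that $h$ determines $(u,v)$ and hence the integer $N(h) = u^2 + 13uv + 49v^2$ (respectively $u^2 + 3uv + 9v^2$). The equation $N(h) = 3^3 p^3$ then determines $p = \sqrt[3]{N(h)/27}$ uniquely, so each value of $h$ arises from at most one admissible prime $p$. Since infinitely many primes $p \equiv 1 \pmod 3$ occur, infinitely many distinct rationals $h$ occur. Finally $h \mapsto j_7(h)$, respectively $h \mapsto j(E_h)$ read off the Weierstrass model of Lemma \ref{z3toz9criteria}, is a non-constant rational function of $h$, hence takes each value only finitely often; so infinitely many distinct $j$-invariants are realized in each case.

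Essentially all of the arithmetic substance is already packaged in the three lemmas being invoked, so the only point requiring genuine care — and the step I would treat as the main obstacle — is the last one: verifying that the one-parameter families do not degenerate onto finitely many $j$-invariants, which here reduces to the observation above that a given $h$ can come from at most one admissible prime together with the non-constancy of the two $j$-maps.
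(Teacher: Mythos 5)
Your proposal is correct and follows exactly the route the paper intends: observe that both forms $u^2+13uv+49v^2$ and $u^2+3uv+9v^2$ have discriminant $-27$, invoke the preceding lemma (with $k=3$) to produce primitive representations of $3^3p^3$ for every prime $p\equiv 1\pmod 3$, and feed these into Lemmas \ref{trivtoz7criteria} and \ref{z3toz9criteria}. The paper leaves the ``infinitely many distinct $j$-invariants'' step implicit; your closing paragraph, noting that each $h$ can arise from at most one prime and that the $j$-maps are non-constant rational functions, correctly fills in that detail.
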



\begin{table}
\begin{center}

\begin{tabular}{|r|c|c|}\hline
Cremona Reference & $E(\Q)_\tors$ & $E(\Q_{\infty,2})_\tors$ \\\hline
\href{http://www.lmfdb.org/EllipticCurve/Q/704d1}{\texttt{704d1}}&$\{\mathcal{O}\}$&$\ZZ/3\ZZ$ \\\hline
\href{http://www.lmfdb.org/EllipticCurve/Q/24a6}{\texttt{24a6}}&$\ZZ/2\ZZ$ &$\ZZ/4\ZZ$ \\\hline
\href{http://www.lmfdb.org/EllipticCurve/Q/704a1}{\texttt{704a1}}&$\{\mathcal{O}\}$&$\ZZ/5\ZZ$ \\\hline
\href{http://www.lmfdb.org/EllipticCurve/Q/320c1}{\texttt{320c1}}&$\ZZ/2\ZZ$ &$\ZZ/6\ZZ$ \\\hline
\href{http://www.lmfdb.org/EllipticCurve/Q/832f1}{\texttt{832f}}&$\{\mathcal{O}\}$&$\ZZ/7\ZZ$ \\\hline
\href{http://www.lmfdb.org/EllipticCurve/Q/24a3}{\texttt{24a3}}&$\ZZ/4\ZZ$ &$\ZZ/8\ZZ$ \\\hline
\href{http://www.lmfdb.org/EllipticCurve/Q/1728j3}{\texttt{1728j3}}&$\{\mathcal{O}\}$&$\ZZ/9\ZZ$ \\\hline
\href{http://www.lmfdb.org/EllipticCurve/Q/768b1}{\texttt{768b1}}&$\ZZ/2\ZZ$ &$\ZZ/10\ZZ$ \\\hline
\href{http://www.lmfdb.org/EllipticCurve/Q/30a5}{\texttt{30a5}}&$\ZZ/6\ZZ$ &$\ZZ/12\ZZ$ \\\hline
\href{http://www.lmfdb.org/EllipticCurve/Q/14a5}{\texttt{14a5}}&$\ZZ/2\ZZ$ &$\ZZ/2\ZZ\oplus\ZZ/2\ZZ$ \\\hline
\href{http://www.lmfdb.org/EllipticCurve/Q/24a2}{\texttt{24a2}}&$\ZZ/2\ZZ\oplus\ZZ/2\ZZ$ &$\ZZ/2\ZZ\oplus\ZZ/4\ZZ$ \\\hline
\href{http://www.lmfdb.org/EllipticCurve/Q/14a2}{\texttt{14a2}}&$\ZZ/6\ZZ$ &$\ZZ/2\ZZ\oplus\ZZ/6\ZZ$ \\\hline
\href{http://www.lmfdb.org/EllipticCurve/Q/32a4}{\texttt{32a4}}&$\ZZ/4\ZZ$ &$\ZZ/2\ZZ\oplus\ZZ/8\ZZ$ \\\hline
\end{tabular}
\caption{Elliptic curves of minimal conductor with torsion growth over $\Q_{\infty,2}$.}
\label{Q2examples}
\end{center}
\end{table}

\begin{table}
\begin{center}
\begin{tabular}{|r|c|c|}\hline
Cremona Reference & $E(\Q)_\tors$ & $E(\Q_{\infty,3})_\tors$ \\\hline
\href{http://www.lmfdb.org/EllipticCurve/Q/162b2}{\texttt{162b2}} & $\{ \mathcal{O}\}$ & $\ZZ/7\ZZ$ \\\hline
\href{http://www.lmfdb.org/EllipticCurve/Q/324a2}{\texttt{324a2}} & $\{ \mathcal{O}\}$ & $\ZZ/2\ZZ \oplus \ZZ/2\ZZ$ \\\hline
\href{http://www.lmfdb.org/EllipticCurve/Q/27a3}{\texttt{27a3}} & $\ZZ/3\ZZ$ & $\ZZ/9\ZZ$ \\\hline
\href{http://www.lmfdb.org/EllipticCurve/Q/162b1}{\texttt{162b1}} & $\ZZ/3\ZZ$ & $\ZZ/21\ZZ$ \\\hline
\href{http://www.lmfdb.org/EllipticCurve/Q/27a4}{\texttt{27a4}} & $\ZZ/3\ZZ$ & $\ZZ/27\ZZ$ \\\hline
\href{http://www.lmfdb.org/EllipticCurve/Q/324a1}{\texttt{324a1}} & $\ZZ/3\ZZ$ & $\ZZ/2\ZZ \oplus \ZZ/6\ZZ$ \\\hline
\end{tabular}
\caption{Elliptic curves of minimal conductor with torsion growth over $\Q_{\infty,3}$.}
\label{Q3examples}
\end{center}
\end{table}

\clearpage

\end{document}